\documentclass{article}

\usepackage[verbose]{geometry}
\usepackage[T1]{fontenc}

\usepackage{authblk}

\usepackage{amsmath, amssymb, amsthm}
\usepackage{mathtools}
\usepackage{mathrsfs}
\usepackage{abstract}
\newcommand{\Cov}{\operatorname{Cov}}
\newcommand{\Tr}{\operatorname{Tr}}
\newcommand{\Lip}{\operatorname{Lip}}

\DeclareMathOperator*{\argmin}{arg\,min}
 
\usepackage{hyperref}
\usepackage[dvipsnames]{xcolor}
\hypersetup{
colorlinks=true,
allcolors=MidnightBlue,
linktoc=page
}

\usepackage{natbib}

\usepackage{graphicx}

\usepackage[ruled,vlined]{algorithm2e}

\theoremstyle{plain}
\newtheorem{thm}{Theorem}[section]      %
\newtheorem{lem}[thm]{Lemma}            %
\newtheorem{cor}[thm]{Corollary}
\newtheorem{prop}[thm]{Proposition}

\theoremstyle{definition}
\newtheorem{defn}[thm]{Definition}
\newtheorem{assum}[thm]{Assumption}

\theoremstyle{remark}
\newtheorem{rem}[thm]{Remark}

\DeclareMathOperator{\Law}{Law}
\DeclareMathOperator{\Var}{Var}

\newcommand{\cw}{\mathcal{W}}

\newcommand{\keywords}[1]{%
  \par\noindent\textbf{Keywords: }#1\par
}

\usepackage{changes}
\usepackage{CJKutf8}

\title{Self-fictitious-play for Potential Monotone Ergodic Mean-field Games}

\date{}

\author[1]{Yupeng Bai}
\author[2]{Mathieu Laurière}
\author[1]{Zhenjie Ren}
\author[3]{Songbo Wang}

\affil[1]{LaMME, Université Evry Paris-Saclay, France}
\affil[2]{NYU Shanghai, China}
\affil[3]{Laboratoire J.A.Dieudonné,
Université Côte d'Azur, France}

\begin{document}

\maketitle

\begin{abstract}
	We investigate long-time learning in ergodic, potential, monotone mean-field games (MFGs) via a self-fictitious-play (SFP) dynamics coupling an optimally controlled diffusion with a slowly evolving belief. At each time, the state follows the optimal feedback associated with the current belief, while the belief is updated using the player’s own empirical occupation measure rather than the population distribution. For ergodic monotone potential  MFGs on the torus, we prove that the SFP dynamics is contractive and admits a unique invariant law. Moreover, we show that this invariant law is quantitatively close to the MFG Nash equilibrium, with an error of order equal to the square root of the belief-update rate.
	The proof combines uniform-in-time regularity estimates for the ergodic Hamilton–Jacobi–Bellman equation with an energy argument based on the Lasry–Lions divergence. The linear-quadratic example shows that this rate is sharp, and the numerical experiments illustrate the predicted scaling.
\end{abstract}

\keywords{Self-interaction, Fictitious Play, Ergodic Mean Field Games, Lasry-Lions divergence, Coupling of diffusion.}
\medskip

\section{Introduction}

Mean field games (MFGs) provide a tractable description of Nash equilibria in stochastic differential games with a very large number of weakly interacting and statistically indistinguishable agents. Initiated independently by Lasry--Lions~\cite{LasryLions2006I,LasryLions2006II,LasryLions2007} and by Huang--Malham\'e--Caines~\cite{HuangMalhameCaines2006}, the theory identifies equilibria through a consistency condition: given a population distribution, a representative agent solves an optimal control problem; the induced distribution of optimally controlled states must coincide with the original one. In the diffusion setting, this fixed-point structure is typically encoded by a coupled Hamilton--Jacobi--Bellman (HJB) and Fokker--Planck (FP) system. Central themes in the literature are to understand existence, uniqueness, and stability of equilibria, as well as to design algorithms that approximate them.

This paper proposes a probabilistic approximation scheme for the Nash equilibrium of \emph{potential, monotone, ergodic} MFGs, based on a self-interacting diffusion. Our approach can be viewed as a continuous-time learning dynamics: the agent repeatedly best-responds to an evolving belief on the population, while the belief itself is updated through an occupation measure of her own single trajectory. We prove quantitative asymptotic guarantees for the resulting dynamics and derive an explicit small-parameter error bound in Wasserstein distance.

\medskip
\noindent\textbf{From finite-horizon control to the ergodic HJB equation.}
We begin with the finite-horizon stochastic control problem on $E$ (with $E$ being the torus $\mathbb{T}^d$ or the whole space $\mathbb{R}^d$) driven by a standard Brownian motion $W_t$:
\[
	u^{T}(0,x)
	= \inf_{\alpha}
	\mathbb{E}\left[
		\int_{0}^{T} c(X_t,\alpha_t)\,dt + g(X_T)
		\right],
	\qquad
	X_0 = x,\qquad dX_t = \alpha_t\,dt + \sqrt{2}\, dW_t .
\]
The control $\{\alpha_t\}_{0\le t\le T}$ is progressively measurable and aims to minimize the running cost $\int_0^T c(X_t,\alpha_t)\,dt$ plus the terminal cost $g(X_T)$. The associated value function is
\[
	u^T(t,x)
	=
	\inf_{\alpha}
	\mathbb{E}\left[
		\left.
		\int_{t}^{T} c(X_s,\alpha_s)\,ds + g(X_T)
		\right| X_t=x
		\right].
\]
By the dynamic programming principle, $u^T$ solves the HJB equation (see, e.g.,~\cite{Peng1992SHJB,Bellman1958})
\[
	\partial_t u^T(t,x) + \Delta u^T(t,x) + H\bigl(x,\nabla u^T(t,x)\bigr) = 0,
	\qquad
	u^T(T,x)=g(x),
\]
where the Hamiltonian is defined by
\[
	H(x,p) = \inf_{\alpha} \bigl( \alpha\cdot p + c(x,\alpha) \bigr).
\]

In many applications the relevant performance criterion is the long-run average cost. This leads to the ergodic control problem, which studies the behavior of optimal costs as $T\to\infty$. Formally, define (when the limit exists)
\[
	\bar \delta
	:=
	\lim_{T\to\infty}
	\frac{1}{T}
	\inf_{\alpha}
	\mathbb{E}\left[
		\int_0^T c(X_t,\alpha_t)\,dt + g(X_T)
		\right],
\]
so that $\bar\delta$ represents the asymptotic average cost rate. For large horizons, optimal trajectories often exhibit a turnpike behavior: after an initial transient, the controlled dynamics remain close to a steady regime for most of the time interval before a final adjustment near $T$; see~\cite{McKenzie1976Turnpike,GeshkovskiZuazua2022}. Under suitable assumptions, this implies
\begin{align*}
	\frac{u^T(0,x)}{T}       & \longrightarrow \bar \delta,
	\qquad \text{as } T\to\infty,                           \\
	u^T(0,x) - \bar \delta T & \longrightarrow u(x),
	\qquad \text{as } T\to\infty,
\end{align*}
where $u$ is the stationary corrector. The ergodic limit is then characterized by an ergodic HJB equation coupled with the invariant measure $\bar m^*$ of the optimally controlled dynamics:
\[
	\begin{cases}
		-\bar \delta + \Delta u(x) + H\bigl(x,\nabla u(x)\bigr) = 0, \\
		\Delta \bar m^*(x) - \nabla\cdot\bigl(\bar \alpha^*(x)\,\bar m^*(x)\bigr) = 0 \quad\text{in }E.
	\end{cases}
\]

\medskip
\noindent\textbf{Ergodic MFGs.}
An ergodic MFG describes the stationary behavior of a large population of agents whose running costs depend on the population distribution. Given a distribution $m$, let $f(m,x)$ denote the interaction cost faced by an agent at state $x$. The representative player considers the ergodic control problem
\[
	\delta
	=
	\lim_{T\to\infty}
	\frac{1}{T}
	\inf_{\alpha}
	\mathbb{E}\left[
		\int_0^T
		\bigl(
		c(X_t,\alpha_t) + f(m, X_t)
		\bigr)\,dt
		+ g(X_T)
		\right].
\]
Since $f$ does not depend on $\alpha$, the optimal feedback again derives from the Hamiltonian, and the controlled diffusion takes the form
\[
	dX_t =  H_p\bigl(X_t,\,\nabla u(X_t)\bigr)\,dt + \sqrt{2}\,dW_t,
\]
where $u$ solves the ergodic HJB equation associated with $m$. Assuming this diffusion admits a unique invariant measure, denoted $\widehat m$, the map
\[
	m\ \longmapsto\ \widehat m
\]
is the best-response operator of the game. An ergodic Nash equilibrium is a fixed point $m^*=\widehat m$: given $m^*$, each agent plays an optimal ergodic control, and the induced stationary distribution is again $m^*$. At the PDE level, $(u^*,m^*,\delta^*)$ solve the ergodic HJB--FP system
\begin{equation}\label{eq-HJB-FP-intro}
	\begin{cases}
		-\delta^* + \Delta u^*(x) + H(x, \nabla u^*(x)) + f(m^*,x) = 0, \\
		\Delta m^*  - \nabla \cdot \,\left( m^* \, H_p\big(x, \nabla u^*(x)\big) \right) = 0.
	\end{cases}
\end{equation}

The long-time behavior of finite-horizon MFG systems and their convergence to the ergodic regime have been studied extensively. Under the Lasry--Lions monotonicity condition, exponential turnpike results for solutions of the ergodic HJB--FP system were established in~\cite{CardaliaguetLasryLionsPorretta2012,CardaliaguetDelarueLasryLions2019}. More recently,~\cite{CecchinConfortiDurmusEichinger2024} proves a turnpike property under a ``weak interaction/strong ergodicity'' regime. We will use estimates from~\cite{CecchinConfortiDurmusEichinger2024} to obtain the regularity bounds needed for our analysis of the ergodic HJB--FP system.

\medskip
\noindent\textbf{Potential games, mean field control, and a variational perspective.}
An ergodic MFG is called \emph{potential} if the interaction term $f(m,x)$ is the linear functional derivative (see Definition~\ref{def:linear derivative}) of a potential functional $F$, namely
\[
	f(m,x)=\frac{\delta F}{\delta m}(m,x).
\]
In this case, under mild assumptions, the equilibrium control $\alpha^*$ also solves the corresponding mean field control (MFC) problem
\[
	\lim_{T\to\infty}\frac{1}{T}\inf\left\{
	\mathbb{E}\!\left[\int_0^T\Bigl(c(X_t,\alpha_t)+F(m_t)\Bigr)\,dt\right]
	:\ dX_t=\alpha_t\,dt+\sqrt{2}\,dW_t,\ \ m_t=\mathrm{Law}(X_t)
	\right\}.
\]
A rigorous probabilistic formulation of MFC was developed by Carmona and Delarue~\cite{CarmonaDelarue2015}, and the relation between MFG and MFC is discussed for instance in~\cite{CarmonaDelarueLachapelle2013}. In the special case of quadratic running cost $c(x,\alpha)=\tfrac12|\alpha|^2$, one has $\alpha^*=-\nabla u^*$ and the equilibrium density satisfies $m^*\propto e^{-u^*}$, hence $\alpha^*=\nabla\log m^*$. This identifies $m^*$ as a natural candidate for the static variational problem
\[
	\inf_m \left\{\int \frac12\,|\nabla \log m(x)|^2\, m(dx) + F(m)\right\},
\]
which can be interpreted as mean-field optimization regularized by the Fisher information, as studied in~\cite{ClaisseConfortiRenWang2023}.

\medskip
\noindent\textbf{Dynamic approximation of ergodic equilibria: from McKean--Vlasov learning to self-interaction.}
To approximate the ergodic Nash equilibrium, we study a continuous-time learning dynamics. Before introducing it, we recall the approach of Mouzouni~\cite{Mouzouni2020QuasiStationaryMFG}, who proposed an ergodic analogue of fictitious play based on a McKean--Vlasov diffusion. Let $(m_t)_{t\ge 0}$ be the population flow and consider
\[
	dX_t
	=
	H_p\left( X_t,\, \nabla u^{m_t}(X_t) \right) dt
	+ \sqrt{2}\, dW_t,
	\qquad
	m_t = \mathcal{L}(X_t),
\]
where $u^m$ solves, for fixed $m$,
\[
	-\delta^m
	+ \Delta u^m(x)
	+ H\bigl(x, \nabla u^m(x)\bigr)
	+ f(m,x)
	= 0.
\]
This evolution couples a best response to the current belief $m_t$ with the consistency update $m_t=\Law(X_t)$. Under a smallness condition on the initial discrepancy (in $L^2$) between $m_0$ and the equilibrium,~\cite{Mouzouni2020QuasiStationaryMFG} proves convergence $m_t\to m^*$ as $t\to\infty$. This construction is closely related in spirit to fictitious play schemes for finite-horizon MFGs, studied for instance in~\cite{CardaliaguetHadikhanloo2017,PerrinFPMFG}.

A different line of work shows that replacing the interaction through the current marginal $\Law(X_t)$ by an interaction through an \emph{occupation measure} can be both natural and effective in mean-field sampling and optimization. The long-time behavior of self-interacting diffusions was developed in foundational works such as~\cite{CranstonLeJan1995,Raimond1997} and in the series of papers by Bena\"im and coauthors~\cite{BenaimLedouxRaimond2002,BenaimRaimond2003,BenaimRaimond2005,BenaimRaimond2011}. Recently, a quantitative analysis of this idea was proposed in~\cite{DuRenSuciuWang2024} for approximating the invariant measure of the mean-field Langevin diffusion on $\mathbb{R}^d$,
\[
	dX_t = - D_m F({\rm Law}(X_t), X_t)\, dt + \sqrt{2}\, dW_t,
	\qquad m_t = \mathcal{L}(X_t),
\]
whose unique invariant measure $m_{+\infty}$ exists under convexity assumptions on $F$ (see~\cite{HRSS}). In~\cite{DuRenSuciuWang2024}, the authors consider instead the self-interacting dynamics
\begin{align*}
	dX_t & = - D_m F(m_t, X_t)\, dt + \sqrt{2}\, dW_t,      \\
	dm_t & = \lambda \bigl( \delta_{X_t} - m_t \bigr)\, dt,
\end{align*}
and prove quantitative bounds comparing the induced invariant measure $\rho^*$ with $m_\infty$, with error of order $O(\sqrt{\lambda})$ in Wasserstein distance.

\medskip
\noindent\textbf{Our contribution: Self-Fictitious-Play for ergodic MFGs.}
Motivated by fictitious play for MFGs and by quantitative results for self-interacting diffusions, we propose a self-interacting approximation scheme for ergodic MFG equilibria that we call \emph{Self-Fictitious-Play} (SFP). The state process follows
\begin{equation}\label{eq:X intro}
	dX_t
	=
	H_p\left( X_t,\, \nabla u^{m_t}(X_t) \right)\, dt
	+ \sqrt{2}\, dW_t,
\end{equation}
but, in contrast with the McKean--Vlasov update $m_t=\Law(X_t)$, we update the belief through the continuous-time SFP rule
\[
	dm_t = \lambda \bigl( \delta_{X_t} - m_t \bigr)\, dt.
\]
Thus $m_t$ is a weighted empirical measure of the trajectory, and the pair $(X_t,m_t)$ forms a self-interacting diffusion. Our main results show that, for monotone potential ergodic MFGs on $\mathbb{T}^d$, the distance (in Wasserstein sense) between the Nash equilibrium $m^*$ and the marginal distribution induced by SFP converges, as $t\to\infty$, to a neighborhood of size $O(\sqrt\lambda)$. Analogous estimates can be obtained on $\mathbb{R}^d$ under suitable conditions; however, to keep a uniform and streamlined presentation we focus on the torus throughout the paper. The linear-quadratic example in Section~\ref{subsection-ex1} shows that the order $O(\sqrt{\lambda})$ is sharp in general. 

A key difficulty is that $u^m$ is defined through an ergodic PDE and the drift in~\eqref{eq:X intro} is generally not of gradient type. Consequently, entropy-based arguments developed for gradient systems (such as in~\cite{DuRenSuciuWang2024}) are not  applicable. Instead, we combine reflection coupling with the Lasry--Lions divergence (see Section~\ref{subsection-pre3}) to derive Wasserstein contractivity and to obtain quantitative error bounds.

Related trajectory-based mean-field learning ideas also appear in model-free reinforcement learning: finite-state Q-learning schemes have been proposed in which the population distribution is estimated from the trajectory of a representative player while the value function is updated on a separate timescale~\cite{angiuli2022unified}, with convergence of related multi-timescale algorithms studied in~\cite{angiuli2023convergence}.

\medskip
\noindent\textbf{Organization of the paper.}
Section~\ref{section-preliminary} introduces the main definitions and the standing assumptions. Section~\ref{section-main-result} contains our main results: Subsection~\ref{subsection-contractivity} proves convergence of the Self-Fictitious-Play dynamics under suitable regularity conditions, while Subsection~\ref{subsection-distance} quantifies the distance between the stationary distribution of SFP and the ergodic MFG Nash equilibrium, showing that this distance is of order $O(\sqrt{\lambda})$. Section~\ref{section-examples} presents illustrative examples: Subsection~\ref{subsection-ex1} studies a linear-quadratic case suggesting the size $O(\sqrt{\lambda})$ is sharp, and Subsection~\ref{subsection-ex2} provides numerical illustrations in the LQ case and in another model with explicit solution. Proofs are collected in Section~\ref{section-proof}.

\section{Preliminaries}\label{section-preliminary}

\subsection{Ergodic monotone potential MFGs}\label{subsec:pre:mfg}
Let $(\mathbb{T}^d,\mathcal{F},(\mathcal{F}_t)_{t\ge0},\mathbb{P})$ be a filtered probability space satisfying the usual conditions, and let $(W_t)_{t\ge0}$ be a standard $d$-dimensional Brownian motion. We fix a running cost
\[
	c:\mathbb{T}^d\times\mathbb{R}^d\to\mathbb{R}
	\qquad\text{and}\qquad
	f:\mathcal{P}(\mathbb{T}^d)\times\mathbb{T}^d\to\mathbb{R},
\]
where $f(m,x)$ models the interaction between an individual at state $x$ and the population distribution $m$.

\medskip
\noindent\textbf{Ergodic control with frozen population.}
For a fixed $m\in\mathcal{P}(\mathbb{T}^d)$, consider the controlled diffusion
\[
	dX_t=\alpha_t\,dt+\sqrt{2}\,dW_t,
\]
and define the corresponding ergodic cost (when it exists) by
\begin{equation}\label{eq:def-delta-m}
	\delta^m
	:=
	\lim_{T\to\infty}\frac1T
	\inf_{\alpha}
	\mathbb{E}\Bigl[\int_0^T\bigl(c(X_t,\alpha_t)+f(m,X_t)\bigr)\,dt\Bigr].
\end{equation}
The Hamiltonian is
\[
	H(x,p)=\inf_{\alpha\in\mathbb{R}^d}\bigl(\alpha\cdot p + c(x,\alpha)\bigr),
	\qquad
	H_p(x,p)=\partial_p H(x,p),
\]
and the (formal) optimal feedback is $\alpha^*(x)=H_p(x,\nabla u(x))$ where $u$ solves the associated ergodic HJB equation.

\medskip
\noindent\textbf{Ergodic MFG equilibrium.}
An ergodic mean field Nash equilibrium is a triple $(\delta^*,u^*,m^*)$ such that $m^*$ is an invariant measure of the optimally controlled dynamics corresponding to the population $m^*$ itself. At the PDE level, this is encoded by the stationary HJB--FP system
\begin{equation}\label{eq-MFG}
	\begin{cases}
		-\delta^* + \Delta u^*(x) + H(x,\nabla u^*(x)) + f(m^*,x) = 0,
		 & x\in\mathbb{T}^d, \\[2mm]
		\Delta m^*(x) - \nabla\!\cdot\!\Bigl(m^*(x)\,H_p\bigl(x,\nabla u^*(x)\bigr)\Bigr)=0,
		 & x\in\mathbb{T}^d, \\[1mm]
		\int_{\mathbb{T}^d} m^*(dx)=1.
	\end{cases}
\end{equation}

\medskip
\noindent\textbf{Potential structure and monotonicity.}
We recall the notion of (linear) functional derivative.

\begin{defn}[Linear derivative]\label{def:linear derivative}
	Let $F:\mathcal{P}(\mathbb{T}^d)\to\mathbb{R}$. A function
	\[
		\frac{\delta F}{\delta m}:\mathcal{P}(\mathbb{T}^d)\times\mathbb{T}^d\to\mathbb{R}
	\]
	is called a \emph{linear derivative} of $F$ if for all $m,m'\in\mathcal{P}(\mathbb{T}^d)$,
	\[
		F(m')-F(m)
		=\int_0^1\int_{\mathbb{T}^d}
		\frac{\delta F}{\delta m}(m^s,x)\,(m'-m)(dx)\,ds,
		\qquad m^s:=(1-s)m+s m'.
	\]
\end{defn}

\begin{defn}[Monotone potential ergodic MFG]\label{def:mono-pot}
	We consider ergodic MFGs satisfying:
	\begin{enumerate}
		\item \textbf{Potentiality:} there exists a functional $F$ such that
		      \[
			      f(m,x)=\frac{\delta F}{\delta m}(m,x).
		      \]
		\item \textbf{Lasry--Lions monotonicity:} for all $m,m'\in\mathcal{P}(\mathbb{T}^d)$,
		      \[
			      \int_{\mathbb{T}^d}\bigl(f(m,x)-f(m',x)\bigr)\,(m-m')(dx)\ge0.
		      \]
	\end{enumerate}
	In the potential case, monotonicity is equivalent to convexity of $F$ along affine interpolations (often called \emph{flat convexity}).
\end{defn}

Throughout the paper we focus on monotone potential ergodic MFGs on $\mathbb{T}^d$.

\subsection{Cylindrical potentials}\label{subsec:pre:cyl}
We specialize to cylindrical potentials, which allow us to reduce the self-interacting system to finite dimension.

\begin{assum}[Regularity of the potential]\label{ass-potential}
	Assume that $F$ is of the form
	\[
		F(m)=\Phi(\langle \ell,m\rangle)
		:=\Phi\Bigl(\int_{\mathbb{T}^d}\ell(x)\,m(dx)\Bigr),
	\]
	where $\ell=(\ell_1,\dots,\ell_D)\in C^2(\mathbb{T}^d;\mathbb{R}^D)$ and $\Phi\in C^2(\mathbb{R}^D;\mathbb{R})$ is convex. Then
	\[
		f(m,x)=\frac{\delta F}{\delta m}(m,x)
		=\nabla\Phi(\langle \ell,m\rangle)\cdot \ell(x).
	\]
	We quantify regularity by constants $C_\ell,C_\Phi>0$ such that:
	\begin{enumerate}
		\item[(i)] for all $x\in\mathbb{T}^d$,
			\[
				\|\ell(x)\| + \|\nabla\ell(x)\| + \|\nabla^2\ell(x)\|\le C_\ell;
			\]
		\item[(ii)] for all $\|y\|\le C_\ell$,
			\[
				\|\nabla\Phi(y)\| + \|\nabla^2\Phi(y)\|\le C_\Phi .
			\]
	\end{enumerate}
\end{assum}

\subsection{Self-fictitious play and finite-dimensional reduction}\label{subsec:pre:sfp}
Fix $\lambda>0$. The \emph{self-fictitious play} (SFP) dynamics are defined by
\begin{equation}\label{eq-SFP}
	\begin{cases}
		dX_t = H_p\bigl(X_t,\nabla u^{m_t}(X_t)\bigr)\,dt + \sqrt{2}\,dW_t, \\
		dm_t = \lambda\bigl(\delta_{X_t}-m_t\bigr)\,dt,
	\end{cases}
\end{equation}
where, for each $m\in\mathcal{P}(\mathbb{T}^d)$, the function $u^m$ solves the ergodic HJB equation
\[
	-\delta^m + \Delta u^m(x) + H\bigl(x,\nabla u^m(x)\bigr) + f(m,x)=0.
\]

Under Assumption~\ref{ass-potential}, the interaction depends on $m$ only through the moment
\[
	y:=\langle \ell,m\rangle\in\mathbb{R}^D.
\]
Define
\[
	V(x,y):=\nabla\Phi(y)\cdot\ell(x),
	\qquad\text{so that}\qquad
	f(m,x)=V\bigl(x,\langle \ell,m\rangle\bigr).
\]
Setting $Y_t:=\langle \ell,m_t\rangle$, the second equation in~\eqref{eq-SFP} yields the closed finite-dimensional dynamics
\[
	dY_t=\lambda\bigl(\ell(X_t)-Y_t\bigr)\,dt.
\]
Consequently, the SFP system can be equivalently written on $\mathbb{T}^d\times\mathbb{R}^D$ as
\begin{equation}\label{eq-SFP-XY}
	\begin{cases}
		dX_t = H_p\bigl(X_t,\nabla u^{Y_t}(X_t)\bigr)\,dt + \sqrt{2}\,dW_t, \\
		dY_t = \lambda\bigl(\ell(X_t)-Y_t\bigr)\,dt,
	\end{cases}
\end{equation}
where, by abuse of notation, $u^y$ denotes the solution of
\begin{equation}\label{eq-HJB-xy}
	-\delta^y + \Delta u^y(x) + H\bigl(x,\nabla u^y(x)\bigr) + V(x,y)=0.
\end{equation}
As usual in ergodic HJB theory, $u^y$ is understood up to an additive constant; we fix a normalization throughout the paper.

\subsection{Lasry--Lions divergence}\label{subsection-pre3}
A key auxiliary quantity in our analysis is a quantity introduced by Lasry and Lions~\cite{LasryLions2007} to establish uniqueness and stability in monotone MFGs; we will refer to it as the Lasry--Lions divergence. Here we use it to quantify the discrepancy between the equilibrium feedback and the feedback induced by the $Y$-component of SFP.

\begin{defn}[Lasry--Lions divergence]\label{def:LL}
	Let $u,u'\in C^2(\mathbb{T}^d)$ and let $m$ (resp.\ $m'$) be the invariant measure of the diffusion
	\[
		dX_t = H_p\bigl(X_t,\nabla u(X_t)\bigr)\,dt + \sqrt{2}\,dW_t
		\qquad
		(\text{resp. }u').
	\]
	The \emph{Lasry--Lions divergence} between $\nabla u$ and $\nabla u'$ is
	\[
		\begin{aligned}
			D_{\mathrm{LL}}(\nabla u,\nabla u')
			 & :=\int_{\mathbb{T}^d}\Bigl(
			H\bigl(x,\nabla u(x)\bigr)-H\bigl(x,\nabla u'(x)\bigr)
			- H_p\bigl(x,\nabla u(x)\bigr)\cdot\nabla\bigl(u-u'\bigr)(x)
			\Bigr)\,m(dx)                      \\
			 & \quad+\int_{\mathbb{T}^d}\Bigl(
			H\bigl(x,\nabla u'(x)\bigr)-H\bigl(x,\nabla u(x)\bigr)
			- H_p\bigl(x,\nabla u'(x)\bigr)\cdot\nabla\bigl(u'-u\bigr)(x)
			\Bigr)\,m'(dx).
		\end{aligned}
	\]
\end{defn}

\begin{rem}
	Under standard convexity assumptions on $p\mapsto H(x,p)$, one has $D_{\mathrm{LL}}(\nabla u,\nabla u')\ge0$, and $D_{\mathrm{LL}}(\nabla u,\nabla u')=0$ typically forces the corresponding feedbacks to coincide $m$-a.e. This is the sense in which $D_{\mathrm{LL}}$ behaves like a Bregman divergence adapted to the Hamiltonian structure.
\end{rem}

\section{Main results}\label{section-main-result}

\subsection{Contractivity and existence of an invariant measure}\label{subsection-contractivity}
We next state conditions on the running cost $c$ ensuring sufficient regularity of the ergodic correctors $u^y$, and uniform Lipschitz estimates for the drift in~\eqref{eq-SFP-XY}.

\begin{assum}[Regularity and convexity of $c$]\label{ass-cost}
	Assume that $c\in C^2(\mathbb{T}^d\times\mathbb{R}^d)$ and that:
	\begin{enumerate}
		\item[\textup{(i)}] \textbf{Uniform Lipschitz in $x$:}
			\[
				\sup_{\alpha\in\mathbb{R}^d}\|c(\cdot,\alpha)\|_{\mathrm{Lip}}\le \kappa_1^c.
			\]
		\item[\textup{(ii)}] \textbf{Uniform strong convexity in $\alpha$:} there exists $\kappa_2^c>0$ such that for all $(x,\alpha)$,
			\[
				\frac{1}{\kappa_2^c} I \le c_{\alpha\alpha}(x,\alpha)\le \kappa_2^c I .
			\]
		\item[\textup{(iii)}] \textbf{Controlled growth of mixed derivatives:} there exist $\theta_c\in(0,1)$ and $\kappa_3^c>0$ such that for all $(x,\alpha)$,
			\[
				|c_{x\alpha}(x,\alpha)| \le \kappa_3^c(1+|\alpha|)^{\theta_c},
				\qquad
				|c_{xx}(x,\alpha)| \le \kappa_3^c(1+|\alpha|)^{1+\theta_c}.
			\]
	\end{enumerate}
\end{assum}

Under Assumption \ref{ass-potential} and \ref{ass-cost} on   $f$  and $c$, system~\eqref{eq-MFG} admits a unique classical solution $(\delta^*,u^*,m^*)$; see, e.g.,~\cite{LasryLions2007,GomesMitakeVoskanyan2016,Cardaliaguet2013}. We will not revisit the existence/uniqueness theory here.

\begin{rem}\label{rem:cost}
	\begin{enumerate}
		\item Since $\mathbb{T}^d$ is compact and $c(\cdot,\alpha)$ is Lipschitz uniformly in $\alpha$, $c$ is bounded in $x$ uniformly in $\alpha$. We denote by $\kappa_0^c$ a constant such that
		      \[
			      \sup_{x\in\mathbb{T}^d,\ \alpha\in\mathbb{R}^d}|c(x,\alpha)|\le \kappa_0^c.
		      \]
		\item Assumption~\ref{ass-cost}\textup{(iii)} is technical and is only used to obtain Lipschitz bounds on $\nabla u^T$ uniformly in the horizon (see Lemma~\ref{lem-gradiantu-bound}). Any alternative argument yielding the same uniform estimates would allow one to remove this condition.
	\end{enumerate}
\end{rem}

\medskip
\noindent\textbf{Uniform drift bounds.}
The next lemma records the uniform boundedness and Lipschitz continuity of the SFP drift, which is the input for our coupling estimates.

\begin{lem}[Uniform bounds for the drift]\label{lem-regularity}
	Let Assumptions~\ref{ass-potential} and~\ref{ass-cost} hold. For each $y\in\mathbb{R}^D$, let $u^y$ solve~\eqref{eq-HJB-xy}. Then there exist constants $C_b,C_L>0$ such that
	\begin{align*}
		\sup_{x\in\mathbb{T}^d,\ y\in \ell(\mathbb{T}^d)}\bigl\|H_p\bigl(x,\nabla u^y(x)\bigr)\bigr\| & \le C_b,                              \\
		\bigl\|H_p\bigl(x,\nabla u^y(x)\bigr)-H_p\bigl(x',\nabla u^{y'}(x')\bigr)\bigr\|
		                                                                                              & \le C_L\bigl(\|x-x'\|+\|y-y'\|\bigr),
	\end{align*}
	where $C_b$ and $C_L$ depend  on $(d,\kappa_1^c,\kappa_2^c,\kappa_3^c,\theta_c,C_\ell,C_\Phi)$.
	In particular, in~\eqref{eq-SFP-XY} the $X$-drift is uniformly bounded and Lipschitz, with constants independent of the realized value of $Y_t$.
\end{lem}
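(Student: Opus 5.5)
The plan is to reduce both claims to uniform $C^1$ and $C^2$ bounds on the correctors $u^y$, together with a Lipschitz estimate for $y\mapsto\nabla u^y$. First we record properties of the Hamiltonian. By Assumption~\ref{ass-cost}(ii), $H(x,\cdot)$ is the Legendre-type transform of a uniformly convex function, so $H_p(x,p)=\alpha^*(x,p)$ is the unique minimizer. Moreover $H_{pp}=-c_{\alpha\alpha}^{-1}$, hence $\|H_{pp}\|\le\kappa_2^c$. Differentiating the first-order condition $p+c_\alpha(x,\alpha^*)=0$ in $x$ gives
\[
H_{px}=-c_{\alpha\alpha}^{-1}c_{\alpha x},
\]
which is bounded by $\kappa_2^c\kappa_3^c(1+|\alpha^*|)^{\theta_c}$. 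The problem therefore reduces to three facts, each uniform over $y\in\ell(\mathbb{T}^d)$ (on which $\|y\|\le C_\ell$, so that $\|V(\cdot,y)\|_{C^2}\le C_\Phi C_\ell$):
\begin{enumerate}
\item[(a)] $\sup_x|\nabla u^y|\le K_1$;
\item[(b)] $\sup_x|\nabla^2 u^y|\le K_2$;
\item[(c)] $\sup_x|\nabla u^y-\nabla u^{y'}|\le K_3\|y-y'\|$.
\end{enumerate}
Given these, $\|H_p(x,\nabla u^y)\|\le |H_p(x,0)|+\kappa_2^cK_1$. Here $|H_p(x,0)|$ is bounded because $\alpha^*(x,0)$ minimizes $c(x,\cdot)$, and $c$ is bounded by $\kappa_0^c$ while strongly convex, which forces the minimizer to stay in a bounded set. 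For the Lipschitz estimate we split
\[
H_p(x,\nabla u^y(x))-H_p(x',\nabla u^{y'}(x'))
\]
through the intermediate point $H_p(x',\nabla u^y(x'))$. The $x$-increment is controlled by $\|H_{px}\|+\kappa_2^cK_2$, and the $y$-increment by $\kappa_2^cK_3$.

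For (a) and (b) we would use the vanishing-horizon/discount route that the paper alludes to in Lemma~\ref{lem-gradiantu-bound}. We obtain $u^y$ as the limit of $u^T(0,\cdot)-u^T(0,x_0)$ for the finite-horizon HJB with running cost $c+V(\cdot,y)$, and prove gradient and Hessian bounds on $u^T$ that do not depend on $T$. The gradient bound is essentially a control argument: $c$ and $V$ are Lipschitz in $x$ uniformly in $\alpha$, with constant $\kappa_1^c+C_\Phi C_\ell$, and one compares costs from nearby starting points using reflection coupling on the torus. Under optimal controls, the synchronous or reflection coupling of the two trajectories meets in time of order one (uniformly, by compactness), so $|u^T(t,x)-u^T(t,x')|\le C|x-x'|$ uniformly in $T$. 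The Hessian bound is the step I expect to be the main obstacle. Here we differentiate the HJB twice (Bernstein method) or use the stochastic representation of $\nabla u^T$ via the first variation process. The terms $c_{xx}$ and $c_{x\alpha}$ have growth $(1+|\alpha|)^{1+\theta_c}$ and $(1+|\alpha|)^{\theta_c}$ with $\theta_c<1$. Since $|\alpha^*|$ is already bounded by step (a), these terms are bounded, and the sublinear exponent keeps the argument closed even before (a) is fully quantitative. Alternatively, once $\nabla u^y$ is bounded, the ergodic equation reads $\Delta u^y=\delta^y-H(x,\nabla u^y)-V$ with bounded right-hand side, and Schauder/$W^{2,p}$ bootstrapping on $\mathbb{T}^d$ gives $C^{2}$ (indeed $C^{2,\beta}$) bounds depending only on the listed constants. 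This is the route I would try first for (b), since it avoids Assumption~\ref{ass-cost}(iii) except through $H_x$.

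For (c) we set $w=u^y-u^{y'}$. Subtracting the two ergodic equations, $w$ solves the linear equation
\[
\Delta w+b\cdot\nabla w=(\delta^y-\delta^{y'})-\bigl(V(x,y)-V(x,y')\bigr),
\]
with drift
\[
b=\int_0^1H_p\bigl(x,s\nabla u^y+(1-s)\nabla u^{y'}\bigr)\,ds,
\]
which is bounded and Lipschitz by (a)--(b). The right-hand side has $C^1$ norm at most $C\|y-y'\|$ by Assumption~\ref{ass-potential}, and the constant $\delta^y-\delta^{y'}$ is fixed by the solvability condition against the invariant measure of $b$. Uniform ergodicity of this bounded-drift diffusion on the compact torus, with exponential mixing given by a Doeblin/reflection-coupling estimate, together with elliptic gradient estimates, then yields $\|\nabla w\|_\infty\le K_3\|y-y'\|$.
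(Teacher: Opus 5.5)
Your proof is correct in outline and has the same outer shell as the paper's. Both reduce to uniform bounds on $\nabla u^y$, on $\nabla^2 u^y$, and on the $y$-Lipschitz constant of $\nabla u^y$, then combine these with $\|H_{pp}\|\le\kappa_2^c$ and $|H_{px}|\le\kappa_2^c\kappa_3^c(1+|\alpha^*|)^{\theta_c}$. The three core estimates, however, are obtained differently.

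\textbf{The paper's route.} It works throughout with the finite-horizon value function $u^{T,y}$.
\begin{itemize}
\item The gradient bound is a Bernstein/semiconcavity argument (Lemma~\ref{lem-gradiantu-bound}). This is exactly where Assumption~\ref{ass-cost}(iii) enters.
\item The Hessian bound and the $y$-Lipschitz bound both come from Feynman--Kac representations of $\partial_{x_i}u^{T,y}$ and $\nabla_y u^{T,y}$ along the optimal flow. Two starting points are compared through the reflection-coupling estimate $\mathbb{P}(t<T_0)\le q_t|x-\hat x|$, $\int_0^\infty q_t\,dt\le C_q$. The $y$-part also uses symmetry of mixed derivatives.
\item Everything is transferred to $u^y$ by the turnpike estimate of~\cite{CecchinConfortiDurmusEichinger2024}.
\end{itemize}

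\textbf{Your route.}
\begin{itemize}
\item The gradient bound comes from a cost comparison that uses only Assumption~\ref{ass-cost}(i).
\item The Hessian bound comes from $W^{2,p}$ and Schauder estimates for the stationary equation.
\item The $y$-Lipschitz bound comes from the linear equation for $u^y-u^{y'}$ at the ergodic level, with $\delta^y-\delta^{y'}=\int (V(\cdot,y)-V(\cdot,y'))\,dm_b$.
\end{itemize}

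\textbf{What each buys.} With your route, (iii) is needed only to quantify $c_{x\alpha}$ on the compact set $\{|\alpha|\le C_b\}$, which is the improvement Remark~\ref{rem:cost} anticipates. You also need neither differentiability in $y$ nor a turnpike theorem for second-order quantities, only a classical discounted or long-time approximation for the first-order bound. The paper's route gives a single probabilistic mechanism with semi-explicit constants such as $2C_q(\kappa_1^c+C_\Phi C_\ell)$, and its coupling lemma is reused later.

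Your step (c) is really the stationary analogue of the paper's argument. Let $P^b_t$ be the semigroup of $\Delta+b\cdot\nabla$ and write $w=-\int_0^\infty P^b_t g\,dt$ up to a constant. Then $|P^b_t g(x)-P^b_t g(x')|\le 2\|g\|_\infty q_t|x-x'|$, with $q$ as in Lemma~\ref{lem-ref-coupling1} for the drift $b$. This gives $\|\nabla w\|_\infty\le 2C_q\|g\|_\infty$ directly, with no separate elliptic gradient estimate.

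\textbf{Two details need fixing.}

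First, in (a) both trajectories must be driven by the \emph{same} control process $(\alpha_s)$: optimal from $x$, merely admissible from $x'$, with reflection-coupled noises. Then the drifts cancel and only the uniform-in-$\alpha$ Lipschitz bound on $c(\cdot,\alpha)+V(\cdot,y)$ enters.
\begin{itemize}
\item Running each trajectory under its own optimal feedback would presuppose the bound on $\nabla u^T$ you are trying to prove.
\item A synchronous coupling with identical drifts never meets.
\item Meeting in time of order one only gives an $O(1)$ bound. What you need is $\mathbb{E}\int_0^\tau|X_s-X'_s|\,ds\le C|x-x'|$. This follows from $\mathbb{P}(\tau>t)\le q_t|x-x'|$, by the argument of Lemma~\ref{lem-ref-coupling1} with zero relative drift.
\end{itemize}

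Second, no function with $c_{\alpha\alpha}\ge I/\kappa_2^c$ on all of $\mathbb{R}^d$ is bounded. So the global bound $\kappa_0^c$ you invoke for $\sup_x|H_p(x,0)|$ cannot exist; the paper's Remark~\ref{rem:cost} has the same slip. Use instead $|\alpha^*(x,0)|\le\kappa_2^c\sup_x|c_\alpha(x,0)|<\infty$, which holds by compactness. This quantity is not controlled by the listed constants: for $c=\tfrac12|\alpha-a_0|^2$ and $V\equiv0$ the drift is the constant $a_0$. Hence $C_b$ must depend on it as well.
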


\medskip
\noindent\textbf{Exponential contractivity and uniqueness of the invariant law.}
We now state the contractivity estimate that yields ergodicity of SFP.

\begin{thm}[Contractivity of SFP]\label{thm-SFP-converge}
	Let Assumptions~\ref{ass-potential} and~\ref{ass-cost} hold, and fix $\lambda>0$.
	Let $(X_t,Y_t)$ and $(X'_t,Y'_t)$ be two solutions of~\eqref{eq-SFP-XY} (possibly on the same probability space) with initial laws $\Law(X_0,Y_0)$ and $\Law(X'_0,Y'_0)$. Define the weighted distance on $\mathbb{T}^d\times\mathbb{R}^D$ by
	\[
		d_\lambda\bigl((x,y),(x',y')\bigr)
		:=\|x-x'\| + \frac{2C_L}{\lambda}\|y-y'\|,
	\]
	and let $W_{d_\lambda}$ be the corresponding Wasserstein--$1$ distance on $\mathcal{P}_1(\mathbb{T}^d\times\mathbb{R}^D)$.
	Then there exist constants $C,c>0$ such that for all $t\ge0$,
	\[
		W_{d_\lambda}\bigl(\Law(X_t,Y_t),\Law(X'_t,Y'_t)\bigr)
		\le C e^{-ct}\,
		W_{d_\lambda}\bigl(\Law(X_0,Y_0),\Law(X'_0,Y'_0)\bigr).
	\]
	More precisely, with $\lambda_0:=\min\{\lambda/2,1\}$, one can take
	\[
		C = 1 +
		\frac{2M}{\sqrt{\lambda_0}}
		\exp\!\left( \frac{M^2}{4\lambda_0} \right),
		\qquad
		c =
		\left(
		\frac{1}{\lambda_0}
		+
		\frac{2M}{\lambda_0^{3/2}}
		\exp\!\left( \frac{M^2}{4\lambda_0} \right)
		\right)^{-1},
	\]
	where $M$ depends only on $(d,\kappa_1^c,\kappa_2^c,\kappa_3^c,\theta_c,C_\ell,C_\Phi,d)$.
	Consequently, the SFP Markov process~\eqref{eq-SFP-XY} admits a unique invariant probability measure $\rho_\lambda$.
\end{thm}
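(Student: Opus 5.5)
The plan is a reflection coupling in the $X$-component combined with the synchronous (deterministic) evolution of the $Y$-component, measured through a concave, Eberle-type modification of $d_\lambda$. By Lemma~\ref{lem-regularity}, the drift $b(x,y):=H_p(x,\nabla u^y(x))$ is bounded by $C_b$ and Lipschitz with constant $C_L$ in $(x,y)$. This is the only input we need. Since $Y_t$ stays in the closed convex hull of $\ell(\mathbb{T}^d)\cup\{Y_0\}$ and is attracted to $\mathrm{conv}\,\ell(\mathbb{T}^d)$ at rate $\lambda$, we may first handle the transient, or simply use that the Lipschitz bound holds along trajectories after extending $b$ in $y$ in a Lipschitz way. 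On the torus we lift to $\mathbb{R}^d$ and couple by reflecting $W'$ across the hyperplane orthogonal to $X_t-X'_t$ (taking the shortest representative). We switch to synchronous coupling when $\|X_t-X'_t\|$ is small, in the usual way, to avoid the degeneracy at coincidence.

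With $r_t=\|X_t-X'_t\|$ and $s_t=\|Y_t-Y'_t\|$, Itô's formula under reflection gives
\[
dr_t\le \bigl(C_L r_t + C_L s_t\bigr)\,dt + 2\sqrt2\,dB_t,
\]
where the $x$-Lipschitz part may be replaced by the bounded-drift estimate $2C_b$ where convenient. The $Y$-component gives, pathwise,
\[
\frac{d}{dt}s_t\le -\lambda s_t + \lambda C_\ell r_t .
\]
The weight $2C_L/\lambda$ in $d_\lambda$ is chosen so that the coupling term $C_L s_t$ in $dr$ is absorbed: set $\rho_t:=r_t+\frac{2C_L}{\lambda}s_t$. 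Then the $s$-contribution to the drift of $\rho$ is $C_Ls_t-2C_Ls_t=-C_Ls_t$, which is dissipative. The remaining danger is the term $2C_LC_\ell r_t$ together with $C_Lr_t$, i.e.\ an expansion of order $M r_t$ in the $r$-direction. Here $M$ collects $C_L,C_b,C_\ell$, and hence depends only on the structural constants.

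The core step is therefore to show that the reflected Brownian noise (of size $2\sqrt2$) beats a linear expansion $Mr$ on the bounded set where $r\le\operatorname{diam}\mathbb{T}^d$, while the $s$-part contracts at rate $\lambda$. I would build a concave increasing $\phi$ in the spirit of Eberle's construction, namely
\[
\phi'(r)=\exp\!\bigl(-M^2\text{-type quadratic in } r\bigr)\times(\text{correction}),
\]
and consider $\phi(r_t)+a s_t$. The generator inequality $4\phi''+M r\phi'\le -c_0\phi$ together with $\lambda$-damping of $s$ gives a contraction rate. The rate is governed by $\lambda_0=\min\{\lambda/2,1\}$: the slower of the $s$-contraction ($\lambda/2$, after absorbing the cross term) and the diffusive contraction normalised to $1$. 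The explicit constants in the statement, with factors $\exp(M^2/(4\lambda_0))$ and powers $\lambda_0^{-1/2},\lambda_0^{-3/2}$, are what one obtains from $\phi'(r)=\exp(-Mr^2/\ldots)$ integrated against a Gaussian-type profile. I expect to recover them by choosing the scale $\sqrt{\lambda_0}$ in the exponent and bounding $\int_0^\infty e^{-\lambda_0 u^2+Mu}\,du\le \sqrt{\pi/\lambda_0}\,e^{M^2/(4\lambda_0)}$.

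Concretely, with $\tilde\rho:=\phi(r)+\frac{2C_L}{\lambda}s$, I aim for $d\,\mathbb{E}\tilde\rho_t\le -c\,\mathbb{E}\tilde\rho_t\,dt$. Comparing gives $\tilde\rho$ with $d_\lambda$: $\phi(r)\le r$, and $\phi(r)\ge \phi'(\mathrm{diam})\,r$ is bounded below by $r/C$. Grönwall then yields $\mathbb{E}\,d_\lambda\le Ce^{-ct}\,\mathbb{E}\,d_\lambda(\text{time }0)$, and we take the infimum over initial couplings. The main obstacle is precisely this Lyapunov/generator computation. The cross term $r\to s$ of size $\lambda C_\ell r$ carries weight $2C_L/\lambda$, so it is $\lambda$-independent, of size $M r$, and must be handled by the concave $\phi$ alone, uniformly in small $\lambda$. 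Getting the constants into the stated form will require care with the cutoff at small $r$, where the coupling is switched.

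Finally, the uniqueness and existence of $\rho_\lambda$ follow from contraction in the complete metric space $(\mathcal{P}_1,W_{d_\lambda})$. Pick $T$ with $Ce^{-cT}<1$; then $P_T$ is a strict contraction, so Banach's theorem gives a unique fixed point. Its time average over $[0,T]$ is invariant for all $t$ by the semigroup property and uniqueness. The first moment of $Y$ is uniformly bounded, since $Y$ is eventually in $\mathrm{conv}\,\ell(\mathbb{T}^d)$, so the space is preserved.
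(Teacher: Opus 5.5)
You have the right architecture (Eberle-type mixed reflection/synchronous coupling in $X$, deterministic $Y$, the weight $2C_L/\lambda$, a concave Lyapunov function, Gr\"onwall, then a fixed-point argument), but the Lyapunov step as you set it up does not close. With $\tilde\rho=\phi(r)+as$ and $a=2C_L/\lambda$, It\^o's formula gives the drift bound
\[
\phi'(r)\,(C_Lr+C_Ls)+4\phi''(r)+a\lambda C_\ell\,r-a\lambda s .
\]
The cross term $a\lambda C_\ell r=2C_LC_\ell r$ comes from $a\,ds_t$, so it is \emph{not} multiplied by $\phi'(r)$. Your inequality $4\phi''+Mr\phi'\le -c_0\phi$ silently puts it there, and this is not cosmetic. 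Near $r=0$ (synchronous regime, $X=X'$, $Y\neq Y'$, no noise), the $s$-terms are absorbed only if $a\lambda\ge C_L\phi'(0)$. On the reflection region the $r$-terms then require $4\phi''(r)\le -a\lambda C_\ell r\le -C_LC_\ell\phi'(0)\,r$. Integrating up to $D=\operatorname{diam}\mathbb{T}^d=\sqrt d/2$ gives
\[
\phi'(D)\le \phi'(0)\Bigl(1-\frac{C_LC_\ell D^2}{8}\Bigr).
\]
This bound does not involve $a$, so no choice of weight or of $\phi$ helps. Unless $C_LC_\ell D^2<8$, which is not assumed, $\phi'$ must vanish before $D$, and your comparison $\phi(r)\ge \phi'(\operatorname{diam})\,r\ge r/C$ breaks down. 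Replacing the Lipschitz bound by $2C_b$ on the reflection region does not rescue it, because the constraint $a\lambda\ge C_L\phi'(0)$ comes from the synchronous regime.

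The missing idea is to apply the concave function to the \emph{whole} weighted distance $R_t=r_t+\frac{2C_L}{\lambda}s_t$, as the paper does. Since $s$ has finite variation, $R$ carries the same noise as $r$. Its full drift, cross term included, is bounded by $C_L(1+2C_\ell)r-C_Ls\le -\lambda_0R+M$, with $M=\frac{\sqrt d}{2}(1+C_L+2C_LC_\ell)$, using $r\le\sqrt d/2$, $\lambda_0\le 1$ and $\lambda_0\le\lambda/2$. Now the positive part $M$ is multiplied by $f'(R)$, and the far field of $R$ is the $s$-direction, where the drift is genuinely contractive, so $f'$ never has to vanish. Eberle's function for $K(r)=\lambda_0-M/r$ is
\[
f'(r)=\frac12\int_r^\infty s\exp\Bigl(-\frac12\int_r^s\tau K(\tau)\,d\tau\Bigr)ds .
\]
It solves the paper's ODE $2f''-rKf'+r=0$ on all of $[0,\infty)$, and is concave with $f'\ge\lambda_0^{-1}$. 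This is exactly where $\lambda_0$ enters the Gaussian profile:
\[
f'(0)=\frac12\int_0^\infty s\,e^{-\lambda_0s^2/4+Ms/2}\,ds\le\lambda_0^{-1}+2M\lambda_0^{-3/2}e^{M^2/(4\lambda_0)} .
\]
Together with $R/\lambda_0\le f(R)\le f'(0)R$, this yields $c=1/f'(0)$ and $C=\lambda_0 f'(0)$; a function of $r\in[0,\sqrt d/2]$ alone cannot produce these constants. In the synchronous regime one uses instead $\frac{d}{dt}R_t\le-\frac\lambda2R_t+O(n^{-1})$, which together with $f'\ge\lambda_0^{-1}$ still gives a drift of at most $-R$. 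Your existence and uniqueness argument for $\rho_\lambda$ via a strict contraction of $P_T$ is fine.
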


The argument follows the coupling approach of \cite[Theorem~1]{DuRenSuciuWang2024} (see also \cite{Eberle2016} for the underlying contraction mechanism). The details of the proof are given in Appendix~\ref{app-thm}.

\begin{rem}\label{rem:slow}
	The decay rate $c$ deteriorates as $\lambda\to0$, reflecting the slow relaxation of the $Y$-component when the update step is small. This theorem is primarily used to guarantee existence and uniqueness of $\rho_\lambda$.
\end{rem}

\subsection{Dynamic distance to the Nash equilibrium}\label{subsection-distance}

In this subsection we quantify the asymptotic bias induced by the self-updating rule in SFP.
Throughout, Assumptions~\ref{ass-potential} and~\ref{ass-cost} are in force.

\paragraph{Frozen dynamics and conditional equilibria.}
For each fixed parameter $y\in\mathbb{R}^D$, consider the \emph{frozen} diffusion
\begin{equation}\label{eq-frozen}
	dX_t^{y}
	=
	H_p\!\bigl(X_t^{y}, \nabla u^{y}(X_t^{y})\bigr)\,dt + \sqrt{2}\,dW_t,
\end{equation}
where $u^{y}$ solves the ergodic HJB equation \eqref{eq-HJB-xy}.
Since the drift in \eqref{eq-frozen} is uniformly bounded and Lipschitz in $x$ (Lemma~\ref{lem-regularity}), the process admits a unique invariant measure, denoted by $\widehat\rho_y$.

Let $\rho_\lambda$ be the unique invariant measure of the SFP system \eqref{eq-SFP-XY}, and write $\rho_\lambda^Y$ for its $Y$-marginal.
Recall that $m^*$ denotes the Nash equilibrium of the ergodic MFG, i.e.\ the $x$-component of the unique solution $(\delta^*,u^*,m^*)$ to \eqref{eq-MFG}.

\begin{thm}\label{thm-main}
	Let Assumptions~\ref{ass-potential} and~\ref{ass-cost} hold. Then:

	\begin{enumerate}

		\item (Bias of the conditional equilibrium.) There exists a constant $C_{\mathrm{fr}}>0$, depending only on $(C_b,C_L,C_\ell,d)$, such that
		      \begin{equation}\label{eq:main-fr}
			      \mathcal{W}_1\!\left(\rho_\lambda,\ \widehat\rho_{\cdot}\otimes\rho_\lambda^{Y}\right)
			      \;\le\;
			      C_{\mathrm{fr}}\,\lambda .
		      \end{equation}

		\item (Control of Lasry--Lions divergence.) The averaged Lasry--Lions divergence satisfies
		      \begin{equation}\label{eq:main-ll}
			      \int_{\mathbb{R}^D} D_{\mathrm{LL}}\!\bigl(\nabla u^*,\nabla u^y\bigr)\,\rho_\lambda^Y(dy)
			      \;\le\;
			      2\,C_\Phi C_\ell\,C_{\mathrm{fr}}\,\lambda .
		      \end{equation}

		\item (Distance to the MFG equilibrium.) Let $\mathcal{C},\mathcal{R}$ be the constants in the reflection-coupling estimate for drifts bounded by $C_b$ and Lipschitz with constant $C_L$.
		      Set
		      \[
			      \Gamma
			      :=
			      \frac{\kappa_2^c\kappa_3^c}{\mathcal{R}}(1+C_b)^{\theta_c}\,\mathcal{C}.
		      \]
		      Then
		      \begin{equation}\label{eq:main-w1}
			      \mathcal{W}_1\!\bigl(\rho_\lambda,\ m^*\otimes\rho_\lambda^{Y}\bigr)
			      \;\le\;
			      C_{\mathrm{fr}}\,\lambda
			      \;+\;
			      \Gamma\,\sqrt{8\kappa_2^c\,C_\Phi C_\ell\,C_{\mathrm{fr}}}\ \lambda^{1/2}.
		      \end{equation}
		      In particular, the asymptotic error is of order $O(\lambda^{1/2})$ as $\lambda\to0$.
	\end{enumerate}
\end{thm}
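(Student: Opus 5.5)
The plan has three stages, matching the three items of Theorem~\ref{thm-main}: a timescale-separation estimate for the frozen dynamics, an energy identity for the Lasry--Lions divergence, and a final assembly by reflection coupling.

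\textbf{Item 1 (bias of the conditional equilibrium).} We use the timescale separation. Start the SFP system \eqref{eq-SFP-XY} from $\rho_\lambda$, so that $(X_t,Y_t)\sim\rho_\lambda$ for all $t$. On a window $[0,T]$ we compare $X$ with the frozen diffusion $X^{Y_0}$ of \eqref{eq-frozen}, started at $X_0$ and driven by the same Brownian motion. Since $\|\dot Y_t\|\le \lambda(C_\ell+\|Y_t\|)\lesssim \lambda$ on the support ($Y$ lives in the convex hull of $\ell(\mathbb{T}^d)$), we have $\|Y_t-Y_0\|\le C\lambda t$. Lemma~\ref{lem-regularity} then gives a drift mismatch of size $C_L C\lambda t$.

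A synchronous Gronwall comparison is not good enough here, because it grows exponentially in $T$. Instead we use the reflection-coupling contraction (constants $\mathcal{C},\mathcal{R}$) for the frozen dynamics with parameter $Y_0$. This yields $\mathcal{W}_1(\Law(X_T\mid Y_0),\widehat\rho_{Y_0})\le \mathcal{C}e^{-\mathcal{R}T}\,\mathrm{diam}+\mathcal{C}\int_0^T e^{-\mathcal{R}(T-s)}C_L C\lambda s\,ds$, where we treat the $y$-perturbation through a Duhamel-type estimate. Taking $T$ of order $\log(1/\lambda)$ gives an error $O(\lambda\log(1/\lambda))$, which is not yet the claimed rate. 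To get a clean $O(\lambda)$, we compare against $\widehat\rho_{Y_T}$ and write $Y_T-Y_s$ instead, so the mismatch at time $s$ is $\lambda(T-s)$ and the kernel $e^{-\mathcal{R}(T-s)}\lambda(T-s)$ integrates to $O(\lambda)$ uniformly in $T$. Letting $T\to\infty$, and using that $(X_T,Y_T)\sim\rho_\lambda$, gives \eqref{eq:main-fr}.

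\textbf{Item 2 (Lasry--Lions divergence).} This is an energy argument. Apply stationarity to the test function $\Phi(y)$, i.e.\ use $\mathbb{E}_{\rho_\lambda}[\tfrac{d}{dt}\Phi(Y_t)]=0$, which gives $\int \nabla\Phi(y)\cdot(\langle\ell,\rho_\lambda(\cdot|y)\rangle-y)\,\rho_\lambda(dx,dy)=0$. By item~1 we may replace $\rho_\lambda(\cdot\mid y)$ by $\widehat\rho_y$ at cost $C_\Phi C_\ell C_{\mathrm{fr}}\lambda$. Next use the Lasry--Lions identity: subtract the HJB equations for $u^*$ and $u^y$, and integrate against $\widehat\rho_y$ and $m^*$ using the two FP equations. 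This yields
\[
D_{\mathrm{LL}}(\nabla u^*,\nabla u^y)+\bigl(\nabla\Phi(y)-\nabla\Phi(y^*)\bigr)\cdot\bigl(\langle\ell,\widehat\rho_y\rangle-y^*\bigr)=0,
\qquad y^*=\langle \ell,m^*\rangle .
\]
The convexity of $\Phi$ lets us pass from $\langle\ell,\widehat\rho_y\rangle-y^*$ to $y-y^*$. The cross term with $\nabla\Phi(y^*)$ vanishes after averaging by stationarity, because $\int(\langle\ell,\cdot\rangle - y)\,d\rho_\lambda=0$. Combining these pieces with the item-1 error, with a second comparison cost giving the factor $2$, yields \eqref{eq:main-ll}.

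\textbf{Item 3 (distance to the equilibrium).} First, strong convexity of $H$ (inherited from Assumption~\ref{ass-cost}(ii)) gives $D_{\mathrm{LL}}\ge \frac{1}{2\kappa_2^c}\int|H_p(\nabla u^*)-H_p(\nabla u^y)|^2\,d(m^*+\widehat\rho_y)$. Second, apply reflection coupling between the frozen diffusions with drifts $b^*$ and $b^y$: $\mathcal{W}_1(m^*,\widehat\rho_y)\le \frac{\mathcal{C}}{\mathcal{R}}\|b^*-b^y\|_{L^1(m^*)}$. We then relate $\|b^*-b^y\|$ to $|\nabla u^*-\nabla u^y|$ via the $H_{pp}$ and $c_{x\alpha}$ bounds, which produces the factor $\kappa_2^c\kappa_3^c(1+C_b)^{\theta_c}$ in $\Gamma$. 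Cauchy--Schwarz and Jensen in $y$ then give $\int\mathcal{W}_1(m^*,\widehat\rho_y)\,\rho^Y_\lambda(dy)\le \Gamma\sqrt{2\kappa_2^c\cdot 2\cdot 2C_\Phi C_\ell C_{\mathrm{fr}}\lambda}$. The triangle inequality with item~1 yields \eqref{eq:main-w1}.

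\textbf{Main obstacle.} I expect item~1 to be the hardest step, specifically obtaining a uniform $O(\lambda)$ rate rather than $O(\lambda\log(1/\lambda))$. This requires handling the slowly varying parameter inside the reflection-coupling contraction, with care to keep the constants independent of $y$.
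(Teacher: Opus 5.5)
Your items 2 and 3 are sound once item 1 holds, but item 1 has a genuine gap at exactly the step meant to remove the logarithm. You freeze the parameter at the terminal value $Y_T$ on the whole window $[0,T]$, and this makes the comparison drift $b(\cdot,Y_T)$ anticipating. Indeed $Y_T=e^{-\lambda T}Y_0+\lambda\int_0^T e^{-\lambda(T-s)}\ell(X_s)\,ds$ depends on the entire Brownian path on $[0,T]$. The reflection-coupling/Duhamel estimate of Lemma~\ref{lem-reflection-coupling} is an It\^o-calculus argument and needs adapted drifts. You also cannot make $Y_T$ deterministic by conditioning on it, because $W$ is not a Brownian motion under $\mathbb{P}(\cdot\mid Y_T)$. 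For the same reason, a process driven by $b(\cdot,Y_T)$ and started from $\widehat\rho_{Y_T}$ need not have conditional law $\widehat\rho_{Y_T}$ given $Y_T$ at time $T$; choosing that initial condition already requires knowing the future. So identifying its time-$T$ law with $\widehat\rho_\cdot\otimes\rho_\lambda^Y$ is unfounded. As written, item 1 only gives $O(\lambda\log(1/\lambda))$, and this propagates to $O(\lambda\log(1/\lambda))$ in item 2 and $O(\sqrt{\lambda\log(1/\lambda)})$ in item 3.

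The missing idea is to use stationarity self-referentially instead of bounding the initial discrepancy by $\mathrm{diam}$. Set $E:=\mathcal{W}_1(\rho_\lambda,\widehat\rho_\cdot\otimes\rho_\lambda^Y)$. Since $(X_0,Y_0)\sim\rho_\lambda$, the initial discrepancy in your first estimate is $E$ itself, up to a constant coming from the Lipschitz dependence of $y\mapsto\widehat\rho_y$ (which also follows from Lemma~\ref{lem-reflection-coupling}). The output at time $t_0$ is again $E$ up to $O(\lambda t_0)$, because $(X_{t_0},Y_{t_0})\sim\rho_\lambda$ and $|Y_{t_0}-Y_0|\le 2C_\ell\lambda t_0$. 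Choose $t_0$ independent of $\lambda$ so that the contraction factor is at most $\frac12$. Then $E\le C(t_0)\lambda+\frac12 E$, hence $E=O(\lambda)$. This is the paper's proof of Proposition~\ref{prop-rho-rhohat}. It also shows that your dismissal of synchronous coupling was a misdiagnosis: on a fixed window, the factor $e^{C_Lt_0}$ in $\frac{2\lambda C_\ell}{C_L}e^{C_Lt_0}$ is just a constant. Your kernel heuristic $\lambda(T-s)e^{-\mathcal{R}(T-s)}$ could only be made rigorous analytically, for instance through a Poisson equation for the frozen generator tested against $\int\mathcal{L}_\lambda\phi\,d\rho_\lambda=0$. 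That route requires $y$-regularity of the Poisson solution and is a heavier argument.

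On the other items: in item 2, the $Y$-stationarity identity $\int\nabla\psi(y)\cdot(\ell(x)-y)\,\rho_\lambda(dx\,dy)=0$ with $\psi(y)=\Phi(y)-\nabla\Phi(y^*)\cdot y$ does everything the paper's auxiliary measure $\mu$ is used for, and your sign in the Lasry--Lions identity is the correct one. In item 3, your co-coercivity lower bound of $D_{\mathrm{LL}}$ by $H_p$-differences makes the conversion to $\nabla u^*-\nabla u^y$ unnecessary. Note, however, that the $c_{x\alpha}$ bound plays no role when both drifts are evaluated at the same $x$. So the factor $\kappa_2^c\kappa_3^c(1+C_b)^{\theta_c}$ in $\Gamma$ is not produced by your argument; only the stability constants $\mathcal{C},\mathcal{R}$ enter.
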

The proof of this main theorem will be  reported  in Section~\ref{proof-thm-2}.

\begin{rem}[Lasry–Lions divergence]
    The Lasry--Lions divergence is used to compare the frozen equilibria $\widehat{\rho}_y$ with the Nash equilibrium $m^*$. 
    Under the coercivity of $H$, it yields a control of the feedback error, which is then converted into a Wasserstein estimate. As demonstrated by the numerical examples in the next section, the divergence also serves as a natural metric for quantifying the error between the SFP invariant measure and the MFG equilibrium.
\end{rem}

\begin{rem}[Sharpness of the $\lambda^{1/2}$ rate]\label{rem:sharpness}
	The bound \eqref{eq:main-w1} yields an $O(\lambda^{1/2})$ convergence rate as $\lambda\to0$.
	This rate is \emph{sharp} in general: in the linear--quadratic example of
	Subsection~\ref{subsection-ex1}, the stationary law of the SFP dynamics satisfies
	\[
		\mathcal{W}_2\!\bigl(\rho_\lambda,\ m^*\otimes\rho_\lambda^Y\bigr)\asymp \lambda^{1/2}
		\qquad\text{as }\lambda\to0,
	\]
	and hence the same order holds for $\mathcal{W}_1$ (since $\mathcal{W}_1\le \mathcal{W}_2$).
\end{rem}

\section{Examples}\label{section-examples}

\subsection{Linear--quadratic case}\label{subsection-ex1}

For clarity we present a one–dimensional example on $\mathbb{R}$ (rather than on the torus), for which both the ergodic MFG and the SFP dynamics can be solved explicitly. The example also illustrates that the $O(\sqrt{\lambda})$ accuracy is sharp.

\medskip
\noindent\textbf{Model.}
We consider the quadratic running cost and a cylindrical interaction of the form
\[
	c(x,\alpha)=\frac12\,\alpha^2,
	\qquad
	\ell(r)=(r,\tfrac12 r^2),
	\qquad
	\Phi(y_1,y_2)=\frac12 y_1^2+y_2,
	\qquad x,r,\alpha\in\mathbb{R}.
\]
Then $\nabla\Phi(y)=(y_1,1)$ and therefore, for $m\in\mathcal{P}(\mathbb{R})$,
\[
	f(m,x)
	=
	\nabla\Phi(\langle \ell,m\rangle)\cdot \ell(x)
	=
	\Bigl(\int r\,m(dr)\Bigr)\,x+\frac12 x^2.
\]
Denote $\mu:=\int r\,m(dr)$.
For a frozen population $m$, the ergodic control problem reads
\[
	\delta^m
	=
	\lim_{T\to\infty}\frac1T
	\inf_{\alpha}
	\mathbb{E}\Bigl[\int_0^T\Bigl(\frac12\alpha_t^2+\mu X_t+\frac12 X_t^2\Bigr)\,dt\Bigr],
	\qquad
	dX_t=\alpha_t\,dt+\sqrt2\,dW_t.
\]
The Hamiltonian is
\[
	H(x,p)=\inf_{\alpha\in\mathbb{R}}\bigl(\alpha p+\tfrac12\alpha^2\bigr)
	=-\frac12 p^2,
	\qquad
	H_p(x,p)=-p,
\]
so the optimal feedback is $\alpha^*(x)=-u'(x)$.

\medskip
\noindent\textbf{Solving the ergodic HJB--FP system.}
The stationary MFG system is
\begin{equation}\label{eq-ex-HJB-FP}
	\begin{cases}
		-\delta + u''(x)-\frac12\bigl(u'(x)\bigr)^2 + \mu x+\frac12 x^2=0, \\[1mm]
		\partial_x\!\bigl(u'(x)\,m(x)\bigr)+m''(x)=0,
	\end{cases}
\end{equation}
where $\mu=\int x\,m(dx)$.

Fix $m$ (hence $\mu$) and seek a quadratic corrector
\[
	u^m(x)=\frac a2 x^2+bx + \text{const}.
\]
Then $u'(x)=ax+b$ and $u''(x)=a$. Plugging into the HJB equation gives
\[
	-\delta + a -\frac12(ax+b)^2 + \mu x+\frac12 x^2=0,
\]
hence identification of coefficients yields
\[
	-\frac12 a^2+\frac12=0 \ \Rightarrow\ a=\pm1,
	\qquad
	-ab+\mu=0 \ \Rightarrow\ b=\frac{\mu}{a}.
\]
To obtain an ergodic (mean-reverting) controlled dynamics on $\mathbb{R}$ we must select $a=1$, so $b=\mu$, and
\begin{equation}\label{eq:u-m-explicit}
	u^m(x)=\frac12 x^2+\mu x+\text{const},
	\qquad
	\delta^m=1-\frac12\mu^2.
\end{equation}

The associated optimal drift is
\[
	\alpha^m(x)=-u^{m\,\prime}(x)=-(x+\mu),
\]
so the optimally controlled process is an Ornstein--Uhlenbeck diffusion
\[
	dX_t=-(X_t+\mu)\,dt+\sqrt2\,dW_t,
\]
whose invariant law is $\mathcal N(-\mu,1)$. Therefore the mean of the invariant law equals $-\mu$, and the MFG fixed-point condition $\mu=\int x\,m(dx)$ enforces $\mu=-\mu$, i.e.\ $\mu^*=0$. Consequently,
\[
	m^*=\mathcal N(0,1),
	\qquad
	u^*(x)=\frac12 x^2+\text{const}.
\]

\medskip
\noindent\textbf{SFP dynamics.}
For the SFP update $dm_t=\lambda(\delta_{X_t}-m_t)\,dt$, the moment
\[
	Y_t:=\int x\,m_t(dx)
\]
satisfies $dY_t=\lambda(X_t-Y_t)\,dt$.
Since the optimal drift depends on $m_t$ only through $\mu_t=\int x\,m_t(dx)=Y_t$, the reduced SFP system is
\begin{equation}\label{eq-ex-SFP}
	\begin{cases}
		dX_t=-(X_t+Y_t)\,dt+\sqrt2\,dW_t, \\
		dY_t=\lambda(X_t-Y_t)\,dt.
	\end{cases}
\end{equation}
This is a linear Gaussian diffusion on $\mathbb{R}^2$ with drift matrix
\[
	A=\begin{pmatrix}-1 & -1\\ \lambda & -\lambda\end{pmatrix},
	\qquad
	d\binom{X_t}{Y_t}=A\binom{X_t}{Y_t}\,dt+\binom{\sqrt2}{0}\,dW_t.
\]

\medskip
\noindent\textbf{Invariant law and $O(\sqrt\lambda)$ sharpness.}
The process $(X_t,Y_t)$ is ergodic for every $\lambda>0$ (both eigenvalues of $A$ have negative real part). Its unique invariant law $\rho_\lambda$ is centered Gaussian. Writing its covariance as
\[
	\Sigma_\lambda=
	\begin{pmatrix}
		\Var_{\rho_\lambda}(X)   & \Cov_{\rho_\lambda}(X,Y) \\
		\Cov_{\rho_\lambda}(X,Y) & \Var_{\rho_\lambda}(Y)
	\end{pmatrix},
\]
the Lyapunov equation $A\Sigma_\lambda+\Sigma_\lambda A^\top + BB^\top=0$ with
$B=(\sqrt2,0)^\top$ yields the explicit solution
\begin{equation}\label{eq:Sigma-explicit}
	\Var_{\rho_\lambda}(X)=\frac{\lambda+2}{2(\lambda+1)},
	\qquad
	\Cov_{\rho_\lambda}(X,Y)=\frac{\lambda}{2(\lambda+1)},
	\qquad
	\Var_{\rho_\lambda}(Y)=\frac{\lambda}{2(\lambda+1)}.
\end{equation}
In particular, as $\lambda\to0^+$,
\[
	\Var_{\rho_\lambda}(X)=1-\frac{\lambda}{2}+O(\lambda^2),
	\qquad
	\Var_{\rho_\lambda}(Y)=\frac{\lambda}{2}+O(\lambda^2),
	\qquad
	\Cov_{\rho_\lambda}(X,Y)=\frac{\lambda}{2}+O(\lambda^2).
\]

Now compare $\rho_\lambda$ to the product measure $m^*\otimes \rho_\lambda^Y$, where $m^*=\mathcal N(0,1)$ and $\rho_\lambda^Y$ is the $Y$-marginal of $\rho_\lambda$ (a centered Gaussian with variance $\Var_{\rho_\lambda}(Y)$). Both measures are Gaussian with zero mean; their covariance matrices are
\[
	\Sigma_1=\Sigma_\lambda,
	\qquad
	\Sigma_2=\begin{pmatrix}1 & 0\\ 0 & \Var_{\rho_\lambda}(Y)\end{pmatrix}.
\]
The squared $\mathcal{W}_2$ distance between centered Gaussians satisfies
\[
	\mathcal{W}_2^2(\mathcal N(0,\Sigma_1),\mathcal N(0,\Sigma_2))
	=
	\Tr\!\Bigl(\Sigma_1+\Sigma_2-2(\Sigma_2^{1/2}\Sigma_1\Sigma_2^{1/2})^{1/2}\Bigr).
\]
Using \eqref{eq:Sigma-explicit} and a Taylor expansion at small $\lambda$ (equivalently, using that the Frobenius norm of $\Sigma_1-\Sigma_2$ is $O(\lambda)$ while $\Sigma_2$ stays uniformly non-degenerate on the $X$-block), one obtains
\[
	\mathcal{W}_2(\rho_\lambda,\ m^*\otimes \rho_\lambda^Y)=O(\sqrt{\lambda})
	\qquad\text{as }\lambda\to0^+.
\]
Thus the limiting discrepancy between the stationary SFP law and the equilibrium product law is of order $\sqrt{\lambda}$.

\subsection{Numerical examples}\label{subsection-ex2}

\subsubsection{General experimental setup}
Our numerical experiments aim to illustrate the convergence of the SFP algorithm towards the ergodic MFG Nash equilibrium. The algorithm approximates the theoretical equilibrium distribution $m^*$ by simulating a population of $N$ (non-interacting) particles tracking the equilibrium dynamics, and a scalar belief $Y_t$ tracking the observation of each particle.

For each example, the SFP algorithm was simulated using the Euler--Maruyama method with timestep $\Delta t$ for the dynamics of $X_t$. The value function $u^{Y_t}$ is either obtained using an analytical formula (in the LQ model), or by solving the instantaneous ergodic HJB equation using a finite-difference scheme (for general models).

To evaluate convergence, the following classes of metrics are computed over time:
\begin{itemize}
	\item \textbf{Lasry--Lions divergence}: This quantity estimates the distance between the instantaneous feedback $- \nabla u^{Y_t}$ and the theoretical optimal feedback $-\nabla u^*$. This tracks the control policy error formalized in \eqref{eq:main-ll}. This quantity is denoted $D_{\text{LL}}$ in the figures. 
	\item \textbf{Wasserstein distance}: The joint error on the product space between the physical particles and their internal belief tracking is evaluated by aggregating the results across independent random seeds. Because the theoretical limit of the parameter distribution (as $\lambda \to 0$) is a Dirac mass, the $L^1$ optimal transport cost separates as $\mathcal{W}_1(\text{Hist}^X, m^*) + \mathbb{E}[\|Y_t - Y^*\|_1]$. Here, $\text{Hist}^X$ denotes the distribution of the spatial particle $X_t$. We approximate this histogram and the expectation for the distance in $Y$ using the empirical simulation of particles. This quantity is referred to as ``$W_1$ Joint of Mean Hist'' in the figures.
\end{itemize}
To obtain robust statistics and mitigate noise, the reported metrics are averaged across 15 independent simulation runs (with different seens in the implementation) initialized with different random seeds. Specifically, we present two types of results:
\begin{itemize}
	\item \textbf{Scaling of long-time $D_{\text{LL}}$ and Wasserstein distance with respect to $\lambda$:} For the scaling analysis (Figures~\ref{fig:lq_scaling} and~\ref{fig:cosine_scaling}), the reported time-asymptotic $D_{\text{LL}}$ for each $\lambda$ is obtained by computing the cross-seed mean and standard deviation at each timestep. To smoothen the results, we subsequently average these statistics over the terminal quasi-stationary window (the final 10\% of the simulation). Error bars denote the cross-seed variability. The reported $W_1$ value for each $\lambda$ is computed by first averaging the empirical distributions across all 15 seeds to obtain a mean distribution, and then calculating the Wasserstein distance between this aggregated distribution and the theoretical target equilibrium $m^*$.
	\item \textbf{Time evolution of $Y_t$, $D_{\text{LL}}$ and Wasserstein distance:} The time-series plots in Figures~\ref{fig:lq_evol} and~\ref{fig:cosine_evol} display the mean across 15 independent runs of $N$ particles, with shaded areas representing one standard deviation. For the $W_1$ distance, we report both the distance on the joint distribution (as described above, in dark red line) and the distance over the first marginal ($X$) only (orange dashed line).
\end{itemize}

\subsubsection{Linear--quadratic case}
We first consider the LQ case introduced in Section~\ref{subsection-ex1}. The initial state for all particles is drawn from a Gaussian $\mathcal{N}(X_0, \Sigma_0^2)$, and the scalar belief is initialized at $Y_0$.
The parameter values and the update rates $\lambda$ used in the simulation are listed in Table~\ref{tab:lq_params}. Note that simulation steps were adaptively increased for smaller values of $\lambda$ to guarantee that convergence in the slow phase was adequately reached (from $30000$ steps for $\lambda = 1.0$ to $600000$ steps for $\lambda = 0.01$).
\begin{table}[htbp]
	\centering
	\begin{tabular}{|l|c|}
		\hline
		\textbf{Parameter}                  & \textbf{Value}              \\ \hline
		Population size $N$                 & $500$                       \\
		Simulation steps                    & $30000$ to $600000$       \\
		Spatial grid size $N_x$             & $500$                       \\
		Timestep $\Delta t$                 & $0.001$                     \\
		Initial state $X_0$                 & $1.0$                       \\
		Initial state variance $\Sigma_0^2$ & $0.01^2$                    \\
		Initial belief $Y_0$                & $0.5$                       \\
		Update rates $\lambda$              & $1.0, 0.3, 0.1, 0.03, 0.01$ \\ \hline
	\end{tabular}
	\caption{Parameters for the SFP simulation in the LQ case.}
	\label{tab:lq_params}
\end{table}

Figure~\ref{fig:lq_scaling} illustrates the scaling of the time-asymptotic errors (Lasry--Lions and Wasserstein metrics) with respect to the learning rate $\lambda$. Fitting a line in log-log scale provides an estimate of the convergence rate: we observe a rate of order $\lambda^{0.44}$, which is close to the theoretical rate of $\sqrt{\lambda}$ derived in Subsection~\ref{subsection-ex1}. The small difference can probably be explained by numerical errors due to time discretization and Monte Carlo samples.

To highlight the dynamics leading to these asymptotic values, we display the trace of the metrics over time for a large learning rate ($\lambda=1.0$) and a small learning rate ($\lambda=0.01$) in Figure~\ref{fig:lq_evol}. The convergence is slower but sharper for smaller $\lambda$. In both cases, we observe in the right column of Figure~\ref{fig:lq_evol} that the Wasserstein distance for the state component reaches lower values than the Wasserstein distance for the joint distribution of $(X,Y)$.

\begin{figure}[htbp]
	\centering
	\includegraphics[width=0.48\textwidth]{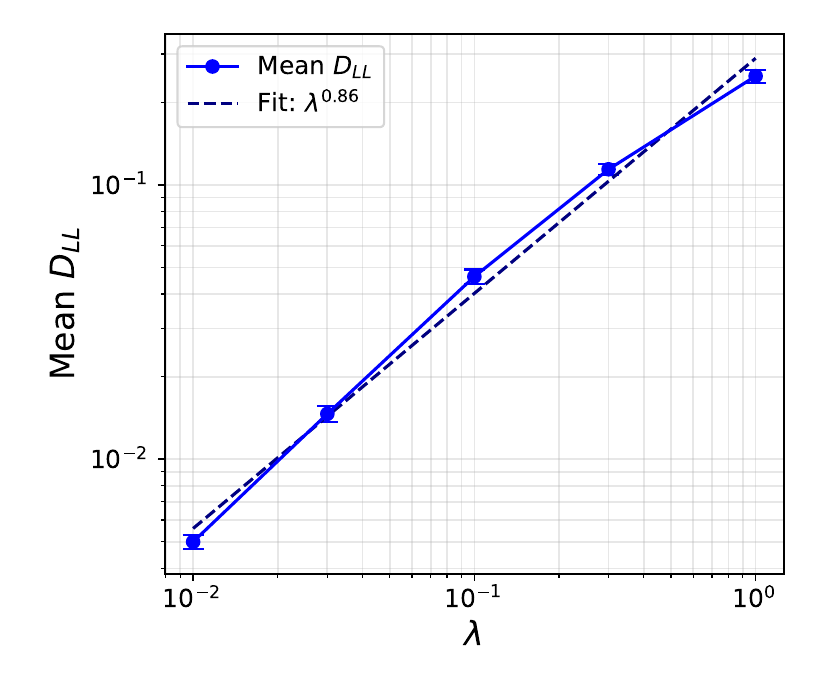} \hfill
	\includegraphics[width=0.48\textwidth]{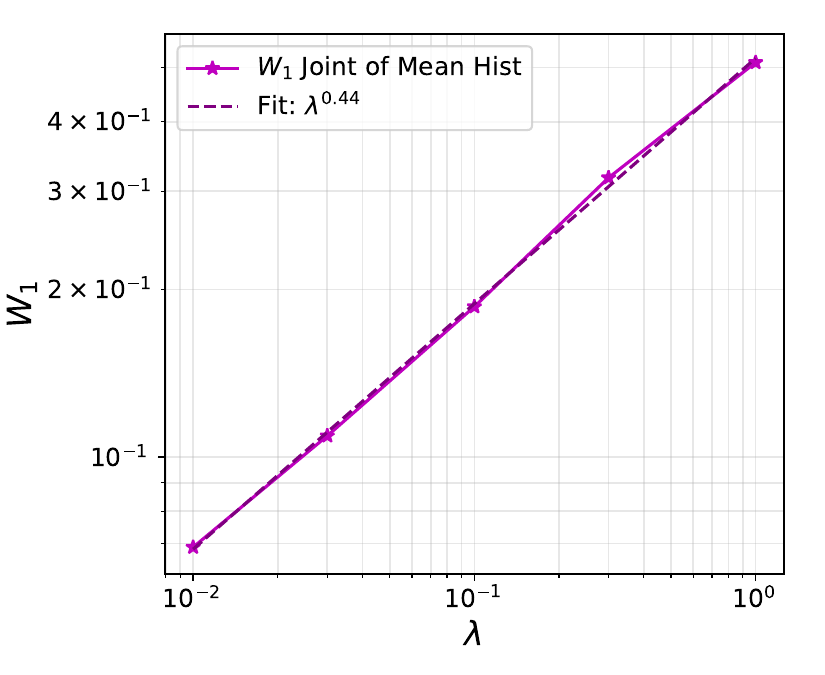}
	\caption{Time-asymptotic Lasry--Lions divergence $D_{\text{LL}}$ and Wasserstein distance $W_1$ as a function of the learning rate $\lambda$ in the linear--quadratic case.}
	\label{fig:lq_scaling}
\end{figure}

\begin{figure}[htbp]
	\centering
	\begin{tabular}{c@{\hspace{2pt}}cccc}
		(a)                                                                                                                                                              & \includegraphics[width=0.22\textwidth]{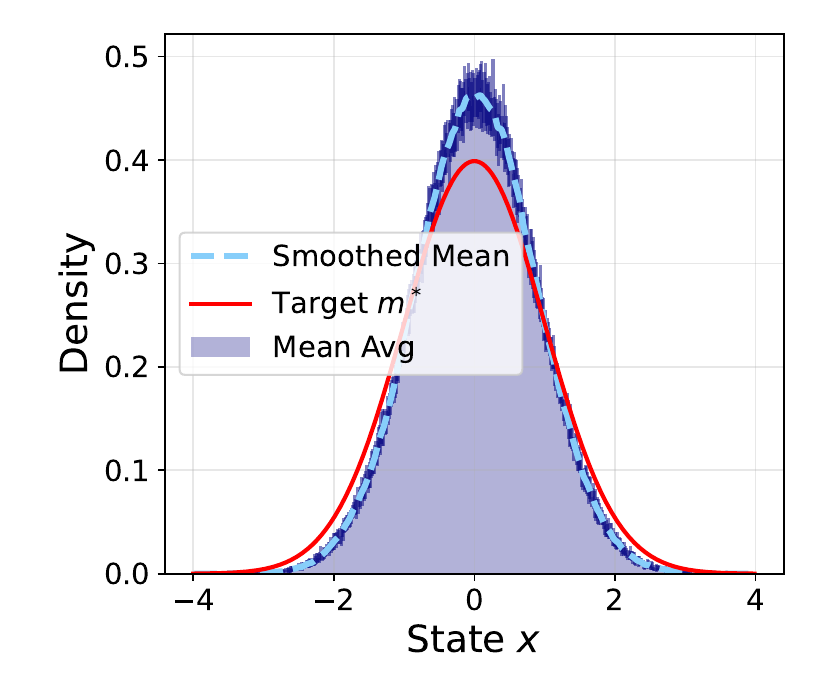}  &
		\includegraphics[width=0.22\textwidth]{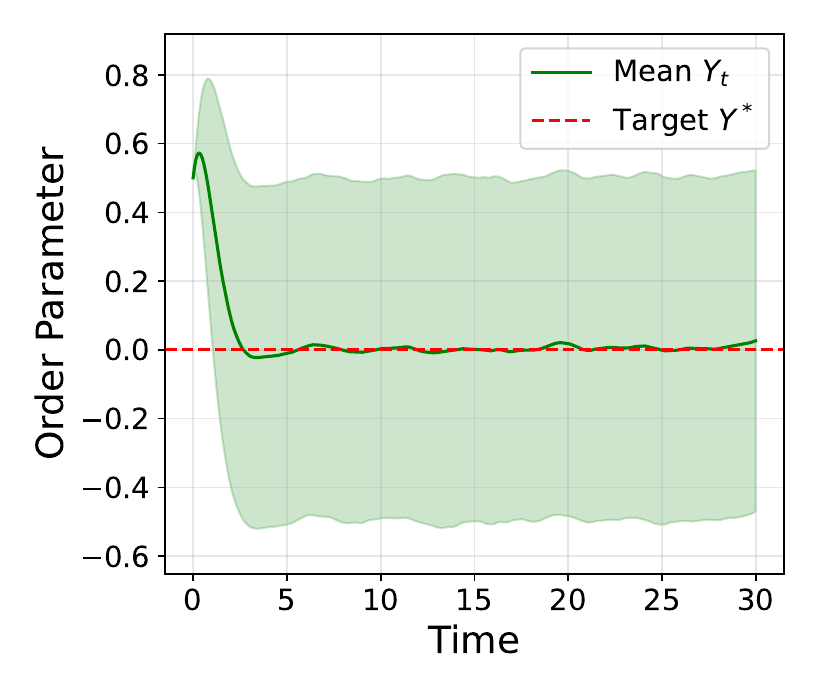}    &
		\includegraphics[width=0.22\textwidth]{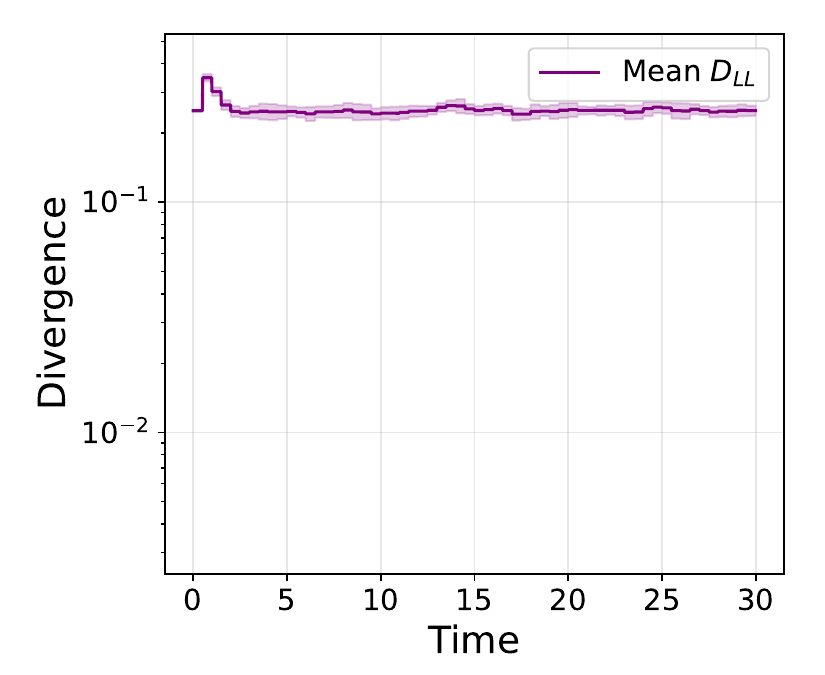}  &
		\includegraphics[width=0.22\textwidth]{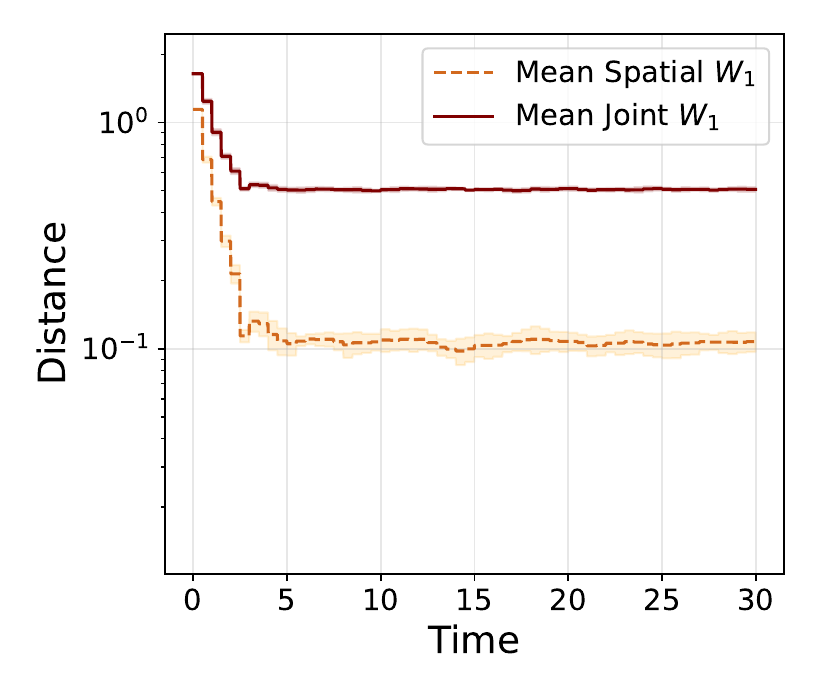}                                                                                                                                                                         \\[1ex]
		(b)                                                                                                                                                              & \includegraphics[width=0.22\textwidth]{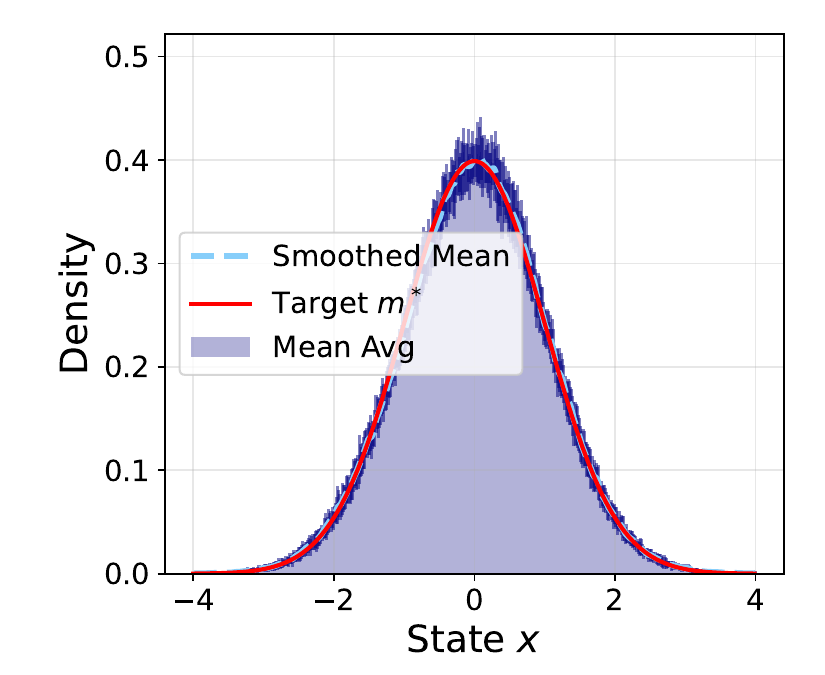} &
		\includegraphics[width=0.22\textwidth]{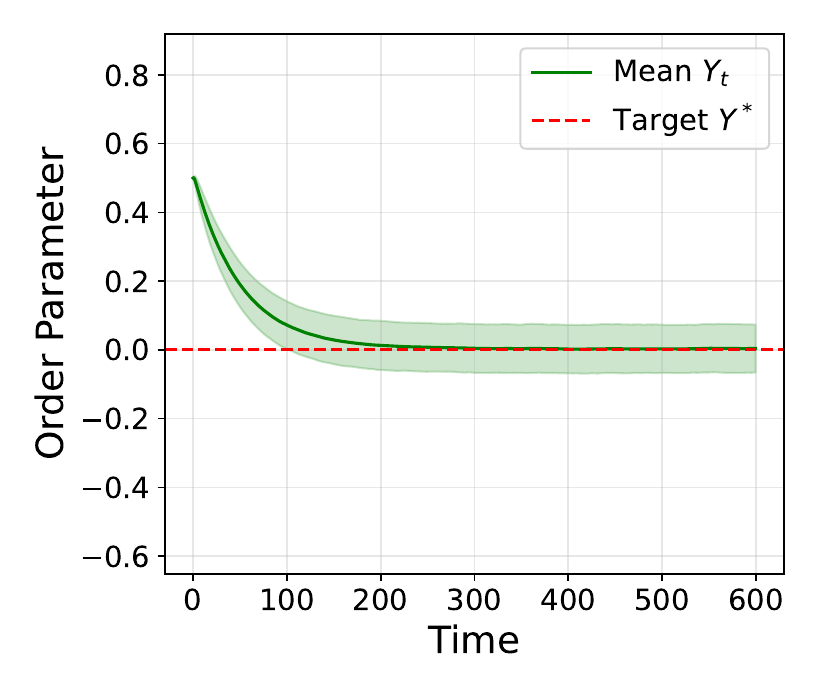}   &
		\includegraphics[width=0.22\textwidth]{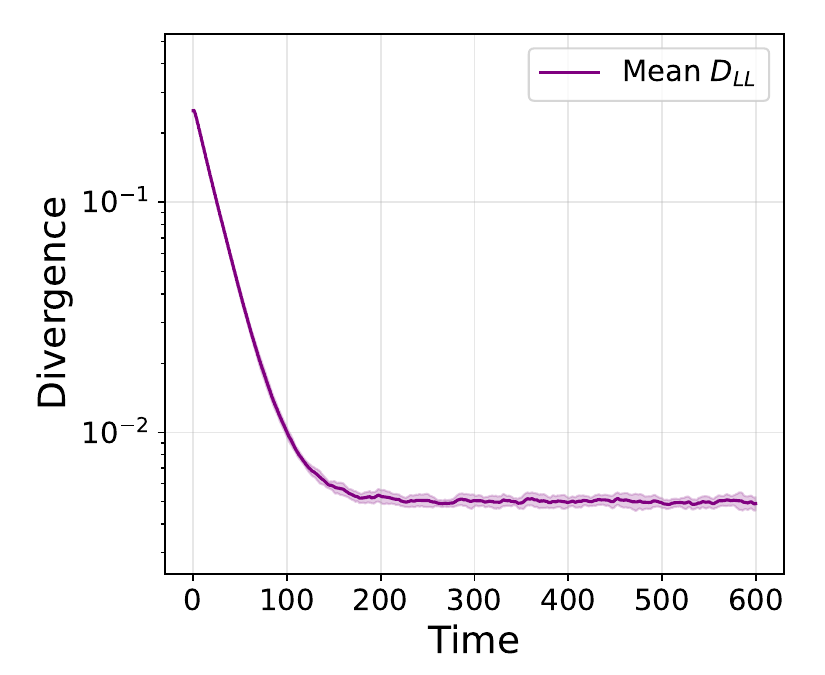} &
		\includegraphics[width=0.22\textwidth]{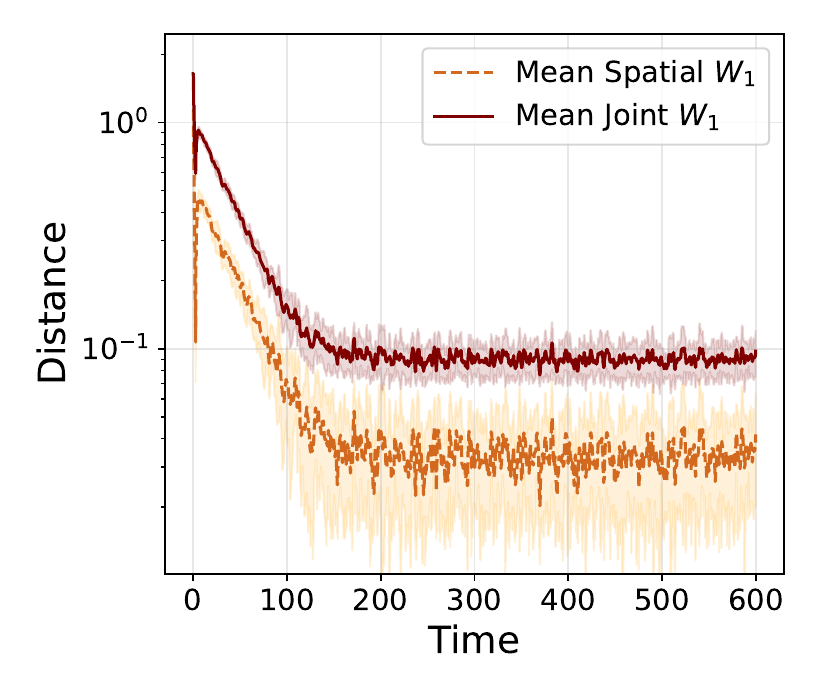}
	\end{tabular}
	\caption{Evolution of the empirical distribution, belief parameter $Y_t$, Lasry--Lions divergence, and Wasserstein distance in the LQ case for: (a) large rate $\lambda=1.0$, and (b) small rate $\lambda=0.01$. Shaded areas indicate one standard deviation across simulation seeds.}
	\label{fig:lq_evol}
\end{figure}

\subsubsection{Cosine model}
Our second numerical test investigates a manufactured example with periodic boundary conditions on $\mathbb{T} = [0,1]$. We engineer the problem such that the explicit analytical solution is $u^*(x) = \cos(2\pi x)$ and the invariant distribution is $m^*(x) \propto \exp(-\cos(2\pi x)/\epsilon)$. Specifically, the interaction cost is chosen as a cosine function augmented by a parameter $\mu$:
\[
	c(x, \alpha) = \frac12 \alpha^2 -\epsilon u^{*\prime\prime}(x) +  \frac{1}{2}\bigl(u^{*\prime}(x)\bigr)^2 - f(m^*, x),
	\qquad f(m, x) = \mu Y \cos(2\pi x),
\]
where $Y = \int \cos(2\pi x) m(dx)$ and $\epsilon$ sets the scale of intrinsic noise.

The parameters for the simulations are indicated in Table~\ref{tab:cosine_params}. 

\begin{table}[htbp]
	\centering
	\begin{tabular}{|l|c|}
		\hline
		\textbf{Parameter}         & \textbf{Value}              \\ \hline
		Population size $N$        & $500$                       \\
		Simulation steps           & $30000$ to $800000$       \\
		Spatial grid size $N_x$    & $500$                       \\
		Timestep $\Delta t$        & $0.001$                     \\
		Noise scale $\epsilon$     & $1.0$                       \\
		Interaction strength $\mu$ & $1.0$                       \\
		Initial belief $Y_0$       & $0.5$                       \\
		Update rates $\lambda$     & $1.0, 0.3, 0.1, 0.03, 0.01$ \\ \hline
	\end{tabular}
	\caption{Parameters for the SFP simulation in the Cosine model.}
	\label{tab:cosine_params}
\end{table}

The resulting errors as a function of the learning rate $\lambda$ are plotted in Figure~\ref{fig:cosine_scaling}. Figure~\ref{fig:cosine_evol} showcases the evolution of the relevant metrics with a high rate ($\lambda=1.0$) versus a lower rate ($\lambda=0.01$). In this example, we observe that even with large $\lambda$, the equilibrium distribution on $X$ is learned very accurately. This is reflected in the fact that the Wasserstein distance is very small even for large $\lambda$. However, joint Wasserstein distance and the Lasry--Lions divergence take large values when $\lambda$ is large, and achieve lower values as $\lambda$ decreases.

\begin{figure}[htbp]
	\centering
	\includegraphics[width=0.48\textwidth]{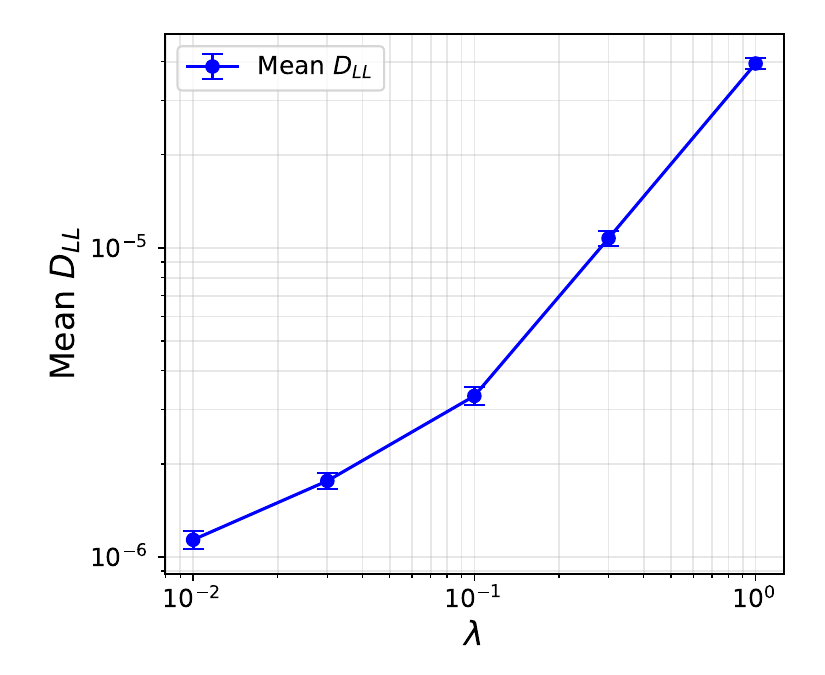} \hfill
	\includegraphics[width=0.48\textwidth]{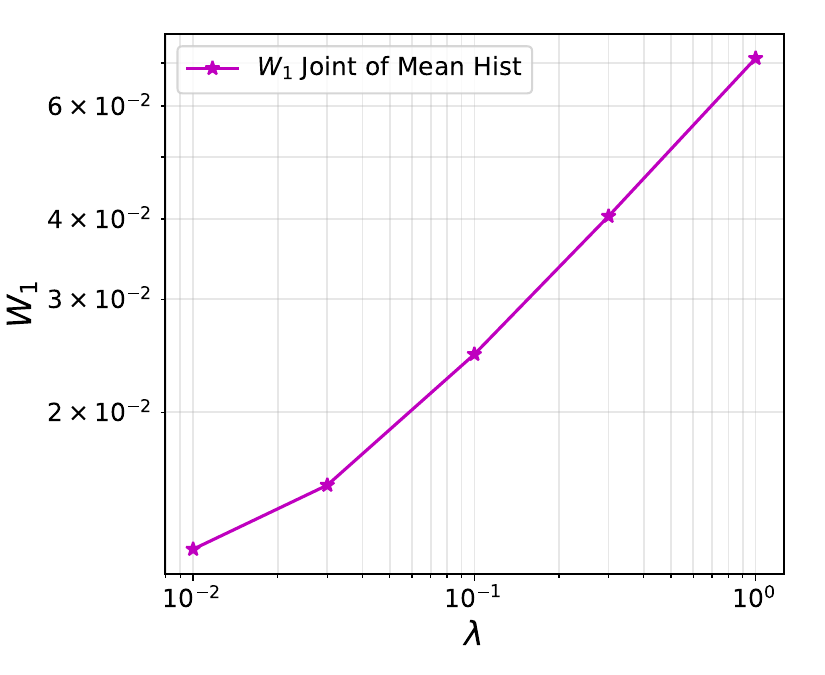}
	\caption{Time-asymptotic Lasry--Lions divergence $D_{\text{LL}}$ and Wasserstein distance $W_1$ as a function of $\lambda$ in the Cosine model.}
	\label{fig:cosine_scaling}
\end{figure}

\begin{figure}[htbp]
	\centering
	\begin{tabular}{c@{\hspace{2pt}}cccc}
		(a)                                                                                                                                                                   & \includegraphics[width=0.22\textwidth]{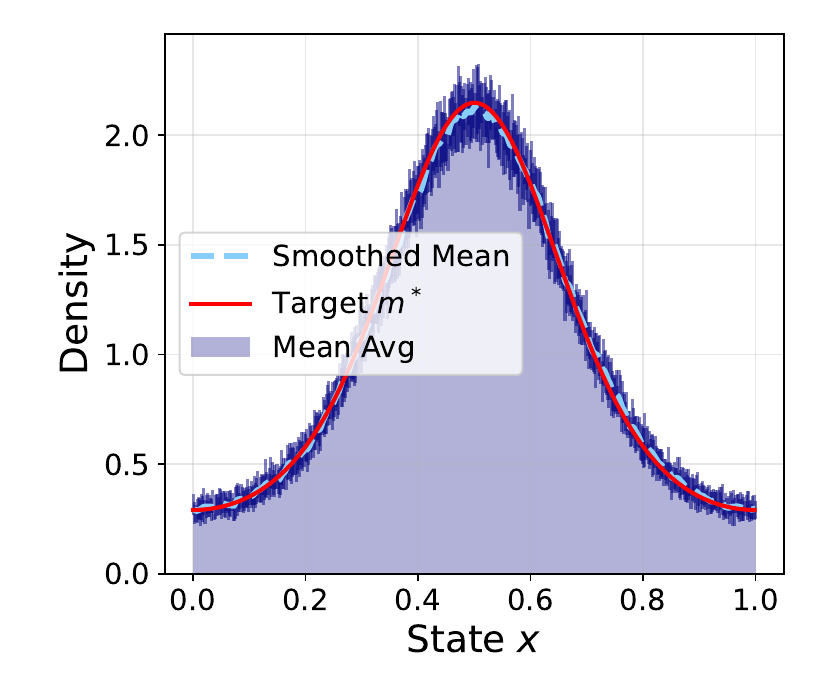}  &
		\includegraphics[width=0.22\textwidth]{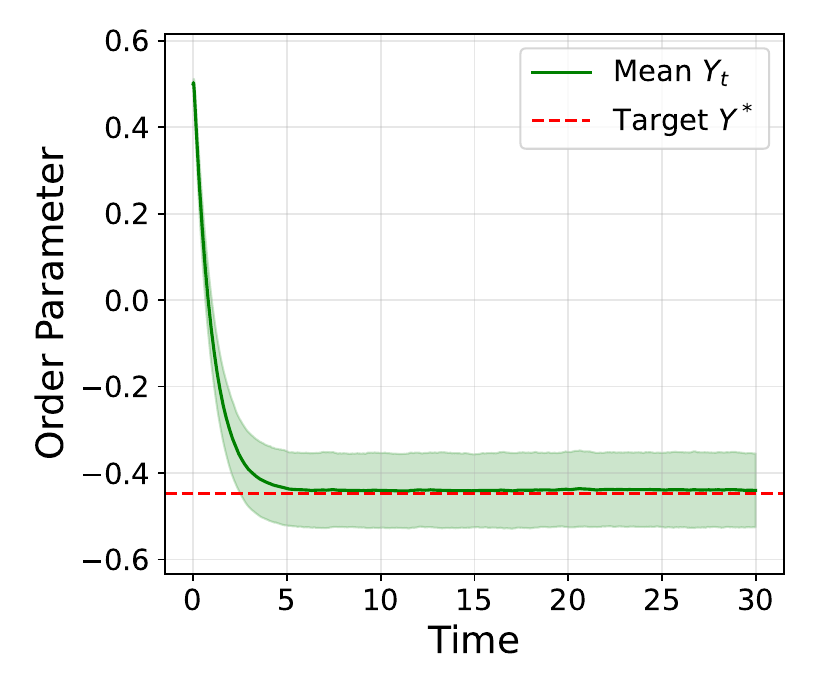}    &
		\includegraphics[width=0.22\textwidth]{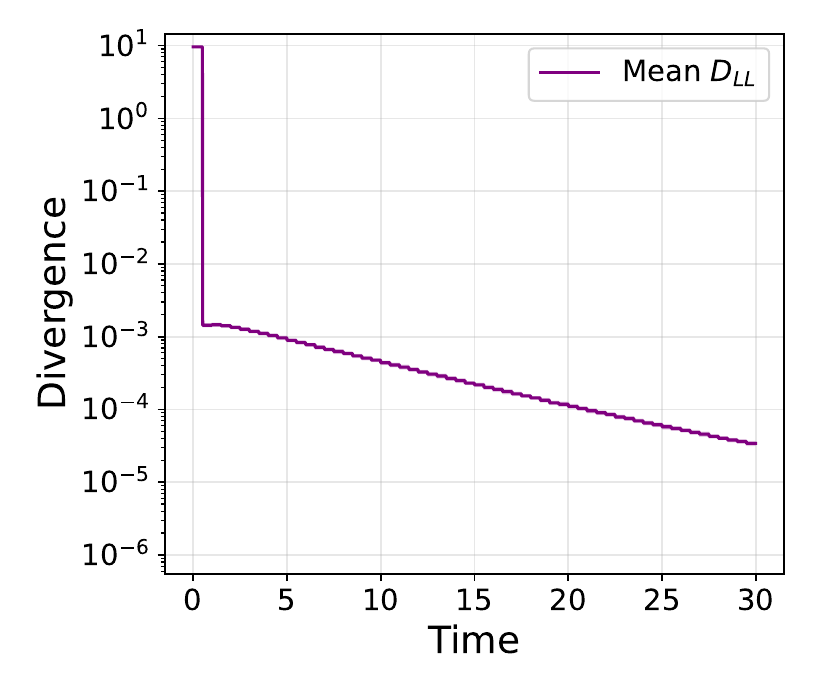}  &
		\includegraphics[width=0.22\textwidth]{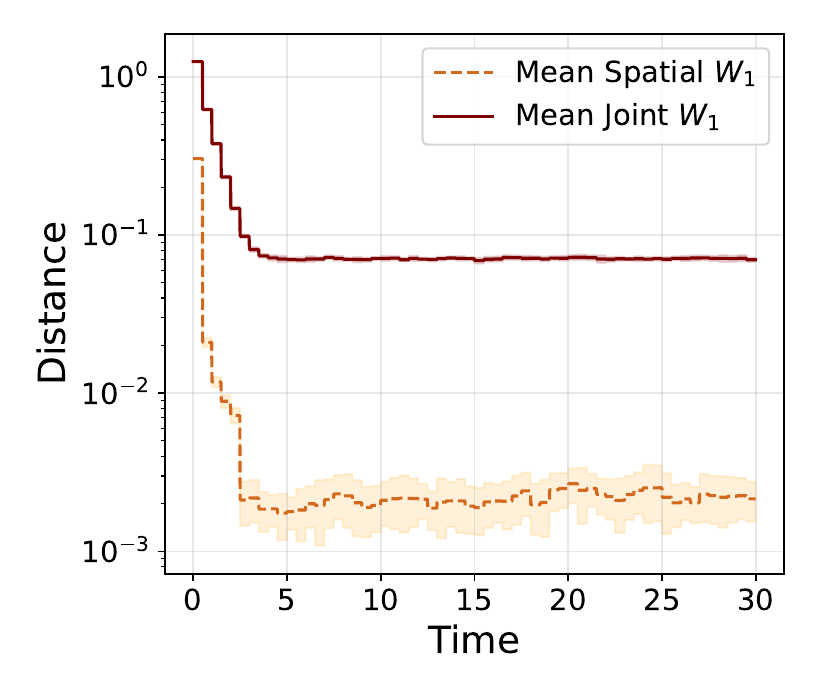}                                                                                                                                                                              \\[1ex]
		(b)                                                                                                                                                                   & \includegraphics[width=0.22\textwidth]{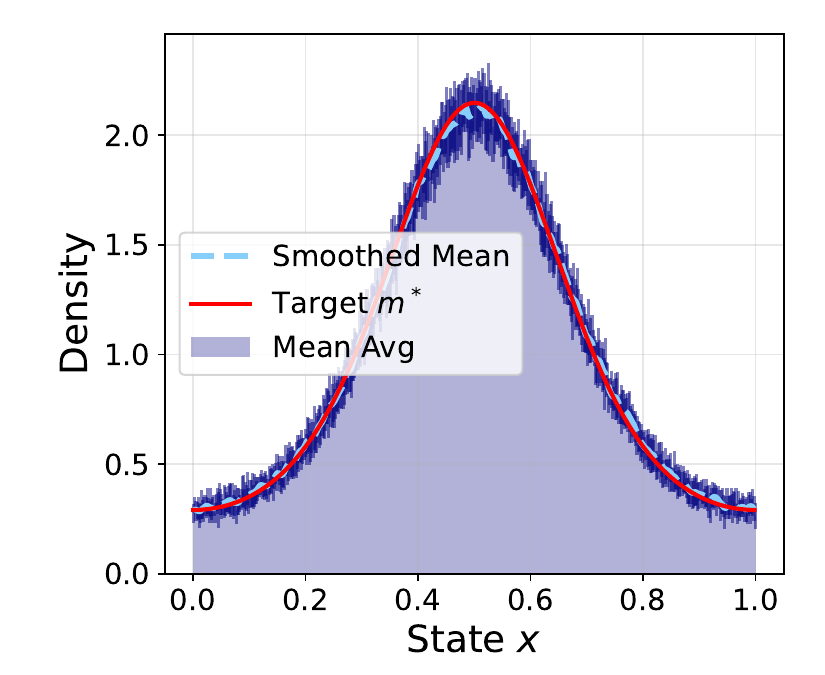} &
		\includegraphics[width=0.22\textwidth]{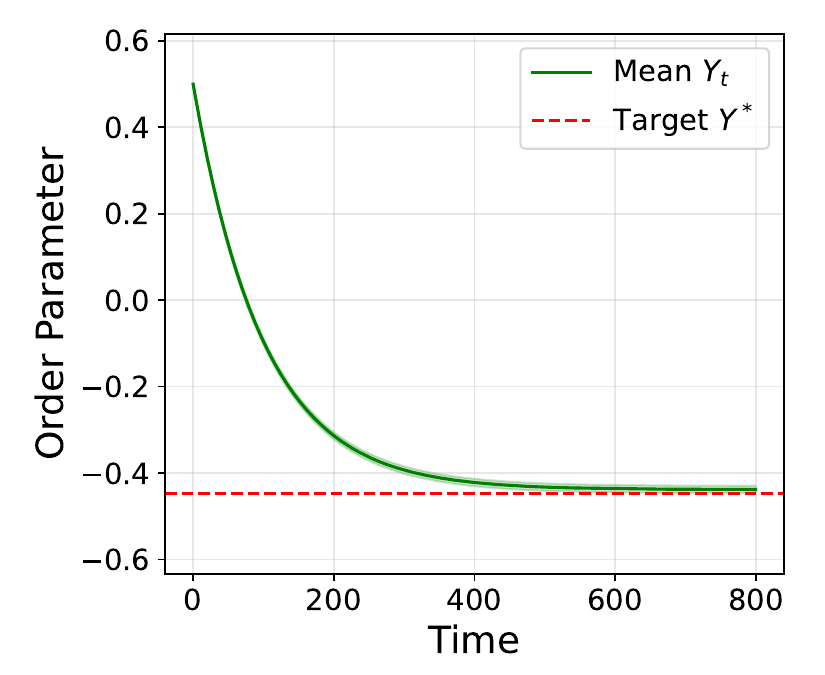}   &
		\includegraphics[width=0.22\textwidth]{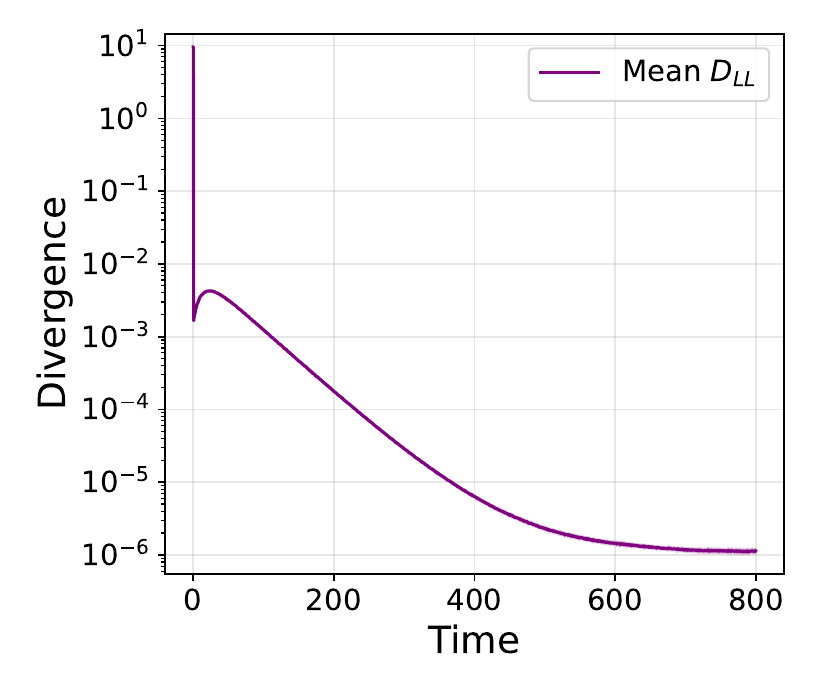} &
		\includegraphics[width=0.22\textwidth]{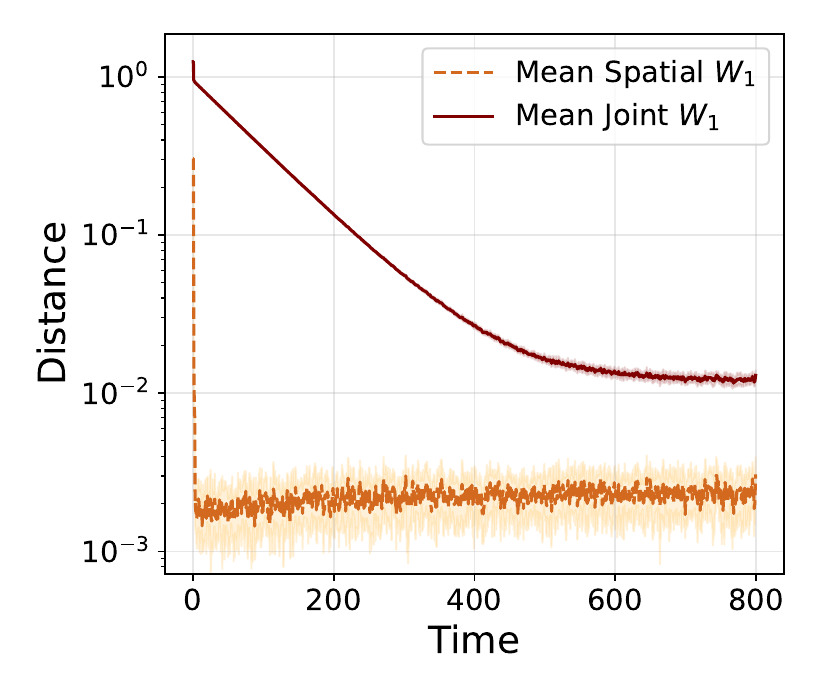}
	\end{tabular}
	\caption{Evolution of the model quantities in the Cosine model for: (a) large rate $\lambda=1.0$, and (b) small rate $\lambda=0.01$. Shaded areas indicate one standard deviation across simulation seeds.}
	\label{fig:cosine_evol}
\end{figure}

\section{Proofs}\label{section-proof}

\subsection{Regularity estimates}\label{subsec:proof:reg}

This section establishes the regularity of the drift field
\[
	(x,y)\longmapsto b^y(x):=H_p\bigl(x,\nabla u^y(x)\bigr),
\]
which is the key input for the coupling arguments used later in the analysis of the SFP dynamics~\eqref{eq-SFP-XY}. The goal is to prove Lemma~\ref{lem-regularity}: uniform boundedness and joint Lipschitz continuity of $b^y(x)$ in $(x,y)$.

\medskip
\paragraph{Finite-horizon auxiliary problem.}
Fix $T>0$ and $y\in\mathbb{R}^D$. Consider the finite-horizon stochastic control problem
\begin{equation}\label{eq-T-mfg}
	u^{T,y}(t,x)
	=
	\inf_{\alpha}\;
	\mathbb{E}\Bigl[
		\int_t^T \Bigl(c(X_s,\alpha_s)+V(X_s,y)\Bigr)\,ds
		\;\Big|\; X_t=x
		\Bigr],
	\qquad
	dX_s=\alpha_s\,ds+\sqrt2\,dW_s,
\end{equation}
with terminal cost $u^{T,y}(T,\cdot)\equiv0$. (We emphasize that $V(x,y)=\nabla\Phi(y)\cdot \ell(x)$ and is uniformly bounded and Lipschitz in $x$, uniformly over bounded $y$.) The value function $u^{T,y}$ solves the HJB equation
\begin{equation}\label{eq-T-hjb}
	\partial_t u^{T,y}(t,x)+\Delta u^{T,y}(t,x)+H\bigl(x,\nabla_x u^{T,y}(t,x)\bigr)+V(x,y)=0,
	\qquad
	u^{T,y}(T,x)=0.
\end{equation}

\begin{lem}[Uniform bound on $\nabla_x u^{T,y}$]\label{lem-gradiantu-bound}
	Assume Assumptions~\ref{ass-potential} and~\ref{ass-cost}. There exists a constant $C_{\nabla u}>0$ such that for every horizon $T>0$, every $y\in\mathbb{R}^D$, and every $t\in[0,T]$,
	\[
		\|\nabla_x u^{T,y}(t,\cdot)\|_\infty \le C_{\nabla u}.
	\]
	The constant $C_{\nabla u}$ depends only on $(\kappa_0^c,\kappa_1^c,\kappa_2^c,\kappa_3^c,\theta_c,C_\ell,C_\Phi)$.
\end{lem}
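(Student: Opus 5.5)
Our plan is to get a gradient bound uniform in the horizon by a pathwise (synchronous coupling) argument on the control problem~\eqref{eq-T-mfg}, which avoids PDE maximum-principle computations on the torus. Write $L(x,\alpha):=c(x,\alpha)+V(x,y)$. Since $V(x,y)=\nabla\Phi(y)\cdot\ell(x)$ and we only need $y$ in a bounded set (on which $\|\nabla\Phi\|\le C_\Phi$ by Assumption~\ref{ass-potential}), $L(\cdot,\alpha)$ is Lipschitz in $x$ with constant $\kappa_1^c+C_\Phi C_\ell$, uniformly in $\alpha$ and $y$. A horizon-independent bound cannot come from this alone. Shifting the initial point for the whole horizon gives $|u^{T,y}(t,x)-u^{T,y}(t,x')|\le (T-t)(\kappa_1^c+C_\Phi C_\ell)|x-x'|$, which grows with $T-t$. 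So the argument splits into a short-time bound and an oscillation bound.

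\textbf{Step 1 (short horizons).} For $T-t\le 1$ the naive coupling above directly gives the Lipschitz bound $\kappa_1^c+C_\Phi C_\ell$.

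\textbf{Step 2 (uniform oscillation bound).} For $T-t\ge1$ we show $\operatorname{osc}_x u^{T,y}(t,\cdot)\le K$ with $K$ independent of $T$. Take the optimal feedback $\hat\alpha$ from $x'$, which is bounded along optimal paths once a bound on $\nabla u$ is available; to avoid circularity we argue with an arbitrary near-optimal control. Starting from $x$, use the control $\hat\alpha_s+\beta_s$ on $[t,t+1]$, where $\beta$ is a deterministic drift that steers the difference of the two noise-coupled torus paths to zero in unit time (e.g.\ $\beta_s=x'-x$ along a minimal geodesic of length $\le\sqrt d/2$). Then copy $\hat\alpha$ afterwards. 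The extra cost is controlled by strong convexity, $c(x,\alpha+\beta)-c(x,\alpha)\le c_\alpha(x,\alpha)\cdot\beta+\tfrac{\kappa_2^c}2|\beta|^2$. A bound on $c_\alpha$ along optimal controls is needed here, and it follows from $|c|\le\kappa_0^c$ (Remark~\ref{rem:cost}) combined with strong convexity: a bounded convex function on $\mathbb{R}^d$ with Hessian $\ge 1/\kappa_2^c$ is impossible unless restricted. This suggests that optimal controls should be sought in the bounded region where $c$ is near its minimum.

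A safer route is a PDE alternative. Use the Lipschitz-in-$x$ bound on $c$ and the Bernstein method, with Assumption~\ref{ass-cost}(iii) controlling $H_{xp},H_{xx}$ through $c_{x\alpha},c_{xx}$, via $H_x=c_x$ and $H_{xp}=-c_{x\alpha}c_{\alpha\alpha}^{-1}$ evaluated at the optimizer $\alpha=H_p$. The $\theta_c<1$ growth is exactly what makes the Bernstein computation on $w=|\nabla u|^2$ close.

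\textbf{Step 3 (Bernstein estimate, main obstacle).} Differentiate \eqref{eq-T-hjb} and set $w=\tfrac12|\nabla u|^2$. The equation becomes
\[
\partial_t w+\Delta w+H_p\cdot\nabla w=|\nabla^2u|^2-\nabla u\cdot(H_x+\nabla_x V).
\]
With $|H_p|\lesssim 1+|\nabla u|$ and $|H_x|\le\kappa_1^c$, we then work with $z=w+\gamma u$ or $e^{\gamma u}w$. The key step is a localisation in time: apply the maximum principle on windows $[t,t+1]$ with a cutoff $\zeta(s)$. At an interior maximum, $|\nabla^2u|^2\ge (\Delta u)^2/d=(\partial_tu+H+V)^2/d$. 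The coercivity $|H(x,p)|\gtrsim |p|^2/\kappa_2^c-C$ then yields $|\nabla u|^4\lesssim |\nabla u|^{2+2\theta_c}+\ldots$, i.e.\ a bound on $|\nabla u|$ depending only on the listed constants and on $\|\partial_t u\|$. Here $\partial_tu$ is bounded uniformly in $T$ by differentiating in time and comparing, since $\partial_t u$ solves a linear equation with terminal value $-\min_\alpha L$, hence is bounded by $\kappa_0^c+C_\Phi C_\ell$. Combining the local-in-time estimate with Step 1 near $T$ gives $C_{\nabla u}$ independent of $T$, $t$ and $y$. Closing the cross terms via $\theta_c<1$ is the expected hard part.
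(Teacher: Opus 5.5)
\textbf{Verdict: your proof is correct, by a different route from the paper's.} Set aside the abandoned Step~2. Your final argument (Step~3) is a valid Bernstein argument, but a genuinely different one.

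\textbf{The paper's route.} It works at \emph{second} order:
\begin{itemize}
\item it applies the maximum principle to $\partial^2_{\xi\xi}u$ on all of $[0,T]\times\mathbb{T}^d$;
\item it uses $H_{pp}\preceq -I/\kappa_2^c$ to produce the good term $-|\nabla u_\xi|^2/\kappa_2^c$;
\item it needs Assumption~\ref{ass-cost}(iii) to bound $H_{xp}$ and $H_{xx}$ by $(1+|p|)^{\theta_c}$ and $(1+|p|)^{1+\theta_c}$, which is where $\theta_c<1$ closes the estimate;
\item it converts the semiconcavity bound into a gradient bound via $\|\nabla\varphi\|_\infty\le\sqrt d\,[\varphi]_{\mathrm{sc}}$ on the torus.
\end{itemize}

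\textbf{Your route.} You work at \emph{first} order on $w=\tfrac12|\nabla u|^2$:
\begin{itemize}
\item the good term is $|\nabla^2u|^2\ge(\Delta u)^2/d$;
\item the equation turns $\Delta u$ into $-(\partial_t u+H+V)$;
\item the upper bound $c_{\alpha\alpha}\le\kappa_2^c I$ gives $-H(x,p)\ge |p|^2/(4\kappa_2^c)-C$;
\item the one extra input is the horizon-free bound $\|\partial_t u\|_\infty\le\sup_x|H(x,0)+V(x,y)|$. You obtain it correctly from the maximum principle for the linear equation satisfied by $\partial_t u$, whose terminal value is $-\min_\alpha L$.
\end{itemize}

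\textbf{What each route buys.} In yours, only $|H_x|=|c_x(x,\alpha^*)|\le\kappa_1^c$ enters. Assumption~\ref{ass-cost}(iii) is therefore not used at all, which is exactly the improvement anticipated in Remark~\ref{rem:cost}(2). You also differentiate the equation only once. The paper's route needs no control of $\partial_t u$, and it yields a uniform semiconcavity bound as a by-product.

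\textbf{Corrections to your write-up.}
\begin{itemize}
\item Your claims that $\theta_c<1$ is what closes the computation, and that you reach $|\nabla u|^4\lesssim|\nabla u|^{2+2\theta_c}$, are wrong. The only lower-order term is $\nabla u\cdot(H_x+\nabla_xV)$, which is linear in $|\nabla u|$. There are no cross terms to close; the real work is the $\partial_t u$ bound, which you already have.
\item Drop the variants $w+\gamma u$ and $e^{\gamma u}w$. They would make the bound depend on $\mathrm{osc}_x u(t,\cdot)$, which is precisely what you could not control uniformly in $T$ in Step~2.
\item The time cutoff and Step~1 are unnecessary. Since $w(T,\cdot)=0$, take a maximum point of $w$ on $[0,T]\times\mathbb{T}^d$ with $t_*<T$. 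There $\partial_t w\le 0$, $\Delta w\le0$ and $\nabla w=0$, so $|\nabla^2u|^2\le(\kappa_1^c+C_\Phi C_\ell)|\nabla u|$. Combined with the lower bound on $|\nabla^2 u|^2$, this bounds $|\nabla u(t_*,x_*)|$, and hence $\|\nabla u\|_\infty$, directly.
\end{itemize}

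\textbf{On $\kappa_0^c$.} You are right that $\sup_{x,\alpha}|c|\le\kappa_0^c$ is incompatible with strong convexity. Your argument needs only $\sup_x|c(x,0)|$ and $\sup_x|c_\alpha(x,0)|$, both finite by compactness of $\mathbb{T}^d$. The paper's proof in fact uses the same quantities.
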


Lemma~\ref{lem-gradiantu-bound} is a direct consequence of Lemma~1.5 in~\cite{CardaliaguetPorretta2019ergodicMaster}. For completeness we provide a proof in Appendix~\ref{app-lem}.

\begin{cor}[Uniform boundedness of $H_p(x,\nabla u^{T,y})$]\label{cor-Hp-bounded}
	Assume Assumptions~\ref{ass-potential} and~\ref{ass-cost}. Then
	\[
		\sup_{T>0}\;\sup_{t\in[0,T]}\;\sup_{x\in\mathbb{T}^d}\;\sup_{y\in\mathbb{R}^D}
		\bigl|H_p\bigl(x,\nabla_x u^{T,y}(t,x)\bigr)\bigr|
		\le C_b,
	\]
	for a constant $C_b$ depending only on $(\kappa_1^c,\kappa_2^c,C_{\nabla u})$ (hence ultimately on the parameters in Lemma~\ref{lem-gradiantu-bound}).
\end{cor}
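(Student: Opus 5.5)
The plan is to convert the gradient bound of Lemma~\ref{lem-gradiantu-bound} into a bound on $H_p$ using only the Legendre structure of $H$ coming from Assumption~\ref{ass-cost}. For fixed $x$, the map $\alpha\mapsto \alpha\cdot p + c(x,\alpha)$ is strongly convex by Assumption~\ref{ass-cost}\textup{(ii)}, so the infimum defining $H(x,p)$ is attained at a unique point $\alpha^*(x,p)$. This point is characterized by the first-order condition $p + c_\alpha(x,\alpha^*(x,p))=0$. By the envelope theorem (or by differentiating through the implicit function theorem, which applies because $c_{\alpha\alpha}$ is invertible), $H$ is $C^1$ in $p$ and
\[
H_p(x,p)=\alpha^*(x,p).
\]
It is therefore enough to bound $|\alpha^*(x,p)|$ uniformly over $x\in\mathbb{T}^d$ and $|p|\le C_{\nabla u}$.

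For this, I use strong monotonicity of $c_\alpha$. From (ii),
\[
(c_\alpha(x,\alpha)-c_\alpha(x,0))\cdot\alpha \ge \tfrac{1}{\kappa_2^c}|\alpha|^2,
\]
which gives $|\alpha^*|\le \kappa_2^c\,|c_\alpha(x,\alpha^*)-c_\alpha(x,0)| \le \kappa_2^c\,(|p|+|c_\alpha(x,0)|)$.

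The step that needs care is bounding $\sup_x|c_\alpha(x,0)|$ using only the listed constants. I would proceed as follows. Since $c$ is bounded by $\kappa_0^c$ (Remark~\ref{rem:cost}) and $c(x,\cdot)$ is strongly convex, the inequality
\[
c(x,\alpha)\ge c(x,0)+c_\alpha(x,0)\cdot\alpha+\tfrac{1}{2\kappa_2^c}|\alpha|^2
\]
with $\alpha=-t\,c_\alpha(x,0)/|c_\alpha(x,0)|$ gives $2\kappa_0^c\ge t|c_\alpha(x,0)| - t^2/(2\kappa_2^c)$. Choosing $t=1$ yields $|c_\alpha(x,0)|\le 2\kappa_0^c+1/(2\kappa_2^c)$. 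Alternatively, boundedness of $c$ together with quadratic growth is already a tension that forces the relevant bounds. Since $\kappa_0^c$ is controlled by $\kappa_1^c$ together with the value at one point, the resulting constant is of the stated form.

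Combining these estimates with Lemma~\ref{lem-gradiantu-bound}, I obtain
\[
|H_p(x,\nabla_x u^{T,y}(t,x))|\le \kappa_2^c\bigl(C_{\nabla u}+2\kappa_0^c+1/(2\kappa_2^c)\bigr)=:C_b,
\]
uniformly in $T$, $t$, $x$ and $y$. The uniformity in $y$ is inherited directly from Lemma~\ref{lem-gradiantu-bound}, whose constant does not depend on $y$.
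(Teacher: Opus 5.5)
Your reduction is the same as the paper's. The envelope theorem gives $H_p(x,p)=\alpha^*(x,p)$, and the first-order condition together with strong monotonicity of $c_\alpha(x,\cdot)$ gives $|\alpha^*(x,p)|\le\kappa_2^c\bigl(|p|+|c_\alpha(x,0)|\bigr)$.

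The gap is in the last step, the bound on $\sup_x|c_\alpha(x,0)|$. You rely on $\kappa_0^c=\sup_{x,\alpha}|c(x,\alpha)|$ from Remark~\ref{rem:cost}, but no finite constant of this kind can exist under Assumption~\ref{ass-cost}(ii). Strong convexity forces $c(x,\alpha)\ge c(x,0)+c_\alpha(x,0)\cdot\alpha+|\alpha|^2/(2\kappa_2^c)\to\infty$ as $|\alpha|\to\infty$. The ``tension'' you noticed is an outright contradiction, and an estimate derived from inconsistent hypotheses proves nothing.

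Your displayed computation also runs the wrong way. With $\alpha=-t\,c_\alpha(x,0)/|c_\alpha(x,0)|$, the lower Hessian bound gives a \emph{lower} bound on $|c_\alpha(x,0)|$. Taking $\alpha=+t\,c_\alpha(x,0)/|c_\alpha(x,0)|$ instead gives $t|c_\alpha(x,0)|+t^2/(2\kappa_2^c)\le 2\kappa_0^c$ for every $t>0$, which is exactly the contradiction. Likewise, $\kappa_0^c$ is not controlled by $\kappa_1^c$ plus a value at one point, because $\sup_\alpha|c(x_0,\alpha)|=\infty$.

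The repair is simple. Since $c\in C^2$ and $\mathbb{T}^d$ is compact, $C_{0,\alpha}:=\sup_{x}|c_\alpha(x,0)|<\infty$; this is the constant used in Appendix~\ref{app-lem}. By Assumption~\ref{ass-cost}(iii), its oscillation in $x$ is at most $\kappa_3^c\sqrt d/2$. Then $C_b:=\kappa_2^c\bigl(C_{\nabla u}+C_{0,\alpha}\bigr)$ works uniformly in $(T,t,x,y)$.

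Note that $C_b$ genuinely depends on $C_{0,\alpha}$ and cannot be expressed through $(\kappa_1^c,\kappa_2^c,C_{\nabla u})$ alone. Take $c(x,\alpha)=\tfrac12|\alpha-a|^2$ with a constant vector $a$, and $\Phi\equiv0$. The cost is independent of $x$ and $\kappa_2^c=1$, and $u^{T,y}\equiv0$ because $H(x,0)=0$. Yet $H_p(x,\nabla_x u^{T,y})\equiv a$ can be arbitrarily large.

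The paper's own proof has the same weak spot. It replaces $|c_\alpha(x,0)|$ by $\kappa_1^c$, which Assumption~\ref{ass-cost}(i) does not justify: (i) controls increments of $c$ in $x$, not its $\alpha$-gradient.
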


\begin{proof}
	For $(x,p)\in\mathbb{T}^d\times\mathbb{R}^d$, define the unique minimizer
	\[
		\alpha^*(x,p):=\arg\min_{\alpha\in\mathbb{R}^d}\{\alpha\cdot p+c(x,\alpha)\}.
	\]
	By strong convexity of $\alpha\mapsto c(x,\alpha)$ (Assumption~\ref{ass-cost}\textup{(ii)}), $\alpha^*(x,p)$ is well-defined and
	the envelope theorem gives
	\begin{equation}\label{eq-Hp-env}
		H_p(x,p)=\alpha^*(x,p).
	\end{equation}
	The first-order optimality condition reads
	\[
		p+c_\alpha\bigl(x,\alpha^*(x,p)\bigr)=0.
	\]
	Using strong convexity, for any $\alpha$ we have the standard estimate
	\[
		|c_\alpha(x,\alpha)|
		\ge \frac{1}{\kappa_2^c}|\alpha|-|c_\alpha(x,0)|.
	\]
	Taking $\alpha=\alpha^*(x,p)$ and using $c_\alpha(x,\alpha^*(x,p))=-p$ yields
	\[
		\frac{1}{\kappa_2^c}|\alpha^*(x,p)|-|c_\alpha(x,0)|
		\le |p|,
		\qquad\text{hence}\qquad
		|\alpha^*(x,p)|
		\le \kappa_2^c\bigl(|p|+\kappa_1^c\bigr).
	\]
	Combining \eqref{eq-Hp-env} with Lemma~\ref{lem-gradiantu-bound} gives
	\[
		\bigl|H_p\bigl(x,\nabla_x u^{T,y}(t,x)\bigr)\bigr|
		\le \kappa_2^c\bigl(C_{\nabla u}+\kappa_{1}^c\bigr)
		=:C_b,
	\]
	uniformly in $(T,t,x,y)$.
\end{proof}

\paragraph{A reflection coupling estimate.}
We next introduce the reflection coupling used to control differences of trajectories starting from different initial conditions. This estimate will later enter the Lipschitz bounds for $\nabla_x u^{T,y}(0,\cdot)$.

Fix $T>0$ and $y\in\mathbb{R}^D$ and define the time-dependent drift
\[
	b_t(x):=H_p\bigl(x,\nabla_x u^{T,y}(t,x)\bigr).
\]
Let $(X_t,\widehat X_t)$ be the reflection coupling defined up to the coupling time
\[
	T_0:=\inf\{t\ge0:\ X_t=\widehat X_t\}.
\]
For $t<T_0$,
\begin{equation}\label{eq-coupling1}
	\begin{aligned}
		dX_t          & = b_t(X_t)\,dt+\sqrt2\,dW_t,                   \\
		d\widehat X_t & = b_t(\widehat X_t)\,dt+\sqrt2\,d\widehat W_t,
	\end{aligned}
	\qquad
	d\widehat W_t=(I_d-2e_t e_t^\top)\,dW_t,
	\qquad
	e_t=\frac{X_t-\widehat X_t}{|X_t-\widehat X_t|}.
\end{equation}
For $t\ge T_0$ we set $\widehat X_t=X_t$.

\begin{lem}[Coupling probability bound]\label{lem-ref-coupling1}
	Assume Assumptions~\ref{ass-potential} and~\ref{ass-cost}. Let $(X_t,\widehat X_t)$ be the reflection coupling~\eqref{eq-coupling1} with coupling time
	\[
		T_0:=\inf\{t\ge0:\ X_t=\widehat X_t\}.
	\]
	Then there exists a deterministic function $q:[0,\infty)\to[0,\infty)$ such that for all $t\ge0$,
	\[
		\mathbb{P}(X_t\neq \widehat X_t)=\mathbb{P}(t<T_0)\le q_t\,\mathbb{E}|X_0-\widehat X_0|.
	\]
	Moreover, $q_t$ decays exponentially fast as $t\to\infty$. In particular $q\in L^1(\mathbb{R}_+)$ and there exists a constant
	$C_q>0$, depending only on $(C_b,d)$, such that
	\[
		\int_0^T q_t\,dt\le \int_0^\infty q_t\,dt\le C_q,\qquad \forall\,T\ge0.
	\]
\end{lem}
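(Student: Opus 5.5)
The plan is to follow Eberle's reflection-coupling contraction on the torus, working with the distance process $r_t := |X_t - \widehat X_t|$ on $\mathbb{T}^d$. Here $|\cdot|$ is the geodesic distance, and the reflection is taken along the minimizing geodesic direction. This is well defined as long as $r_t$ stays below half the injectivity radius; otherwise we use the standard convention of switching to synchronous coupling near the cut locus, or equivalently lifting to $\mathbb{R}^d$ and reflecting on the nearest representative.

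\textbf{Step 1: semimartingale decomposition of $r_t$.} Before $T_0$, It\^o's formula together with \eqref{eq-coupling1} gives
\[
dr_t = e_t\cdot\bigl(b_t(X_t)-b_t(\widehat X_t)\bigr)\,dt + 2\sqrt2\, dB_t ,
\]
where $B_t=\int_0^t e_s\cdot dW_s$ is a one-dimensional Brownian motion. The second-order terms cancel because the two noises differ exactly by a reflection in the direction $e_t$. By Corollary~\ref{cor-Hp-bounded}, the drift term is bounded by $2C_b$, uniformly in $T$, $y$ and $t$. The key point is that we use only this bound and not any Lipschitz constant of $b_t$. This matters because the Lipschitz bound is exactly what the later argument wants to derive, so using it here would be circular. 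Hence $r_t\le \tilde r_t$ up to $T_0$, where $\tilde r$ is a Brownian motion with drift $2C_b$ and diffusion coefficient $2\sqrt2$. On the torus, $\tilde r$ is additionally kept below $\mathrm{diam}(\mathbb{T}^d)$; this requires controlling the cut-locus behaviour, where the geodesic distance only has a nonpositive singular part.

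\textbf{Step 2: a concave Lyapunov function.} Following Eberle, we choose a concave increasing $h$ on $[0,R]$, with $R=\mathrm{diam}(\mathbb{T}^d)$, satisfying $h(0)=0$, $h'(0)=1$, and
\[
4h''(r)+2C_b\,h'(r)\le -c_0\,h(r).
\]
Explicitly, take $h(r)=\int_0^r e^{-C_b s/2}\,g(s)\,ds$, where $g$ is a decreasing weight. Then $e^{c_0 t}h(r_{t\wedge T_0})$ is a supermartingale, and so
\[
\mathbb{E}\,h(r_t)\le e^{-c_0 t}\,h(r_0).
\]
Since $h(r)\ge \underline h>0$ for $r$ bounded away from $0$, this alone is not enough; we also need $\{t<T_0\}$ to have small probability.

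\textbf{Step 3: bounding $\mathbb{P}(t<T_0)$ linearly in $r_0$.} This is the main obstacle. The target is
\[
\mathbb{P}(t<T_0)\le q_t\, r_0 .
\]
For small $t$ we compare with the hitting time of $0$ by the drifted Brownian motion $\tilde r$ started from $r_0$. The explicit first-passage density gives
\[
\mathbb{P}(t<T_0)\le \mathbb{P}(\tilde T_0>t)\le C\,\frac{r_0}{\sqrt t}\,e^{C_b^2 t}\quad\text{(roughly)},
\]
which is linear in $r_0$ and integrable near $t=0$. For large $t$ we use the Markov property at time $1$: once $r_1$ is bounded, the probability of not coupling during each subsequent unit interval is at most a fixed $\gamma<1$ uniformly, because $r$ is bounded on the compact torus and the drift is bounded. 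Iterating gives
\[
\mathbb{P}(t<T_0)\le \mathbb{P}(1<T_0)\,\gamma^{\lfloor t\rfloor-1}\le C\,r_0\,\gamma^{t-2}.
\]
Combining the two regimes, we define $q_t:=C\min\{t^{-1/2},1\}\,e^{-c t}$, with constants depending only on $(C_b,d)$. The result then follows by taking expectations over the initial law and computing $C_q=\int_0^\infty q_t\,dt<\infty$. The step I expect to require care is making the linear-in-$r_0$ short-time bound rigorous on the torus, with the cut locus and the comparison to $\tilde r$; I would handle it by lifting to $\mathbb{R}^d$ and using the nearest-image distance, whose drift can only decrease at the cut locus.
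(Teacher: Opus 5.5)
Your argument is essentially the paper's. You bound the drift of $r_t$ by $2C_b$ only, rightly avoiding any Lipschitz input, and dominate $r$ by a drifted Brownian motion to get a bound linear in $r_0$ at short times. You then restart on unit intervals, using $r_t\le\mathrm{diam}(\mathbb{T}^d)$ to get a uniform non-coupling probability $\gamma<1$ and hence geometric decay. Your Step~2 (Eberle's concave $h$) is never used and can be dropped.

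One slip needs fixing: the final choice $q_t=C\min\{t^{-1/2},1\}e^{-ct}$ should be $q_t=C\max\{t^{-1/2},1\}e^{-ct}$. As written, $q$ is bounded near $t=0$, and no such $q$ can work. Take $b\equiv0$ (e.g.\ $c=\tfrac12|\alpha|^2$, $\Phi\equiv0$) and $r_0=\sqrt t$: then $\mathbb{P}(t<T_0)$ stays above a positive constant as $t\to0$, while $q_t r_0\to0$. Your own first-passage estimate $\mathbb{P}(t<T_0)\le Cr_0t^{-1/2}$ on $(0,1]$ supplies exactly the integrable singularity that is needed.

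The paper's proof has the same defect. It sets $q_t=K$ on $[0,1]$ via $\mathbb{P}(t<T_0)\le\mathbb{P}(1<T_0)$, an inequality that goes the wrong way for $t\le1$. Your short-time bound is in fact the repair, and since only $\int_0^\infty q_t\,dt\le C_q$ is used later, nothing downstream changes.

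Your care about the cut locus goes beyond the paper, which ignores it, but your two options are not equivalent. The nearest-image lift works: the distance to $\mathbb{Z}^d$ is semiconcave away from the lattice, so It\^o--Tanaka only adds a nonpositive singular term and $r\le\tilde r$ survives. Switching to synchronous coupling near the cut locus instead removes the martingale part of $r$ there, which breaks the comparison with $\tilde r$.
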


\begin{proof}
	Let $\Delta_t:=X_t-\widehat X_t$ and $r_t:=|\Delta_t|$. For $t<T_0$, It\^o's formula yields (as in the derivation above)
	\[
		dr_t=\langle e_t,\,b_t(X_t)-b_t(\widehat X_t)\rangle\,dt+2\sqrt2\,d\beta_t,
	\]
	where $e_t=\Delta_t/r_t$ and $\beta_t:=\int_0^t\langle e_s,dW_s\rangle$ is a one-dimensional Brownian motion.
	By Corollary~\ref{cor-Hp-bounded}, $|b_t(\cdot)|\le C_b$, hence
	\[
		\langle e_t,\,b_t(X_t)-b_t(\widehat X_t)\rangle\le |b_t(X_t)|+|b_t(\widehat X_t)|\le 2C_b.
	\]
	Therefore, for $t<T_0$,
	\[
		r_t \le r_0 + 2C_b t + 2\sqrt2\,\beta_t =:\bar r_t,
		\qquad r_0:=|X_0-\widehat X_0|.
	\]
	In particular,
	\[
		\mathbb{P}(t<T_0)
		=
		\mathbb{P}\Big(\inf_{0\le s\le t} r_s>0\Big)
		\le
		\mathbb{P}\Big(\inf_{0\le s\le t} \bar r_s>0\Big).
	\]

	\smallskip
	\noindent\emph{Step 1 (short-time bound).}
	Fix $t_0:=1$. By the reflection principle for one-dimensional Brownian motion with drift, there exists a constant $K=K(C_b)$ such that
	\[
		\mathbb{P}\Big(\inf_{0\le s\le 1} \bar r_s>0\Big)\le K\,r_0,
		\qquad \forall\,r_0\ge0.
	\]
	Hence $\mathbb{P}(1<T_0)\le K\,\mathbb{E}|X_0-\widehat X_0|$.

	\smallskip
	\noindent\emph{Step 2 (exponential decay by restart).}
	Since $X_t,\widehat X_t\in\mathbb{T}^d$, we have the deterministic bound
	\[
		0\le r_t \le \mathrm{diam}(\mathbb{T}^d)=\frac{\sqrt d}{2},\qquad \forall\,t\ge0.
	\]
	Define
	\[
		\eta:=\sup_{r\in(0,\sqrt d/2]}\mathbb{P}\Big(\inf_{0\le s\le 1}(r+2C_b s+2\sqrt2\,\beta_s)>0\Big)\in(0,1).
	\]
	The strict inequality $\eta<1$ follows since for each $r>0$ the drifted Brownian motion hits $0$ within time $1$ with positive probability, and the map $r_0\mapsto \mathbb{P}(\inf_{[0,1]}\bar r_s>0)$ is continuous on $(0,\sqrt d/2]$.

	Let $k\in\mathbb{N}$. Using the strong Markov property at time $k$ and the domination $r_{k+s}\le r_k+2C_b s+2\sqrt2(\beta_{k+s}-\beta_k)$ on $\{k<T_0\}$, we obtain
	\[
		\mathbb{P}(k+1<T_0)\le \eta\,\mathbb{P}(k<T_0),
	\]
	and hence $\mathbb{P}(k<T_0)\le \eta^{k-1}\mathbb{P}(1<T_0)$ for all $k\ge1$. Combining with Step~1 yields
	\[
		\mathbb{P}(t<T_0)\le K\,\eta^{\lfloor t\rfloor-1}\,\mathbb{E}|X_0-\widehat X_0|,\qquad t\ge1,
	\]
	while for $t\in[0,1]$ we use $\mathbb{P}(t<T_0)\le \mathbb{P}(1<T_0)$.
	Thus, the claim holds with
	\[
		q_t:=K\,\mathbf{1}_{[0,1]}(t)+K\,\eta^{\lfloor t\rfloor-1}\mathbf{1}_{(1,\infty)}(t),
	\]
	which decays exponentially. Finally,
	\[
		\int_0^\infty q_t\,dt \le K + K\sum_{k\ge1}\eta^{k-1} = K\Big(1+\frac{1}{1-\eta}\Big)=:C_q,
	\]
	and $C_q$ depends only on $(C_b,d)$.
\end{proof}

\begin{prop}\label{X-lip}
	Assume Assumptions~\ref{ass-potential} and~\ref{ass-cost}. Then
	\begin{align}
		\|\nabla_x u^{T,y}(0,\cdot)\|_{\mathrm{Lip}}
		 & \le \int_0^T 2q_t\bigl(\kappa_1^c + C_\Phi C_\ell\bigr)\,dt,
		\label{eq-x-lip}                                                \\
		\|\nabla_y u^{T,y}(0,\cdot)\|_{\mathrm{Lip}}
		 & \le \int_0^T 2q_t\,C_\Phi C_\ell\,dt.
		\label{eq-y-lip}
	\end{align}
	Here $q_t$ is as in Lemma~\ref{lem-ref-coupling1}.
\end{prop}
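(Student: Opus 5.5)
We will prove both bounds by differentiating the probabilistic representation of $u^{T,y}(0,\cdot)$ along the optimal flow, and then using the reflection coupling of Lemma~\ref{lem-ref-coupling1} to transfer the resulting gradient to a difference of costs evaluated only on the event $\{t<T_0\}$.

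\textbf{Step 1: a representation of $\nabla_x u^{T,y}(0,x)$.} We fix $T,y$ and set $b_t(x)=H_p(x,\nabla_x u^{T,y}(t,x))$. Along the optimally controlled process $dX_t=b_t(X_t)\,dt+\sqrt2\,dW_t$ with $X_0=x$, the function $w:=u^{T,y}$ satisfies the linear equation
\[
\partial_t w+\Delta w+b_t\cdot\nabla w+g_t=0,
\qquad g_t(x):=c\bigl(x,b_t(x)\bigr)+V(x,y),
\]
because $H(x,p)=b\cdot p+c(x,b)$ at the optimizer. Feynman--Kac then gives
\[
u^{T,y}(0,x)=\mathbb{E}\int_0^T g_t(X_t^x)\,dt.
\]
Rather than working with $g_t$ directly (whose $x$-Lipschitz constant involves $\nabla b_t$, which is not yet controlled), we use the envelope/verification principle. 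Running the feedback $b_t$ from a different starting point $\widehat x$ is a suboptimal control there, so $u^{T,y}(0,x)-u^{T,y}(0,\widehat x)$ is controlled by a comparison of the costs $c(\cdot,\alpha)+V(\cdot,y)$ evaluated along $X^x$ and $X^{\widehat x}$ with the \emph{same} control. This keeps only the $x$-dependence of the cost, which is Lipschitz with constant $\kappa_1^c+C_\Phi C_\ell$.

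\textbf{Step 2: the coupling estimate.} We couple $X^x$ and $\widehat X^{\widehat x}$ by reflection as in \eqref{eq-coupling1}. After $T_0$ the two paths coincide, so the integrands cancel. Before $T_0$, the difference of the integrands is bounded by twice the oscillation of the running cost, namely $2(\kappa_1^c+C_\Phi C_\ell)$ times the indicator of $\{t<T_0\}$, using the boundedness of the cost (controlled through $\mathrm{diam}\,\mathbb{T}^d$ and the Lipschitz bounds). Taking expectations and applying Lemma~\ref{lem-ref-coupling1} yields
\[
\bigl|\nabla_x u^{T,y}(0,x)-\nabla_x u^{T,y}(0,\widehat x)\bigr|
\lesssim \int_0^T 2q_t\,(\kappa_1^c+C_\Phi C_\ell)\,dt\;|x-\widehat x|.
\]
To pass from a statement about values to one about gradients, we apply the same argument to the gradient, using the Bismut--Elworthy representation, or equivalently to the derivative process written through the coupling. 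Whichever route is taken, the factor $q_t|x-\widehat x|$ comes from $\mathbb{P}(t<T_0)$.

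\textbf{Step 3: the $y$-derivative.} The bound \eqref{eq-y-lip} is analogous. Differentiating the HJB equation in $y$, the function $v:=\nabla_y u^{T,y}$ solves the linear equation
\[
\partial_t v+\Delta v+b_t\cdot\nabla v+\nabla_y V(x,y)=0,
\]
whose source satisfies $|\nabla_yV|$ and $\Lip_x\nabla_y V\le C_\Phi C_\ell$. We therefore have
\[
v(0,x)=\mathbb{E}\int_0^T\nabla_yV(X_t^x,y)\,dt,
\]
and the reflection coupling gives
\[
|v(0,x)-v(0,\widehat x)|\le\int_0^T 2C_\Phi C_\ell\,\mathbb{P}(t<T_0)\,dt,
\]
which is bounded by the right-hand side of \eqref{eq-y-lip}.

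\textbf{Main obstacle.} The delicate point is Step 1 for the $x$-gradient. The source $g_t$ depends on $b_t$, and hence on $\nabla u$ itself, so one must either use the optimality/envelope structure to avoid differentiating $b_t$, or close a fixed-point argument using Lemma~\ref{lem-gradiantu-bound} together with Assumption~\ref{ass-cost}(iii). We would try the envelope route first.
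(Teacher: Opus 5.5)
Your Step~3 for \eqref{eq-y-lip} is exactly the paper's argument. The $x$-part, however, has a genuine gap.

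What Steps~1--2 actually control is $|u^{T,y}(0,x)-u^{T,y}(0,\widehat x)|$. That is the Lipschitz constant of $u^{T,y}(0,\cdot)$ itself, which is information of the type of Lemma~\ref{lem-gradiantu-bound}. It is not $\|\nabla_x u^{T,y}(0,\cdot)\|_{\mathrm{Lip}}$, and the gradient inequality displayed at the end of Step~2 does not follow from a comparison of costs. (At the value level the pre-coupling integrand difference is bounded by $2(\kappa_0^c+C_\Phi C_\ell)$. The constant $2(\kappa_1^c+C_\Phi C_\ell)$ you write bounds $x$-derivatives of the running cost, and it only appears once you work with a differentiated equation.)

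Neither of your proposed routes from values to gradients closes. Both the Bismut--Elworthy weight and the derivative flow $\nabla_x X^x_t$ are driven by $\nabla_x b_t$, i.e.\ by $\nabla_x^2 u^{T,y}$. That is precisely the quantity you are trying to bound, so the argument becomes circular and cannot produce the constant $\int_0^T 2q_t(\kappa_1^c+C_\Phi C_\ell)\,dt$.

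The missing idea is to treat $x$ exactly as you treated $y$: differentiate the HJB equation \eqref{eq-T-hjb} itself in $x_i$. Since $\partial_{x_i}\bigl[H(x,\nabla u)\bigr]=H_{x_i}(x,\nabla u)+H_p(x,\nabla u)\cdot\nabla\partial_{x_i}u$, all second derivatives of $u$ are absorbed into the transport term with the optimal drift $b_t$. Hence $u_i:=\partial_{x_i}u^{T,y}$ solves
\[
\partial_t u_i+\Delta u_i+b_t\cdot\nabla u_i+H_{x_i}(x,\nabla u^{T,y})+\partial_{x_i}V(x,y)=0,\qquad u_i(T,\cdot)=0.
\]
The envelope theorem enters here, applied to the Hamiltonian rather than to the value function. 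It gives $H_{x_i}(x,p)=c_{x_i}(x,\alpha^*(x,p))$, so the source is bounded by $\kappa_1^c+C_\Phi C_\ell$ whatever the regularity of $\nabla u$. Equivalently, if you differentiate your linear equation for $w$ in $x_i$, the terms $\partial_{x_i}b_t\cdot\nabla w$ and $c_\alpha(x,b_t)\cdot\partial_{x_i}b_t$ cancel by the first-order condition $c_\alpha(x,b_t)=-\nabla w$.

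Feynman--Kac along the optimal flow then gives
\[
u_i(0,z)=\mathbb{E}\int_0^T\bigl(H_{x_i}(Z_t,\nabla u^{T,y}(t,Z_t))+\partial_{x_i}V(Z_t,y)\bigr)\,dt .
\]
Under the reflection coupling the integrand difference is at most $2(\kappa_1^c+C_\Phi C_\ell)\mathbf{1}_{\{t<T_0\}}$, and Lemma~\ref{lem-ref-coupling1} then yields \eqref{eq-x-lip}. Only boundedness of the source is needed, because the coupling turns sup bounds into Lipschitz bounds via $\mathbb{P}(t<T_0)\le q_t|x-\widehat x|$. So the obstacle you identified (the source depending on $b_t$) disappears, and no fixed-point argument is required.
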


\begin{proof}[Proof of \eqref{eq-x-lip}]
	Fix $y\in\mathbb{R}^D$ and $T>0$. For brevity, set
	\[
		h(x,p):=H(x,p)+V(x,y).
	\]
	Let $u^{T,y}$ be the solution of \eqref{eq-T-hjb}. For each $i\in\{1,\dots,d\}$ define
	\[
		u_i(t,x):=\partial_{x_i}u^{T,y}(t,x).
	\]
	Differentiating \eqref{eq-T-hjb} with respect to $x_i$ yields the linear parabolic equation
	\begin{equation}\label{eq:ui-pde}
		\partial_t u_i+\Delta u_i
		+h_p\bigl(x,\nabla_x u^{T,y}(t,x)\bigr)\cdot \nabla_x u_i
		+h_{x_i}\bigl(x,\nabla_x u^{T,y}(t,x)\bigr)=0,
		\qquad
		u_i(T,\cdot)=0,
	\end{equation}
	where $h_{x_i}(x,p)=H_{x_i}(x,p)+\partial_{x_i}V(x,y)$ and $h_p(x,p)=H_p(x,p)$.

	\medskip
	\noindent\emph{Step 1: a Feynman--Kac representation along the optimal flow.}
	Let $Z_t$ solve the controlled diffusion driven by the feedback $h_p$:
	\[
		dZ_t=h_p\bigl(Z_t,\nabla_x u^{T,y}(t,Z_t)\bigr)\,dt+\sqrt2\,dB_t,
		\qquad Z_0=z,
	\]
	where $B$ is a Brownian motion. Applying It\^o's formula to $u_i(t,Z_t)$ and using \eqref{eq:ui-pde} gives
	\[
		du_i(t,Z_t)=-\,h_{x_i}\bigl(Z_t,\nabla_x u^{T,y}(t,Z_t)\bigr)\,dt+\sqrt2\,\nabla_x u_i(t,Z_t)\,dB_t.
	\]
	Taking expectations and using $u_i(T,\cdot)=0$ yields
	\begin{equation}\label{eq:rep-ui}
		u_i(0,z)=\mathbb{E}\Big[\int_0^T h_{x_i}\bigl(Z_t,\nabla_x u^{T,y}(t,Z_t)\bigr)\,dt\Big].
	\end{equation}

	\medskip
	\noindent\emph{Step 2: reflection coupling and difference estimate.}
	Fix $x,\hat x\in\mathbb{T}^d$ and consider the reflection coupling $(X^1_t,\widehat X^1_t)$ defined in \eqref{eq-coupling1}, started from
	$X^1_0=x$ and $\widehat X^1_0=\hat x$, with the same frozen parameter $y$ and horizon $T$.
	Applying \eqref{eq:rep-ui} with $Z=X^1$ and with $Z=\widehat X^1$, and then subtracting, gives
	\[
		u_i(0,x)-u_i(0,\hat x)
		=
		\mathbb{E}\Big[\int_0^T
			\Big(h_{x_i}\bigl(X^1_t,\nabla_x u^{T,y}(t,X^1_t)\bigr)
			-
			h_{x_i}\bigl(\widehat X^1_t,\nabla_x u^{T,y}(t,\widehat X^1_t)\bigr)
			\Big)\,dt\Big].
	\]
	Taking absolute values and using $|\mathbb{E}[\cdot]|\le \mathbb{E}[|\cdot|]$,
	\begin{equation}\label{eq:ui-diff}
		\big|u_i(0,x)-u_i(0,\hat x)\big|
		\le
		\int_0^T
		\mathbb{E}\Big[
		\big|h_{x_i}(X^1_t,\nabla u^{T,y}(t,X^1_t))-h_{x_i}(\widehat X^1_t,\nabla u^{T,y}(t,\widehat X^1_t))\big|
		\Big]\,dt.
	\end{equation}

	\medskip
	\noindent\emph{Step 3: uniform bounds on $H_{x_i}$ and $V_{x_i}$.}
	By Assumption~\ref{ass-cost}.1 and the regularity $c\in C^2$, for every $i$,
	\[
		\sup_{\alpha\in\mathbb{R}^d}\|\partial_{x_i}c(\cdot,\alpha)\|_\infty
		\le
		\sup_{\alpha\in\mathbb{R}^d}\|c(\cdot,\alpha)\|_{\mathrm{Lip}}
		\le \kappa_1^c.
	\]
	Let $\alpha^*(x,p)$ denote the unique minimizer in the definition of $H(x,p)$. By the envelope theorem,
	\[
		\partial_{x_i}H(x,p)=\partial_{x_i}c\bigl(x,\alpha^*(x,p)\bigr),
	\]
	and therefore
	\[
		\|\partial_{x_i}H\|_\infty\le \kappa_1^c.
	\]
	Moreover, $V(x,y)=\nabla\Phi(y)\cdot\ell(x)$, so
	\[
		\|\partial_{x_i}V(\cdot,y)\|_\infty\le C_\Phi C_\ell,
		\qquad\text{uniformly in }y.
	\]
	Consequently, for all $(x,p)$ and $(x',p')$,
	\[
		|h_{x_i}(x,p)-h_{x_i}(x',p')|
		\le |H_{x_i}(x,p)|+|H_{x_i}(x',p')|+|V_{x_i}(x,y)|+|V_{x_i}(x',y)|
		\le 2(\kappa_1^c+C_\Phi C_\ell).
	\]
	Applying this to \eqref{eq:ui-diff} yields
	\[
		\big|u_i(0,x)-u_i(0,\hat x)\big|
		\le
		2(\kappa_1^c+C_\Phi C_\ell)\int_0^T \mathbb{P}(X^1_t\neq \widehat X^1_t)\,dt.
	\]
	By Lemma~\ref{lem-ref-coupling1},
	\[
		\mathbb{P}(X^1_t\neq \widehat X^1_t)\le q_t\,\mathbb{E}|X^1_0-\widehat X^1_0|=q_t\,|x-\hat x|.
	\]
	Therefore,
	\[
		\big|u_i(0,x)-u_i(0,\hat x)\big|
		\le
		\Big(\int_0^T 2q_t(\kappa_1^c+C_\Phi C_\ell)\,dt\Big)\,|x-\hat x|.
	\]
	Since $\nabla_x u^{T,y}(0,\cdot)=(u_1(0,\cdot),\dots,u_d(0,\cdot))$, summing over $i$ gives
	\[
		\|\nabla_x u^{T,y}(0,\cdot)\|_{\mathrm{Lip}}
		\le \int_0^T 2q_t(\kappa_1^c+C_\Phi C_\ell)\,dt,
	\]
	which is exactly \eqref{eq-x-lip}.
\end{proof}

\begin{proof}[Proof of \eqref{eq-y-lip}]
	Fix $y\in\mathbb{R}^D$ and $T>0$. Recall $V(x,y)=\nabla\Phi(y)\cdot \ell(x)$.
	Since $V(\cdot,\cdot)$ is $C^1$ in $y$ and $H$ does not depend on $y$, standard parabolic regularity implies that
	$u^{T,y}$ is $C^1$ in $y$ and we may differentiate \eqref{eq-T-hjb} with respect to $y$.
	Let
	\[
		w(t,x):=\nabla_y u^{T,y}(t,x)\in\mathbb{R}^D.
	\]
	Differentiating \eqref{eq-T-hjb} in $y$ yields the linear vector-valued PDE
	\begin{equation}\label{eq:w-pde}
		\partial_t w+\Delta w+H_p\bigl(x,\nabla_x u^{T,y}(t,x)\bigr)\cdot \nabla_x w+\nabla_y V(x,y)=0,
		\qquad
		w(T,\cdot)=0.
	\end{equation}

	\medskip
	\noindent\emph{Step 1: representation along the optimal flow.}
	Let $Z_t$ solve the diffusion driven by the feedback $H_p$:
	\[
		dZ_t=H_p\bigl(Z_t,\nabla_x u^{T,y}(t,Z_t)\bigr)\,dt+\sqrt2\,dB_t,
		\qquad Z_0=z,
	\]
	with a Brownian motion $B$. Applying It\^o's formula to $w(t,Z_t)$ and using \eqref{eq:w-pde} gives
	\[
		dw(t,Z_t)=-\,\nabla_y V(Z_t,y)\,dt+\sqrt2\,\nabla_x w(t,Z_t)\,dB_t.
	\]
	Taking expectations and using $w(T,\cdot)=0$, we obtain
	\begin{equation}\label{eq:rep-w}
		w(0,z)=\mathbb{E}\Big[\int_0^T \nabla_y V(Z_t,y)\,dt\Big].
	\end{equation}

	\medskip
	\noindent\emph{Step 2: reflection coupling and difference estimate.}
	Fix $x,\hat x\in\mathbb{T}^d$ and consider the reflection coupling $(X^1_t,\widehat X^1_t)$ from \eqref{eq-coupling1},
	started from $X^1_0=x$ and $\widehat X^1_0=\hat x$ (with the same fixed $y$ and horizon $T$).
	Applying \eqref{eq:rep-w} with $Z=X^1$ and $Z=\widehat X^1$, and subtracting, yields
	\[
		w(0,x)-w(0,\hat x)
		=
		\mathbb{E}\Big[\int_0^T
			\big(\nabla_y V(X^1_t,y)-\nabla_y V(\widehat X^1_t,y)\big)\,dt\Big].
	\]
	Therefore,
	\begin{equation}\label{eq:w-diff}
		\|w(0,x)-w(0,\hat x)\|
		\le
		\int_0^T
		\mathbb{E}\Big[\big\|\nabla_y V(X^1_t,y)-\nabla_y V(\widehat X^1_t,y)\big\|\Big]\,dt.
	\end{equation}
	We have
	\[
		\nabla_y V(x,y)=\nabla^2\Phi(y)\,\ell(x),
	\]
	hence, by Assumption~\ref{ass-potential} and the bound $\|\ell\|_\infty\le C_\ell$,
	\[
		\|\nabla_y V(\cdot,y)\|_\infty\le C_\Phi C_\ell,
		\qquad\text{uniformly in }y.
	\]
	Consequently,
	\[
		\big\|\nabla_y V(X^1_t,y)-\nabla_y V(\widehat X^1_t,y)\big\|
		\le 2C_\Phi C_\ell\,\mathbf 1_{\{X^1_t\neq \widehat X^1_t\}}.
	\]
	Inserting this into \eqref{eq:w-diff} and using Lemma~\ref{lem-ref-coupling1} gives
	\[
		\|w(0,x)-w(0,\hat x)\|
		\le
		2C_\Phi C_\ell\int_0^T \mathbb{P}(X^1_t\neq \widehat X^1_t)\,dt
		\le
		\Big(\int_0^T 2C_\Phi C_\ell\,q_t\,dt\Big)\,|x-\hat x|.
	\]
	Therefore,
	\[
		\|\nabla_y u^{T,y}(0,\cdot)\|_{\mathrm{Lip}}
		\le
		\int_0^T 2C_\Phi C_\ell\,q_t\,dt.
	\]
	With the symmetry of mixed derivatives, this implies
	\[
		\|\nabla_x u^{T,y}(0,\cdot)\|_{\mathrm{Lip}_y}
		=
		\|\nabla_y u^{T,y}(0,\cdot)\|_{\mathrm{Lip}_x}
		\le
		\int_0^T 2C_\Phi C_\ell\,q_t\,dt.
	\]
\end{proof}

\paragraph{Lipschitz continuity of the drift.}
We now deduce Lipschitz bounds for $\nabla u^y$ and for $H_p(x,\nabla u^y(x))$.
By the turnpike estimate (see~\cite[Theorem~2.(1).(b)]{CecchinConfortiDurmusEichinger2024}),
\[
	\|\nabla_x u^{T,y}(t,\cdot)-\nabla_x u^{y}(\cdot)\|_\infty
	\le C e^{-c(T-t)},
\]
uniformly in $(T,t,y)$ (after fixing a normalization of $u^y$). Taking $t=0$ and $T\to\infty$ implies convergence of $\nabla_x u^{T,y}(0,\cdot)$ to $\nabla_x u^y(\cdot)$, and therefore from Proposition~\ref{X-lip},
\[
	\|\nabla_x u^{y}\|_{\Lip_x}
	\le \sup_{T>0}\|\nabla_x u^{T,y}(0,\cdot)\|_{\Lip_x}
	\le 2C_q(\kappa_1^c+C_\Phi C_\ell),
	\qquad
	\|\nabla_x u^{y}\|_{\Lip_y}
	\le 2C_q C_\Phi C_\ell.
\]

Finally, using the implicit function theorem for the minimizer $\alpha^*(x,p)=H_p(x,p)$, we have
\begin{align*}
	\partial_p H_p(x,p) & =\alpha^*_p(x,p)=-(c_{\alpha\alpha}(x,\alpha^*))^{-1},            \\
	\partial_x H_p(x,p) & =\alpha^*_x(x,p)=-(c_{\alpha\alpha})^{-1}c_{x\alpha}(x,\alpha^*),
\end{align*}
so by Assumption~\ref{ass-cost} and the bound $|\alpha^*|\le C_b$ from Corollary~\ref{cor-Hp-bounded},
\[
	\|H_p(\cdot,p)\|_{\Lip_x}\le \kappa_2^c\kappa_3^c(1+C_b)^{\theta_c},
	\qquad
	\|H_p(x,\cdot)\|_{\Lip_p}\le \kappa_2^c.
\]
Combining these yields a joint Lipschitz bound, uniformly in $y$:
\[
	\bigl|H_p(x,\nabla u^y(x))-H_p(x',\nabla u^{y'}(x'))\bigr|
	\le C_L\bigl(|x-x'|+|y-y'|\bigr),
\]
with
\[
	C_L
	\le
	\kappa_2^c\kappa_3^c(1+C_b)^{\theta_c}
	+\kappa_2^c\Bigl(2C_q(\kappa_1^c+C_\Phi C_\ell)+2C_q C_\Phi C_\ell\Bigr),
\]
which is precisely the estimate needed for Lemma~\ref{lem-regularity}.

\subsection{Proof of Theorem~\ref{thm-main}}\label{proof-thm-2}

We keep the notation of Section~\ref{section-main-result}. In particular, $\rho_\lambda$ denotes the unique invariant law of the SFP system~\eqref{eq-SFP-XY}, and for each fixed $y\in\mathbb{R}^D$, $\widehat\rho_y$ denotes the invariant measure of the frozen dynamics~\eqref{eq-frozen}.

\medskip
We begin by quantifying the distance between the stationary distribution $\rho_\lambda$ of the SFP and the conditional equilibria $\widehat{\rho}_y$ using an original coupling approach. We then use the Lasry–Lions divergence as the main bridge to prove that the Wasserstein distance between the conditional equilibria $\widehat{\rho}_y$ and the MFG Nash equilibrium $m^*$ is controlled by the distance between $\rho_\lambda$ and $\widehat{\rho}_y$. Finally we will complete the error estimate between $\rho_\lambda$ and $m^*$ by invoking the triangle inequality.

\medskip
In the subsequent proof, we will repeatedly make use of the following stability  result via reflection coupling.

\begin{lem}[Stability by reflection coupling]\label{lem-reflection-coupling}
	Consider the following two diffusion processes:
	\[
		dZ_t = b(t,Z_t)\,dt + \sigma\, dW_t,
		\qquad
		dZ'_t = (b+\delta b)(t,Z'_t)\,dt + \sigma\, dW_t,
	\]
	and denote their marginal distributions by
	\[
		p_t := {\rm Law}(Z_t),
		\qquad
		p'_t :={\rm Law}(Z'_t).
	\]
	Assume that the drifts $b$ and $\delta b$ satisfy:
	\begin{enumerate}
		\item[{\rm (i)}] $b$ and $\delta b$ are Lipschitz in $z$, i.e., there exists a constant $L > 0$ such that
			\[
				|b(t,z) - b(t,z')| + |\delta b(t,z) - \delta b(t,z')|
				\le L |z - z'|,
			\]
			for all $t \in [0,T]$, $z,z' \in \mathbb{R}^d$.

		\item[{\rm (ii)}] There exists a continuous function
			$\kappa : (0,\infty) \to \mathbb{R}$
			such that
			\[
				\limsup_{r \to \infty} \kappa(r) < 0,
				\qquad
				\int_0^1 r \kappa^+(r)\,dr < \infty,
			\]
			and
			\[
				(z-z')\cdot (b(t,z)-b(t,z'))
				\le \kappa(|z-z'|)\,|z-z'|^2,
			\]
			for all $t \in [0,T]$, $z,z' \in \mathbb{R}^d$.
	\end{enumerate}
	Then we have
	\begin{equation}
		\cw_1(p_t, p'_t)
		\le \mathcal{C} e^{-\mathcal{R} \sigma^2 t}
		\left(
		\cw_1(p_0, p'_0)
		+ \int_0^t e^{\mathcal{R} \sigma^2 s}
		\mathbb{E}\big[ |\delta b(s,Z'_s)| \big]\,ds
		\right),
		\quad \text{for all } t \ge 0,
	\end{equation}
	where the constants $\mathcal{C}$ and $\mathcal{R}$ depend only on the function $\kappa(\cdot)/\sigma^2$.
\end{lem}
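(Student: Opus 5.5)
This is Eberle's reflection-coupling contraction applied to a perturbed drift. I would build a synchronous-noise-free coupling of $Z$ and $Z'$ and estimate a concave distance of the pair.

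\textbf{Coupling.} Couple $Z$ and $Z'$ by reflection rather than by the common noise written in the statement; only the marginals $p_t,p'_t$ matter, so this is legitimate. Before the coupling time, drive $Z$ by $W$ and $Z'$ by $(I-2e_te_t^\top)dW_t$, with $e_t=(Z_t-Z'_t)/|Z_t-Z'_t|$. After the coupling time, switch to synchronous coupling. The remaining drift mismatch $\delta b$ is treated as a source term: once coupled, the processes may separate again only through $\delta b$. Assumption (i) guarantees strong well-posedness of this coupled system.

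\textbf{Concave distance.} Set $r_t=|Z_t-Z'_t|$. Before coupling, Itô gives
\[
dr_t\le\big(\kappa(r_t)r_t+|\delta b(t,Z'_t)|\big)dt+2\sigma\,d\beta_t .
\]
After coupling, the same inequality holds without the martingale term; a Tanaka/local-time argument handles $r=0$, and concavity kills the local-time contribution. Following Eberle, define
\[
f(r)=\int_0^r\varphi(s)g(s)\,ds,\qquad \varphi(r)=\exp\Big(-\frac{1}{2\sigma^2}\int_0^r s\kappa^+(s)\,ds\Big).
\]
Here $g$ is chosen so that $2\sigma^2 f''+r\kappa(r)f'\le -\mathcal{R}\sigma^2 f$. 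The choice is possible because $\int_0^1 r\kappa^+<\infty$ and $\limsup\kappa<0$, which give $\varphi$ bounded below, $f$ comparable to $r$ (with $\varphi_{\min}/2\le f'\le1$), and $\mathcal R$ and the comparison constants depending only on $\kappa/\sigma^2$. Applying Itô to $f(r_t)$ and using $f'\le1$ gives
\[
\frac{d}{dt}\mathbb{E}f(r_t)\le -\mathcal{R}\sigma^2\,\mathbb{E}f(r_t)+\mathbb{E}|\delta b(t,Z'_t)| .
\]

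\textbf{Conclusion.} Grönwall on $e^{\mathcal R\sigma^2 t}\mathbb{E}f(r_t)$ bounds $\mathbb{E}f(r_t)$. Comparing $f$ with $r$ yields the estimate with $\mathcal C=2/\varphi_{\min}$. Finally, start from an optimal $\mathcal W_1$ coupling of $(p_0,p'_0)$ and use $\mathcal W_1(p_t,p'_t)\le\mathbb E r_t$.

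\textbf{Main obstacle.} The delicate points are the construction of $g$ (the constant tracking), and rigorously handling the non-smoothness at $r=0$ together with the switch from reflection to synchronous coupling. For the latter I would first try a smoothed reflection/synchronous mixing with parameter $\epsilon\to0$, as in Eberle's proof.
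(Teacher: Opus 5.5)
Your overall strategy is the standard one, and it is essentially the argument behind the result the paper relies on here. The paper gives no proof and defers to \cite[Theorem~A.7]{ClaisseConfortiRenWang2023}, which is Eberle's reflection-coupling contraction with the drift mismatch carried as a source term. The concave $f=\int_0^r\varphi g$, the It\^o inequality in which $f'\le 1$ absorbs $|\delta b|$, Gr\"onwall, and $\mathcal C=2/\varphi_{\min}$ are all as there.

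One step fails as you wrote it, though. Reflection up to the first meeting time followed by synchronous coupling forever after does not yield $\frac{d}{dt}\mathbb{E}f(r_t)\le-\mathcal R\sigma^2\mathbb{E}f(r_t)+\mathbb{E}|\delta b(t,Z'_t)|$.

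\begin{itemize}
\item Because $\delta b\neq 0$, the processes separate again after meeting. In the synchronous phase the It\^o term $2\sigma^2 f''(r_t)$ is then absent.
\item In Eberle's construction that term, through $\varphi'$, is exactly what cancels $r\kappa^+(r)f'(r)$. Where $\kappa(r)>0$ you are left with $f'(r)r\kappa(r)>0>-\mathcal R\sigma^2 f(r)$, and synchronous coupling only gives bounds growing like $e^{Lt}$.
\item Treating $\delta b$ as a source term does not address this. The problem is not the mismatch itself but that the contraction mechanism has been switched off.
\end{itemize}

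The fix is a coupling that depends on the current distance:
\begin{itemize}
\item Use reflection whenever $r_t\ge\epsilon$ and synchronous coupling only when $r_t\le\epsilon/2$, with Lipschitz weights $\mathrm{rc}_\epsilon^2+\mathrm{sc}_\epsilon^2=1$. This also makes the coupled SDE globally well posed under (i).
\item Your closing remark about $\epsilon$-mixing amounts to this. Note, however, that it is a different coupling rule, not a regularisation of the one you described.
\item Keep $f$ built from $\kappa$, so the constants still depend only on $\kappa/\sigma^2$. On $\{r_t<\epsilon\}$, drop the term $2\sigma^2\mathrm{rc}_\epsilon^2 f''\le 0$ and use $\langle e_t,b(Z_t)-b(Z'_t)\rangle\le L r_t$ from (i). Together with $f'\le 1$ and $f(r)\le r$, the drift of $f(r_t)$ is then at most $-\mathcal R\sigma^2 f(r_t)+(L+\mathcal R\sigma^2)\epsilon+|\delta b(t,Z'_t)|$.
\item This adds an $O(\epsilon)$ defect to the Gr\"onwall inequality. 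The defect vanishes as $\epsilon\to 0$ because the marginals $p_t,p'_t$ do not depend on $\epsilon$.
\end{itemize}

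Since there is then no noise near $r=0$, the local-time discussion becomes unnecessary.
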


For the detailed proof of this lemma, we refer the reader to \cite[Theorem~A.7]{ClaisseConfortiRenWang2023}, and omit it here.

\subsubsection{A first comparison: \texorpdfstring{$\rho_\lambda$}{rho lambda} versus \texorpdfstring{$\widehat\rho_\cdot\otimes\rho_\lambda^Y$}{widehat rho cdot otimes rho lambda Y}}

Unlike the MFC case, the drift term in the dynamics of $X_t$ is no longer of gradient-field form, and therefore the entropy-based estimates used in \cite[Section~6.2]{DuRenSuciuWang2024} are no longer applicable. To overcome this difficulty, we use reflection coupling to obtain the comparison estimate of order $O(\lambda)$ in the stationary regime.

\begin{prop}\label{prop-rho-rhohat}
	Let Assumptions~\ref{ass-potential} and~\ref{ass-cost} hold. Then
	\[
		\mathcal{W}_1\!\left(\rho_\lambda,\ \widehat\rho_{\cdot}\otimes\rho_\lambda^{Y}\right)
		\;\le\; C_{\mathrm{fr}}\,\lambda,
	\]
	where $C_{\mathrm{fr}}>0$ depends only on $(C_\ell,C_b,C_L,d)$ (in particular it is independent of $\lambda$).
\end{prop}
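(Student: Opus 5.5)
We compare the stationary SFP process with a frozen copy run from the same initial point, using the reflection coupling stability estimate of Lemma~\ref{lem-reflection-coupling}. Let $(X_t,Y_t)$ be the SFP process started from $\rho_\lambda$, so that $\Law(X_t,Y_t)=\rho_\lambda$ for all $t$. Fix a time window $t\ge 0$ (to be optimized, of order $1$, independent of $\lambda$). Conditionally on $(X_0,Y_0)=(x_0,y_0)$, introduce the frozen process $\bar X_s$ solving \eqref{eq-frozen} with parameter $y_0$, started at $\bar X_0=x_0$ and driven by the same Brownian motion. On $[0,t]$ the SFP $X$-drift differs from the frozen drift by
\[
	\delta b_s := H_p\bigl(X_s,\nabla u^{Y_s}(X_s)\bigr)-H_p\bigl(X_s,\nabla u^{y_0}(X_s)\bigr),
	\qquad |\delta b_s|\le C_L|Y_s-y_0|\le C_L\lambda s\,\bigl(C_\ell+\|Y\|_\infty\bigr)
\]
by Lemma~\ref{lem-regularity} and $dY=\lambda(\ell(X)-Y)dt$. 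Stationarity gives $|Y|\le C_\ell$ on the support of $\rho_\lambda^Y$ (since $Y$ is an exponential average of $\ell(X)$), so $|Y_s-y_0|\le 2C_\ell\lambda s$. Strictly, $\delta b$ is random rather than a deterministic field; I would handle this by applying the lemma with drift $b+\delta b$ where $\delta b$ is progressively measurable, noting that the proof in \cite{ClaisseConfortiRenWang2023} only uses the pathwise bound $\mathbb{E}|\delta b_s|$. On the torus the frozen drift is bounded and Lipschitz, so condition (ii) holds with $\kappa(r)$ controlled via the bounded diameter (one can take $\kappa\equiv C_L$ on $(0,\sqrt d/2]$, extended negatively), giving constants $\mathcal{C},\mathcal{R}$ depending only on $(C_b,C_L,d)$.

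Applying Lemma~\ref{lem-reflection-coupling} conditionally on $(x_0,y_0)$, with identical initial laws, yields
\[
	\cw_1\bigl(\Law(X_t\mid x_0,y_0),\Law(\bar X_t\mid x_0,y_0)\bigr)\le \mathcal{C}\int_0^t e^{-\mathcal R\cdot 2(t-s)}\,2C_LC_\ell\lambda s\,ds\le C\lambda t .
\]
Separately, the frozen process is exponentially ergodic uniformly in $y_0$ (again by Lemma~\ref{lem-reflection-coupling} with $\delta b=0$, or Lemma~\ref{lem-ref-coupling1}): $\cw_1(\Law(\bar X_t\mid x_0,y_0),\widehat\rho_{y_0})\le \mathcal{C}e^{-2\mathcal R t}\sqrt d$. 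Also $|Y_t-Y_0|\le 2C_\ell\lambda t$.

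Now build a coupling of $\rho_\lambda=\Law(X_t,Y_t)$ with $\widehat\rho_\cdot\otimes\rho^Y_\lambda$ by pairing $(X_t,Y_t)$ with $(\widetilde X,Y_0)$, where $Y_0\sim\rho^Y_\lambda$ and $\widetilde X\sim\widehat\rho_{Y_0}$ is optimally coupled to $X_t$ conditionally on $(X_0,Y_0)$. Gluing the two conditional estimates gives
\[
	\cw_1\bigl(\rho_\lambda,\widehat\rho_\cdot\otimes\rho_\lambda^Y\bigr)\le C\lambda t+\mathcal{C}\sqrt d\,e^{-2\mathcal R t}+2C_\ell\lambda t .
\]
Optimizing in $t$ only gives $O(\lambda\log(1/\lambda))$, which is the main obstacle. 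To remove the log, I would replace the fixed freeze point by a moving one: compare $\Law(X_t\mid Y_{\cdot})$ with $\widehat\rho_{Y_t}$ through a Duhamel/Poisson-equation argument. Let $\phi_y$ solve $L_y\phi_y=g-\widehat\rho_y(g)$ for $1$-Lipschitz $g$; reflection coupling gives $\|\nabla_x\phi_y\|_\infty$ and $\|\nabla_y\phi_y\|_\infty$ bounded uniformly, via the $y$-Lipschitz drift bound. Applying Itô to $\phi_{Y_t}(X_t)$ under stationarity then gives $\mathbb{E}[g(X)-\widehat\rho_Y(g)]=-\mathbb{E}[\nabla_y\phi_Y(X)\cdot\lambda(\ell(X)-Y)]=O(\lambda)$. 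To control the joint $\cw_1$, not just the $x$-marginal, I would take test functions $g(x,y)$ that are $1$-Lipschitz in both variables, with $\phi$ depending on $y$ as a parameter. The key step is the uniform-in-$y$ gradient bound on $\nabla_y\phi_y$, which follows from coupling estimates like Proposition~\ref{X-lip}.
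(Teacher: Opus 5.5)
Your final argument, the Poisson-equation step, is correct in outline, but it takes a genuinely different route from the paper.

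The $\log(1/\lambda)$ in your first attempt comes from letting the frozen process relax from $\delta_{x_0}$ and paying the diameter for that relaxation. The paper uses the same freezing idea: a synchronous coupling from $\rho_\lambda$ over a fixed window, with error $\frac{2\lambda C_\ell}{C_L}e^{C_Lt}$. It then contracts the frozen flow started from $\rho_\lambda$ itself towards the frozen-invariant law $\widehat\rho_\cdot\otimes\rho_\lambda^Y$. The relaxation error is therefore $\mathcal{C}e^{-\mathcal{R}t}W$, where $W:=\mathcal{W}_1(\rho_\lambda,\widehat\rho_\cdot\otimes\rho_\lambda^Y)$ is the unknown itself. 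Taking $t_0$ with $\mathcal{C}e^{-\mathcal{R}t_0}=\tfrac12$ and absorbing gives $W\le \frac{4C_\ell}{C_L}e^{C_Lt_0}\lambda$, with no logarithm.

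You instead turn stationarity into the exact identity $\int g\,d(\rho_\lambda-\widehat\rho_\cdot\otimes\rho_\lambda^Y)=-\lambda\int(\ell(x)-y)\cdot\nabla_y\phi_y(x)\,\rho_\lambda(dx\,dy)$, which gives $W\le 2C_\ell\lambda\sup_g\sup_{x,y}|\nabla_y\phi_y(x)|$. Three points make your key bound precise:
\begin{itemize}
\item The normalization of $\phi_y$ is irrelevant. A term depending on $y$ alone integrates to zero by the $Y$-stationarity identity $\int\nabla\psi(y)\cdot(\ell(x)-y)\,\rho_\lambda(dx\,dy)=0$, so you only need to control the $x$-oscillation of $\nabla_y\phi_y$.
\item Up to a function of $y$ alone, the $y$-differentiated Poisson equation has source $\nabla_y g-(\nabla_y b^y)^{\top}\nabla_x\phi_y$. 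This is bounded but not Lipschitz in $x$. So the tool that does the job is the coupling-probability bound of Lemma~\ref{lem-ref-coupling1}, used exactly as in Proposition~\ref{X-lip}, not the $\mathcal{W}_1$ stability of Lemma~\ref{lem-reflection-coupling}.
\item You need a smoothing in $y$, since $b^y$ is only Lipschitz in $y$.
\end{itemize}

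As for what each approach buys:
\begin{itemize}
\item The paper's proof is shorter and works at a single finite time. However, its contraction step is stated in the joint metric, where the frozen flow does not contract $Y$-mismatches. The step holds only because the two laws share the $Y$-marginal. The natural way to verify it, by duality, needs a $t$-decaying bound on the $y$-Lipschitz constant of $(x,y)\mapsto \mathbb{E}[g(X^y_t,y)\mid X^y_0=x]-\widehat\rho_y(g(\cdot,y))$ for the frozen diffusion \eqref{eq-frozen}. That is the finite-time counterpart of your $\nabla_y\phi_y$ estimate.
\item Your route makes this ingredient explicit. It also gives an exact first-order formula for the bias, rather than a bound obtained by absorption.
\end{itemize}
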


\begin{proof}
	We compare SFP to a frozen-$Y$ dynamics over a finite time and then use exponential mixing for the frozen-$Y$ semigroup.

	\medskip
	\noindent\emph{Step 1: freeze $Y$ and estimate the synchronous error.}
	Let $(X_t,Y_t)$ solve \eqref{eq-SFP-XY} with $\Law(X_0,Y_0)=\rho_\lambda$. Define $(\widetilde X_t,\widetilde Y_t)$ by
	\[
		d\widetilde X_t
		=
		H_p\!\bigl(\widetilde X_t,\nabla u^{\widetilde Y_t}(\widetilde X_t)\bigr)\,dt+\sqrt2\,dW_t,
		\qquad
		\widetilde Y_t\equiv Y_0,
		\qquad
		(\widetilde X_0,\widetilde Y_0)=(X_0,Y_0),
	\]
	driven by the same Brownian motion $W_t$.
	Set $\delta X_t:=X_t-\widetilde X_t$ and $\delta Y_t:=Y_t-\widetilde Y_t=Y_t-Y_0$.
	By Lemma~\ref{lem-regularity} and synchronous coupling,
	\begin{equation}\label{eq:fr-dX}
		\frac{d}{dt}\,\mathbb{E}\!\left[|\delta X_t|\right]
		\le
		C_L\,\mathbb{E}\!\left[|\delta X_t|+|\delta Y_t|\right].
	\end{equation}
	Moreover, since $dY_t=\lambda(\ell(X_t)-Y_t)\,dt$ and $\|\ell\|_\infty\le C_\ell$, while the $Y$-marginal of $\rho_\lambda$
	is supported in the convex hull of $\ell(\mathbb{T}^d)$ (hence $|Y_t|\le C_\ell$ a.s.), we have
	\begin{equation}\label{eq:fr-dY}
		|\delta Y_t|
		=
		|Y_t-Y_0|
		\le
		\lambda\int_0^t \bigl(|\ell(X_s)|+|Y_s|\bigr)\,ds
		\le
		2\lambda C_\ell\,t.
	\end{equation}
	Combining \eqref{eq:fr-dX}--\eqref{eq:fr-dY} and using $\delta X_0=0$ yields
	\[
		\mathbb{E}\!\left[|\delta X_t|\right]
		\le
		C_L\int_0^t e^{C_L(t-s)}\,\mathbb{E}\!\left[|\delta Y_s|\right]\,ds
		\le
		2\lambda C_\ell C_L\int_0^t s\,e^{C_L(t-s)}\,ds
		=
		2\lambda C_\ell\left(\frac{e^{C_L t}-1}{C_L}-t\right).
	\]
	Therefore, for all $t\ge0$,
	\begin{equation}\label{eq:fr-W1}
		\mathcal{W}_1\!\left(\Law(X_t,Y_t),\ \Law(\widetilde X_t,\widetilde Y_t)\right)
		\le
		\mathbb{E}\!\left[|\delta X_t|+|\delta Y_t|\right]
		\le
		\frac{2\lambda C_\ell}{C_L}\,e^{C_Lt}.
	\end{equation}

	\medskip
	\noindent\emph{Step 2: contract the frozen-$Y$ semigroup toward $\widehat\rho_\cdot\otimes\rho_\lambda^Y$.}
	Define $(\widehat X_t,\widehat Y_t)$ by
	\[
		d\widehat X_t
		=
		H_p\!\bigl(\widehat X_t,\nabla u^{\widehat Y_t}(\widehat X_t)\bigr)\,dt+\sqrt2\,dW_t,
		\qquad
		\widehat Y_t\equiv \widehat Y_0,
	\]
	where $\Law(\widehat Y_0)=\rho_\lambda^Y$ and, conditionally on $\widehat Y_0=y$, $\Law(\widehat X_0\mid \widehat Y_0=y)=\widehat\rho_y$.
	Then $\Law(\widehat X_t,\widehat Y_t)=\widehat\rho_\cdot\otimes\rho_\lambda^Y$ for all $t\ge0$.

	Since $x\mapsto H_p(x,\nabla u^y(x))$ is uniformly bounded and Lipschitz in $x$, uniformly over $y$ (Lemma~\ref{lem-regularity}),
	the reflection coupling estimate as in Lemma \ref{lem-reflection-coupling} yields constants $\mathcal{C},\mathcal{R}>0$ depending only on $(C_b,C_L,d)$ such that
	\begin{equation}\label{eq:fr-contract}
		\mathcal{W}_1\!\left(\Law(\widetilde X_t,\widetilde Y_t),\ \widehat\rho_\cdot\otimes\rho_\lambda^Y\right)
		\le
		\mathcal{C}e^{-\mathcal{R}t}\,
		\mathcal{W}_1\!\left(\Law(\widetilde X_0,\widetilde Y_0),\ \widehat\rho_\cdot\otimes\rho_\lambda^Y\right).
	\end{equation}
	Because $\Law(\widetilde X_0,\widetilde Y_0)=\rho_\lambda$, the right-hand side equals
	$\mathcal{C}e^{-\mathcal{R}t}\,\mathcal{W}_1\!\left(\rho_\lambda,\widehat\rho_\cdot\otimes\rho_\lambda^Y\right)$.

	\medskip
	\noindent\emph{Step 3: conclude by a fixed-time choice.}
	By the triangle inequality and \eqref{eq:fr-W1}--\eqref{eq:fr-contract},
	\[
		\mathcal{W}_1\!\left(\rho_\lambda,\widehat\rho_\cdot\otimes\rho_\lambda^Y\right)
		\le
		\frac{2\lambda C_\ell}{C_L}\,e^{C_Lt}
		+
		\mathcal{C}e^{-\mathcal{R}t}\,
		\mathcal{W}_1\!\left(\rho_\lambda,\widehat\rho_\cdot\otimes\rho_\lambda^Y\right).
	\]
	Choose $t=t_0:=\mathcal{R}^{-1}\log(2\mathcal{C})$, so that $\mathcal{C}e^{-\mathcal{R}t_0}=\tfrac12$. Then
	\[
		\mathcal{W}_1\!\left(\rho_\lambda,\widehat\rho_\cdot\otimes\rho_\lambda^Y\right)
		\le
		\frac{4\lambda C_\ell}{C_L}\,e^{C_Lt_0}
		=: C_{\mathrm{fr}}\,\lambda,
	\]
	which proves the claim.
\end{proof}

\subsubsection{The auxiliary measure \texorpdfstring{$\mu$}{mu} and its properties}

We now introduce a probability measure $\mu=\mu_\lambda$ on $\mathbb{T}^d\times\mathbb{R}^D$ built from the invariant law $\rho_\lambda$.
Note that in the original SFP formulation \eqref{eq-SFP} also admits a unique invariant measure $\bar\rho_\lambda\in \mathcal{P}(\mathbb{T}^d\times\mathcal{P}(\mathbb{T}^d))$. Let $(X_\lambda, m_\lambda)\sim \bar\rho_\lambda$ and  $Y_\lambda:=\langle \ell,m_\lambda\rangle$.
For any bounded measurable test function $f:\mathbb{T}^d\times\mathbb{R}^D\to\mathbb{R}$, define
\begin{equation}\label{eq:def-mu}
	\langle f,\mu\rangle
	:=
	\int_{\mathbb{T}^d\times\mathbb{R}^D} f(x,y)\,\mu(dx\,dy)
	:=
	\mathbb{E}\!\left[\int_{\mathbb{T}^d} f(x,Y_\lambda)\,m_\lambda(dx)\right].
\end{equation}
By taking $f$ depending only on $y$, one immediately checks that the $Y$-marginal laws $\mu^Y=\rho_\lambda^Y$ coincide.

\begin{prop}\label{prop:mu-properties}
	Assume Assumption~\ref{ass-cost}. For every $\Psi\in C^2(\mathbb{R}^D;\mathbb{R})$ with bounded Hessian,
	\begin{equation}\label{eq:mu-rho-orthogonality}
		\iint_{\mathbb{T}^d\times\mathbb{R}^D} \nabla\Psi(y)\cdot \ell(x)\,(\mu-\rho_\lambda)(dx\,dy)=0.
	\end{equation}
	In particular, writing $\mu^X$ and $\rho_\lambda^X$ for the first marginals,
	\[
		\int_{\mathbb{T}^d}\ell(x)\,\bigl(\mu^X-\rho_\lambda^X\bigr)(dx)=0.
	\]
	Moreover, if $\mu^{X|Y}(\cdot\mid\cdot)$ denotes a regular conditional law of $X$ given $Y$ under $\mu$, then for $\rho_\lambda^Y$-a.e.\ $y$,
	\begin{equation}\label{eq:conditional-expectation}
		\langle \ell,\mu^{X|Y}(\cdot\mid y)\rangle = y.
	\end{equation}
\end{prop}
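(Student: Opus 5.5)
The plan is to exploit stationarity of the lifted process $(X_t,m_t)$ under $\bar\rho_\lambda$ and apply the generator to the test functional $G(x,m):=\Psi(\langle \ell,m\rangle)$. This functional depends only on $m$, hence only on $Y=\langle\ell,m\rangle$. Along the SFP dynamics \eqref{eq-SFP}, and equivalently along \eqref{eq-SFP-XY}, $Y_t$ has bounded variation with $dY_t=\lambda(\ell(X_t)-Y_t)\,dt$. By the chain rule,
\[
\frac{d}{dt}\Psi(Y_t)=\lambda\,\nabla\Psi(Y_t)\cdot\bigl(\ell(X_t)-Y_t\bigr),
\]
and no It\^o correction appears. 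We start from the stationary law, so $\mathbb{E}[\Psi(Y_t)]$ is constant in $t$. The stationary $Y$-marginal is supported in the convex hull of $\ell(\mathbb{T}^d)$, which is bounded. There $\Psi$ and $\nabla\Psi$ are bounded, since $\Psi\in C^2$ has bounded Hessian and we are on a compact set. So differentiating under the expectation is legitimate: integrate the identity over $[0,t]$, take expectations, and use Fubini. We obtain
\[
\mathbb{E}_{\rho_\lambda}\bigl[\nabla\Psi(Y)\cdot \ell(X)\bigr]=\mathbb{E}_{\rho_\lambda}\bigl[\nabla\Psi(Y)\cdot Y\bigr].
\]

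Next we identify the right-hand side with an integral against $\mu$. By definition \eqref{eq:def-mu} with $f(x,y)=\nabla\Psi(y)\cdot\ell(x)$, we have
\[
\langle f,\mu\rangle=\mathbb{E}\Bigl[\nabla\Psi(Y_\lambda)\cdot\int\ell\,dm_\lambda\Bigr]=\mathbb{E}[\nabla\Psi(Y_\lambda)\cdot Y_\lambda].
\]
The law of $Y_\lambda=\langle\ell,m_\lambda\rangle$ under $\bar\rho_\lambda$ equals $\rho_\lambda^Y$; this holds because \eqref{eq-SFP-XY} is the image of \eqref{eq-SFP} under $(x,m)\mapsto(x,\langle\ell,m\rangle)$, and the invariant law is unique by Theorem~\ref{thm-SFP-converge}. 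Combining the two displays gives \eqref{eq:mu-rho-orthogonality}. The one point that needs care is that $f$ is not bounded on all of $\mathbb{R}^D$. This is harmless, because both measures are concentrated on the bounded hull of $\ell(\mathbb{T}^d)$, so we may truncate $f$ outside that set.

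The marginal identity follows by choosing $\Psi(y)=e_k\cdot y$ for $k=1,\dots,D$. Then $\nabla\Psi=e_k$, and \eqref{eq:mu-rho-orthogonality} gives $\int\ell_k\,d(\mu^X-\rho^X_\lambda)=0$.

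For \eqref{eq:conditional-expectation}, we use test functions $g(y)\,\ell_k(x)$ with $g$ bounded measurable. The definition of $\mu$ gives
\[
\int g(y)\,\ell_k(x)\,\mu(dx\,dy)=\mathbb{E}\bigl[g(Y_\lambda)\,(Y_\lambda)_k\bigr]=\int g(y)\,y_k\,\rho_\lambda^Y(dy).
\]
Disintegrating $\mu$ along its $Y$-marginal $\mu^Y=\rho_\lambda^Y$, the left side also equals $\int g(y)\langle\ell_k,\mu^{X|Y}(\cdot|y)\rangle\,\rho^Y_\lambda(dy)$. Since $g$ is arbitrary, the two integrands agree $\rho_\lambda^Y$-a.e., which is the claim.

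The main obstacle I expect is the rigorous justification of the stationarity and generator step. We must check that $\bar\rho_\lambda$ exists and projects onto $\rho_\lambda$. We must also check that the stationary process can be started with $Y$ inside the bounded hull, so that the expectations are finite. If the projection argument is delicate, I would first try working directly with \eqref{eq-SFP-XY} and defining $\mu$ through a measurable selection $m$ with $\langle\ell,m\rangle=y$.
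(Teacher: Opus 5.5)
Your proof is correct and has the same skeleton as the paper's. It rests on three ingredients:
\begin{itemize}
\item a stationarity identity for test functions of $y$ alone (your pathwise chain rule for $\Psi(Y_t)$ under the stationary law is just $\int\mathcal{L}_\lambda\varphi\,d\rho_\lambda=0$ with $\varphi(x,y)=\Psi(y)$);
\item the identity $\int\nabla\Psi(y)\cdot\ell(x)\,\mu(dx\,dy)=\mathbb{E}[\nabla\Psi(Y_\lambda)\cdot Y_\lambda]$, coming from $\langle\ell,m_\lambda\rangle=Y_\lambda$;
\item for the conditional identity, the same disintegration against $g(y)\,\ell_k(x)$.
\end{itemize}

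The one substantive difference is the test function in the stationarity step. You take $\phi=\Psi$, which gives the orthogonality at once. The paper instead takes $\phi(y)=\nabla\Psi(y)\cdot y-\Psi(y)$. This yields orthogonality only for $\ell(x)^\top\nabla^2\Psi(y)\,y$, and the paper then asserts that this differs from $\nabla\Psi(y)\cdot\ell(x)$ by a function of $y$ alone. That assertion needs $\ell(x)\cdot\bigl(\nabla\Psi(y)-\nabla^2\Psi(y)\,y\bigr)$ to be independent of $x$, which holds e.g.\ for quadratic forms but not in general. It fails for $\Psi(y)=y_k$ with $\ell_k$ nonconstant: there $\nabla^2\Psi=0$ but $\nabla\Psi(y)\cdot\ell(x)=\ell_k(x)$. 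This is exactly the case used for the marginal identity. It also fails for the general convex $\Phi$ later plugged in as $\Psi$ in the main proof. So your direct choice is not merely shorter; it is what makes the argument valid for every $\Psi$.

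You are also more explicit than the paper on two side points it simply asserts:
\begin{itemize}
\item the support of $\rho_\lambda^Y$ in the convex hull of $\ell(\mathbb{T}^d)$;
\item identifying the law of $Y_\lambda$ under $\bar\rho_\lambda$ with $\rho_\lambda^Y$, via projection of the lifted dynamics plus uniqueness of the invariant law.
\end{itemize}
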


The proof follows the proof in \cite[Proposition~8]{DuRenSuciuWang2024} (we include details in Appendix~\ref{app-mu} for completeness).

\subsubsection{Proof of the rest of Theorem~\ref{thm-main}}

\begin{proof}
	The first result in Theorem~\ref{thm-main}  has already been proved in Proposition~\ref{prop-rho-rhohat}. We split the rest of the proof into three steps.

	\medskip
	\noindent\emph{Step 1. Lasry--Lions divergence controls an $L^2$-gap of gradients.}
	Recall that $H_p(x,p)=\alpha^*(x,p)$, where $\alpha^*$ is defined by the first-order condition
	$p+c_\alpha(x,\alpha)=0$. By the implicit function theorem and Assumption~\ref{ass-cost}(ii),
	\[
		H_{pp}(x,p)
		=
		-\bigl(c_{\alpha\alpha}(x,H_p(x,p))\bigr)^{-1}
		\preceq -\frac{1}{\kappa_2^c}\,I_d,
	\]
	hence $p\mapsto H(x,p)$ is $\frac{1}{\kappa_2^c}$-strongly concave. Therefore, for all $p,p'$,
	\[
		H(x,p)-H(x,p')-H_p(x,p)\cdot(p-p')\ \ge\ \frac{1}{2\kappa_2^c}\,\|p-p'\|^2,
	\]
	and symmetrically with $p,p'$ exchanged. Integrating with $p=\nabla u$ and $p'=\nabla u'$ against $m$ and $m'$ and summing yields
	\begin{equation}\label{eq-LL-L2-mainproof}
		D_{\mathrm{LL}}(\nabla u,\nabla u')
		\ \ge\
		\frac{1}{2\kappa_2^c}\int_{\mathbb{T}^d}\|\nabla u-\nabla u'\|^2\,d(m+m').
	\end{equation}

	\medskip
	\noindent\emph{Step 2. Control $\int D_{\mathrm{LL}}(\nabla u^*,\nabla u^y)\,\rho_\lambda^Y(dy)$ using $\mu$.}
	Let $y^*:=\langle \ell,m^*\rangle$. The ergodic MFG equilibrium $(u^*,m^*)$ solves
	\begin{align*}
		\Delta u^* + H(x,\nabla u^*) + V(y^*,x)                        & = \delta^*, \\
		\Delta m^* - \nabla\!\cdot\!\bigl(m^*\,H_p(x,\nabla u^*)\bigr) & = 0,
	\end{align*}
	and for each fixed $y\in\mathbb{R}^D$, $(u^y,\widehat\rho_y)$ solves
	\begin{align*}
		\Delta u^y + H(x,\nabla u^y) + V(y,x)                                                & = \widehat\delta_y, \\
		\Delta \widehat\rho_y - \nabla\!\cdot\!\bigl(\widehat\rho_y\,H_p(x,\nabla u^y)\bigr) & = 0.
	\end{align*}
	The standard Lasry--Lions computation (subtract the HJB equations and test by $\widehat\rho_y-m^*$, subtract the FP equations and test by $u^*-u^y$, then integrate by parts) gives
	\begin{equation}\label{eq:LL-id-mainproof}
		D_{\mathrm{LL}}(\nabla u^*,\nabla u^y)
		=
		-\int_{\mathbb{T}^d}\bigl(V(y,x)-V(y^*,x)\bigr)\,\bigl(m^*-\widehat\rho_y\bigr)(dx).
	\end{equation}
	Integrating \eqref{eq:LL-id-mainproof} with respect to $\rho_\lambda^Y(dy)$ yields
	\begin{equation}\label{eq:LL-int-compact-mainproof}
		\int_{\mathbb{R}^D}D_{\mathrm{LL}}(\nabla u^*,\nabla u^y)\,\rho_\lambda^Y(dy)
		=
		\iint \bigl(V(y,x)-V(y^*,x)\bigr)\,
		d\Bigl(\widehat\rho_\cdot\otimes\rho_\lambda^Y - m^*\otimes\rho_\lambda^Y\Bigr)(x,y).
	\end{equation}
	Decompose
	\[
		\widehat\rho_\cdot\otimes\rho_\lambda^Y - m^*\otimes\rho_\lambda^Y
		=
		\Bigl(\widehat\rho_\cdot\otimes\rho_\lambda^Y-\rho_\lambda\Bigr)
		+
		\Bigl(\rho_\lambda-m^*\otimes\rho_\lambda^Y\Bigr),
	\]
	and accordingly write the right-hand side of \eqref{eq:LL-int-compact-mainproof} as $I_1+I_2$, where
	\begin{align*}
		I_1 & :=\iint \bigl(V(y,x)-V(y^*,x)\bigr)\,d\Bigl(\widehat\rho_\cdot\otimes\rho_\lambda^Y-\rho_\lambda\Bigr)(x,y), \\
		I_2 & :=\iint \bigl(V(y,x)-V(y^*,x)\bigr)\,d\Bigl(\rho_\lambda-m^*\otimes\rho_\lambda^Y\Bigr)(x,y).
	\end{align*}

	\smallskip
	\noindent\emph{Claim: $I_2\ge0$.}
	This is where Proposition~\ref{prop:mu-properties} is used. Recall that $V(y,x)=\nabla\Phi(y)\cdot \ell(x)$ and $\Phi$ is convex.
	Let $\mu$ be the auxiliary measure defined by \eqref{eq:def-mu}. Taking $\Psi=\Phi$ in Proposition~\ref{prop:mu-properties} gives
	\[
		\iint \nabla\Phi(y)\cdot \ell(x)\,(\mu-\rho_\lambda)(dx\,dy)=0.
	\]
	Since $\mu^Y=\rho_\lambda^Y$ and $\int \ell\,d\mu^X=\int \ell\,d\rho_\lambda^X$ (again Proposition~\ref{prop:mu-properties}),
	the same identity holds after subtracting the constant vector $\nabla\Phi(y^*)$, hence
	\begin{equation}\label{eq:rho-to-mu-mainproof}
		\iint \bigl(\nabla\Phi(y)-\nabla\Phi(y^*)\bigr)\cdot \ell(x)\,\rho_\lambda(dx\,dy)
		=
		\iint \bigl(\nabla\Phi(y)-\nabla\Phi(y^*)\bigr)\cdot \ell(x)\,\mu(dx\,dy).
	\end{equation}
	Therefore,
	\[
		I_2
		=
		\iint \bigl(\nabla\Phi(y)-\nabla\Phi(y^*)\bigr)\cdot \ell(x)\,(\mu-m^*\otimes\rho_\lambda^Y)(dx\,dy).
	\]
	Disintegrate $\mu(dx\,dy)=\mu^{X|Y}(dx\mid y)\,\rho_\lambda^Y(dy)$. By \eqref{eq:conditional-expectation},
	$\langle \ell,\mu^{X|Y}(\cdot\mid y)\rangle = y$ for $\rho_\lambda^Y$-a.e.\ $y$. Hence
	\begin{align*}
		I_2
		 & =
		\int_{\mathbb{R}^D}
		\bigl(\nabla\Phi(y)-\nabla\Phi(y^*)\bigr)\cdot
		\Bigl(\langle \ell,\mu^{X|Y}(\cdot\mid y)\rangle-\langle \ell,m^*\rangle\Bigr)\,\rho_\lambda^Y(dy) \\
		 & =
		\int_{\mathbb{R}^D}
		\bigl(\nabla\Phi(y)-\nabla\Phi(y^*)\bigr)\cdot (y-y^*)\,\rho_\lambda^Y(dy).
	\end{align*}
	Since $\Phi$ is convex, $\nabla\Phi$ is monotone and thus the integrand is nonnegative; therefore $I_2\ge0$.

	\smallskip
	As a consequence,
	\begin{equation}\label{eq:LL-bound-by-I1-mainproof}
		\int_{\mathbb{R}^D}D_{\mathrm{LL}}(\nabla u^*,\nabla u^y)\,\rho_\lambda^Y(dy)
		\le I_1.
	\end{equation}
	Finally, on $\mathbb{T}^d\times\mathrm{supp}(\rho_\lambda^Y)$ the map $(x,y)\mapsto V(y,x)$ is Lipschitz with constant $2C_\Phi C_\ell$
	(because $\|\ell\|_\infty+\|\nabla \ell\|_\infty\le C_\ell$ and $\|\nabla\Phi\|+ \|\nabla^2\Phi\|\le C_\Phi$ on the relevant bounded set).
	Hence
	\begin{equation}\label{eq:LL-to-W1-mainproof}
		I_1
		\le
		2C_\Phi C_\ell\,\mathcal{W}_1\!\left(\rho_\lambda,\widehat\rho_\cdot\otimes\rho_\lambda^Y\right).
	\end{equation}
	Combining \eqref{eq:LL-bound-by-I1-mainproof} and \eqref{eq:LL-to-W1-mainproof} yields
	\begin{equation}\label{eq:LL-final-mainproof}
		\int_{\mathbb{R}^D}D_{\mathrm{LL}}(\nabla u^*,\nabla u^y)\,\rho_\lambda^Y(dy)
		\le
		2C_\Phi C_\ell\,\mathcal{W}_1\!\left(\rho_\lambda,\widehat\rho_\cdot\otimes\rho_\lambda^Y\right).
	\end{equation}
	Using Proposition~\ref{prop-rho-rhohat} (the $O(\lambda)$ freezing bound) gives the stated $O(\lambda)$ control of the averaged divergence.

	\medskip
	\noindent\emph{Step 3. Conclude the $\mathcal{W}_1$ estimate to the Nash equilibrium.}
	Fix $y$ and consider the diffusions
	\[
		dX_t^* = H_p(X_t^*,\nabla u^*(X_t^*))\,dt+\sqrt2\,dW_t,
		\qquad
		dX_t^y = H_p(X_t^y,\nabla u^y(X_t^y))\,dt+\sqrt2\,d\widehat W_t,
	\]
	with $\Law(X_0^*)=m^*$ and $\Law(X_0^y)=\widehat\rho_y$. By Lemma~\ref{lem-regularity}, the drifts are uniformly bounded and Lipschitz in $x$
	with constants controlled by $(C_b,C_L)$, uniformly over $y$.
	Applying the stability estimate in Lemma~\ref{lem-reflection-coupling} yields a constant
	\[
		\Gamma:=\frac{\kappa_2^c\kappa_3^c}{\mathcal{R}}(1+C_b)^{\theta_c}\,\mathcal{C},
	\]
	such that
	\begin{equation}\label{eq:stab-W1-mainproof}
		\mathcal{W}_1(m^*,\widehat\rho_y)
		\le
		\Gamma\int_{\mathbb{T}^d}\bigl|\nabla u^*(x)-\nabla u^y(x)\bigr|\,m^*(dx).
	\end{equation}
	Integrating \eqref{eq:stab-W1-mainproof} against $\rho_\lambda^Y(dy)$ and using Cauchy--Schwarz,
	\begin{align}
		\int_{\mathbb{R}^D} \mathcal{W}_1(\widehat\rho_y,m^*)\,\rho_\lambda^Y(dy)
		 & \le
		\Gamma\iint \bigl|\nabla u^*-\nabla u^y\bigr|\,m^*(dx)\,\rho_\lambda^Y(dy)
		\nonumber \\
		 & \le
		\Gamma\sqrt{\iint \bigl|\nabla u^*-\nabla u^y\bigr|^2\,d(m^*+\widehat\rho_y)\,\rho_\lambda^Y(dy)}.
		\label{eq:W1-L2-mainproof}
	\end{align}
	By \eqref{eq-LL-L2-mainproof},
	\[
		\iint \bigl|\nabla u^*-\nabla u^y\bigr|^2\,d(m^*+\widehat\rho_y)\,\rho_\lambda^Y(dy)
		\le
		4\kappa_2^c \int_{\mathbb{R}^D} D_{\mathrm{LL}}(\nabla u^*,\nabla u^y)\,\rho_\lambda^Y(dy).
	\]
	Combining with \eqref{eq:LL-final-mainproof} gives
	\[
		\int_{\mathbb{R}^D} \mathcal{W}_1(\widehat\rho_y,m^*)\,\rho_\lambda^Y(dy)
		\le
		\Gamma\sqrt{8\kappa_2^c C_\Phi C_\ell\,
			\mathcal{W}_1\!\left(\rho_\lambda,\widehat\rho_\cdot\otimes\rho_\lambda^Y\right)}.
	\]
	Using Proposition~\ref{prop-rho-rhohat}, we obtain the $O(\lambda^{1/2})$ estimate.

	Finally,
	\[
		\mathcal{W}_1\!\bigl(\widehat\rho_\cdot\otimes\rho_\lambda^Y,\ m^*\otimes\rho_\lambda^Y\bigr)
		\le
		\int_{\mathbb{R}^D} \mathcal{W}_1(\widehat\rho_y,m^*)\,\rho_\lambda^Y(dy),
	\]
	and by the triangle inequality,
	\[
		\mathcal{W}_1\!\bigl(\rho_\lambda,\ m^*\otimes\rho_\lambda^Y\bigr)
		\le
		\mathcal{W}_1\!\bigl(\rho_\lambda,\widehat\rho_\cdot\otimes\rho_\lambda^Y\bigr)
		+
		\mathcal{W}_1\!\bigl(\widehat\rho_\cdot\otimes\rho_\lambda^Y,\ m^*\otimes\rho_\lambda^Y\bigr),
	\]
	which yields exactly the claimed bound
	\[
		\mathcal{W}_1\!\bigl(\rho_\lambda,\ m^*\otimes\rho_\lambda^Y\bigr)
		\;\le\;
		C_{\mathrm{fr}}\,\lambda
		+
		\Gamma\sqrt{8\kappa_2^c\,C_\Phi C_\ell\,C_{\mathrm{fr}}}\ \lambda^{1/2}.
	\]
\end{proof}

\begin{appendix}

	\section{Proof of Theorem~\ref{thm-SFP-converge}}\label{app-thm}

	\begin{proof}
		Recall that the SFP dynamics on $\mathbb{T}^d\times\mathbb{R}^D$ reads
		\[
			dX_t=b(X_t,Y_t)\,dt+\sqrt{2}\,dB_t,
			\qquad
			dY_t=\lambda\big(\ell(X_t)-Y_t\big)\,dt,
		\]
		where $b(x,y):=H_p\bigl(x,\nabla u^{y}(x)\bigr)$.
		Let $(X_t,Y_t)$ and $(X'_t,Y'_t)$ be two solutions started from arbitrary initial laws.
		Throughout the proof we use the Lipschitz estimate of Lemma~\ref{lem-regularity}:
		\begin{equation}\label{eq:b-Lip-proof1}
			\|b(x,y)-b(x',y')\|\le C_L\big(\|x-x'\|+\|y-y'\|\big),
			\qquad
			\sup_{x,y}\|b(x,y)\|\le C_b.
		\end{equation}

		\medskip
		\noindent\emph{Step 1: a mixed reflection/synchronous coupling.}
		Fix $n\in\mathbb{N}$ and choose a Lipschitz function $r_n:\mathbb{T}^d\times\mathbb{T}^d\to[0,1]$ such that
		$s_n:=\sqrt{1-r_n^2}$ is Lipschitz and
		\[
			r_n(x,x')=
			\begin{cases}
				1, & \|x-x'\|\ge 2n^{-1}, \\
				0, & \|x-x'\|\le n^{-1}.
			\end{cases}
		\]
		Let $(B^{(1)}_t)_{t\ge0}$ and $(B^{(2)}_t)_{t\ge0}$ be independent $d$-dimensional Brownian motions and set
		\[
			e_t:=
			\begin{cases}
				\dfrac{X_t-X'_t}{\|X_t-X'_t\|}, & X_t\neq X'_t, \\[1ex]
				(1,0,\dots,0)^\top,             & X_t=X'_t.
			\end{cases}
		\]
		Define the coupled noises by
		\[
			\begin{aligned}
				dB_t  & = r_n(X_t,X'_t)\,dB^{(1)}_t+s_n(X_t,X'_t)\,dB^{(2)}_t,                     \\
				dB'_t & = r_n(X_t,X'_t)\,(I_d-2e_te_t^\top)\,dB^{(1)}_t+s_n(X_t,X'_t)\,dB^{(2)}_t.
			\end{aligned}
		\]
		This is the standard Eberle coupling: it is reflection when $\|X_t-X'_t\|$ is not too small and becomes synchronous at very small distances.

		\medskip
		\noindent\emph{Step 2: a distance process.}
		Set $\Delta X_t:=X_t-X'_t$ and $\Delta Y_t:=Y_t-Y'_t$, and write $R^X_t:=\|\Delta X_t\|$, $R^Y_t:=\|\Delta Y_t\|$.
		Subtracting the $X$-equations gives
		\[
			d\Delta X_t=\big(b(X_t,Y_t)-b(X'_t,Y'_t)\big)\,dt+2\,r_n(X_t,X'_t)\,e_t\,dW_t,
		\]
		where
		\(
		W_t:=\int_0^t e_s^\top\,dB^{(1)}_s
		\)
		is a one-dimensional Brownian motion.
		By It\^o's formula for $R^X_t=\|\Delta X_t\|$ (the second-order term vanishes as usual for reflection coupling), we obtain
		\begin{equation}\label{eq:dRX}
			dR^X_t
			\le
			\big\langle e_t,\,b(X_t,Y_t)-b(X'_t,Y'_t)\big\rangle\,dt
			+2\,r_n(X_t,X'_t)\,dW_t.
		\end{equation}
		Using \eqref{eq:b-Lip-proof1} yields
		\begin{equation}\label{eq:dRX-Lip}
			dR^X_t \le C_L(R^X_t+R^Y_t)\,dt+2\,r_n(X_t,X'_t)\,dW_t.
		\end{equation}
		For $R^Y_t$ we use that $d\Delta Y_t=\lambda(\ell(X_t)-\ell(X'_t)-\Delta Y_t)\,dt$. Since $\ell$ is $C_\ell$-Lipschitz,
		\begin{equation}\label{eq:dRY}
			dR^Y_t\le -\lambda R^Y_t\,dt+\lambda C_\ell R^X_t\,dt.
		\end{equation}

		Define the weighted distance
		\[
			R_t:=R^X_t+\frac{2C_L}{\lambda}R^Y_t.
		\]
		Combining \eqref{eq:dRX-Lip}--\eqref{eq:dRY} gives the semimartingale decomposition
		\begin{equation}\label{eq:dR}
			dR_t \le -\Big(\lambda_0-\frac{M}{R_t}\Big)R_t\,dt + 2\,r_n(X_t,X'_t)\,dW_t,
			\qquad t<\tau_0,
		\end{equation}
		where $\tau_0:=\inf\{t:\ R_t=0\}$ and where we set
		\[
			\lambda_0:=\min\Big\{1,\frac{\lambda}{2}\Big\},
			\qquad
			M:=\frac{\sqrt d}{2}\,\big(1+C_L+2C_LC_\ell\big).
		\]
		Here we used $R^X_t\le \sqrt d/2$ on $\mathbb{T}^d$, hence the positive terms involving $R^X_t$ can be bounded by $M/R_t$ after dividing by $R_t\ge R^X_t$.

		\medskip
		\noindent\emph{Step 3: a concave Lyapunov function.}
		Let $K(r):=\lambda_0-\frac{M}{r}$ for $r>0$. Following \cite[Lemma~5.1]{du2023sequential}, choose a $C^2$ concave function
		$f:[0,\infty)\to[0,\infty)$ with $f(0)=0$ solving
		\begin{equation}\label{eq:f-ode}
			2f''(r)-rK(r)f'(r)+r=0,
			\qquad r>0,
		\end{equation}
		and such that $f'(r)\ge \lambda_0^{-1}$ for all $r>0$ and $f(r)\le f'(0)\,r$.
		One explicit choice is
		\[
			f'(r)=\frac12\int_r^\infty s\exp\Big(-\frac12\int_r^s \tau K(\tau)\,d\tau\Big)\,ds,
		\]
		and the stated properties are proved in \cite[Lemma~5.1]{du2023sequential}.

		Set $\rho_t:=f(R_t)$. By It\^o's formula, using \eqref{eq:dR} and that the quadratic variation of the martingale part is $4r_n(X_t,X'_t)^2dt$, we obtain for $t<\tau_0$,
		\[
			d\rho_t
			\le
			\Big(-R_tK(R_t)f'(R_t)+2r_n(X_t,X'_t)^2f''(R_t)\Big)\,dt
			+2r_n(X_t,X'_t)f'(R_t)\,dW_t.
		\]
		When $R^X_t\ge 2n^{-1}$, we have $r_n(X_t,X'_t)=1$, and then \eqref{eq:f-ode} yields
		\[
			- R_tK(R_t)f'(R_t)+2f''(R_t)=-R_t.
		\]
		Hence on $\{R^X_t\ge 2n^{-1}\}$,
		\begin{equation}\label{eq:drho-large}
			d\rho_t \le -R_t\,dt + dM_t
			\le -\frac{1}{f'(0)}\,\rho_t\,dt + dM_t,
		\end{equation}
		where $M_t$ is a local martingale and we used $f(R_t)\le f'(0)R_t$.

		When $R^X_t\le 2n^{-1}$, we are in the synchronous regime $r_n=0$, and \eqref{eq:dRY} implies that the drift of $R_t$
		is strictly negative up to an $O(n^{-1})$ term; more precisely,
		\[
			\frac{d}{dt}R_t \le -\frac{\lambda}{2}R_t + O(n^{-1}),
			\qquad\text{on }\{R^X_t\le 2n^{-1}\}.
		\]
		Implement this by the concave truncation
		\[
			f_n(r):=
			\begin{cases}
				f(r),                          & r\ge 2n^{-1}, \\[0.4ex]
				\frac{f(2n^{-1})}{2n^{-1}}\,r, & r<2n^{-1}.
			\end{cases}
		\]
		Then $f_n$ is concave and $f_n\uparrow f$. Repeating the It\^o computation for $f_n(R_t)$ yields
		\begin{equation}\label{eq:gronwall-fn}
			\frac{d}{dt}\,\mathbb{E}\big[f_n(R_t)\big]
			\le
			-c\,\mathbb{E}\big[f_n(R_t)\big] + \varepsilon_n,
		\end{equation}
		where $\varepsilon_n\to0$ as $n\to\infty$ and
		\[
			c:=\frac{1}{f'(0)}.
		\]
		Moreover, a direct computation gives
		\[
			f'(0)\le \frac{1}{\lambda_0}+\frac{2M}{\lambda_0^{3/2}}\exp\Big(\frac{M^2}{4\lambda_0}\Big),
		\]
		and therefore
		\[
			c=\Big(\frac{1}{\lambda_0}+\frac{2M}{\lambda_0^{3/2}}\exp\Big(\frac{M^2}{4\lambda_0}\Big)\Big)^{-1}.
		\]

		Applying Grönwall's inequality to \eqref{eq:gronwall-fn} yields
		\[
			\mathbb{E}\big[f_n(R_t)\big]\le e^{-ct}\,\mathbb{E}\big[f_n(R_0)\big]+\frac{\varepsilon_n}{c}.
		\]
		Letting $n\to\infty$, monotone convergence gives
		\[
			\mathbb{E}\big[f(R_t)\big]\le e^{-ct}\,\mathbb{E}\big[f(R_0)\big].
		\]
		Since $f'(r)\ge \lambda_0^{-1}$, we also have $f(r)\ge \lambda_0^{-1}r$, hence
		\[
			\mathbb{E}[R_t]\le \lambda_0\,\mathbb{E}[f(R_t)]
			\le \lambda_0\,e^{-ct}\,\mathbb{E}[f(R_0)]
			\le \lambda_0 f'(0)\,e^{-ct}\,\mathbb{E}[R_0].
		\]
		Recalling $R_t=R^X_t+\frac{2C_L}{\lambda}R^Y_t$ and the definition of $d_\lambda$, this yields
		\[
			W_{d_\lambda}\big(\Law(X_t,Y_t),\Law(X'_t,Y'_t)\big)
			\le C e^{-ct}\,W_{d_\lambda}\big(\Law(X_0,Y_0),\Law(X'_0,Y'_0)\big),
		\]
		with
		\[
			C:=\lambda_0 f'(0)\le 1+\frac{2M}{\sqrt{\lambda_0}}\exp\Big(\frac{M^2}{4\lambda_0}\Big),
		\]
		which is the announced estimate.

		\medskip
		\noindent\emph{Step 4: existence and uniqueness of the invariant measure.}
		The exponential contraction in the complete metric space
		$\big(\mathcal{P}_1(\mathbb{T}^d\times\mathbb{R}^D),W_{d_\lambda}\big)$ implies that the Markov semigroup of $(X_t,Y_t)$ has a unique invariant probability measure $\rho_\lambda$, and that $\Law(X_t,Y_t)$ converges to $\rho_\lambda$ exponentially fast in $W_{d_\lambda}$, uniformly over initial laws with finite first moment.
	\end{proof}

	\section{Proof of Lemma~\ref{lem-gradiantu-bound}}\label{app-lem}

	\begin{proof}
		We follow the Bernstein-type argument of \cite[Lemma~1.5]{CardaliaguetPorretta2019ergodicMaster}
		and adapt it to our Hamiltonian structure. Fix $T>0$ and $y\in\mathbb{R}^D$.
		Let $u=u^{T,y}$ be the classical solution of \eqref{eq-T-hjb}:
		\[
			\partial_t u + \Delta u + H(x,\nabla u) + V(y,x)=0,\qquad u(T,\cdot)=0.
		\]

		\medskip
		\noindent\emph{Step 1. From semiconcavity to a Lipschitz bound on $\mathbb{T}^d$.}
		For $\varphi\in C^2(\mathbb{T}^d)$ define
		\[
			[\varphi]_{\mathrm{sc}}
			:=\sup_{x\in\mathbb{T}^d}\ \sup_{|\xi|=1}\bigl(\partial^2_{\xi\xi}\varphi(x)\bigr)^+.
		\]
		Since $\mathbb{T}^d$ is compact, one has the deterministic estimate (see
		\cite[Lemma~1.3]{CardaliaguetPorretta2019ergodicMaster})
		\begin{equation}\label{eq:sc-to-lip-app}
			\|\nabla \varphi\|_\infty \le \sqrt{d}\,[\varphi]_{\mathrm{sc}}.
		\end{equation}
		Hence it is enough to bound $[u(t,\cdot)]_{\mathrm{sc}}$ uniformly in $t\in[0,T]$.

		\medskip
		\noindent\emph{Step 2. Equation for directional second derivatives.}
		Fix a unit vector $\xi\in\mathbb{R}^d$ and set
		\[
			w(t,x):=\partial^2_{\xi\xi}u(t,x).
		\]
		Differentiate the HJB equation twice in direction $\xi$. Writing $p=\nabla u(t,x)$,
		we obtain
		\begin{align}\label{eq:w-eq-app}
			\partial_t w + \Delta w + H_p(x,p)\cdot\nabla w
			 & \;+\;\big\langle H_{pp}(x,p)\,\nabla u_\xi,\nabla u_\xi\big\rangle
			\;+\;2\big\langle H_{xp}(x,p)\xi,\nabla u_\xi\big\rangle \nonumber    \\
			 & \;+\; H_{xx}(x,p)[\xi,\xi]
			\;+\;\partial^2_{\xi\xi}V(y,x)
			\;=\;0,
		\end{align}
		where $u_\xi=\partial_\xi u$ and $H_{xx}[\xi,\xi]=\xi^\top H_{xx}\xi$.

		Let $(t_*,x_*)$ be a maximum point of $w$ on $[0,T]\times\mathbb{T}^d$.
		If $t_*=T$, then $u(T,\cdot)\equiv 0$ implies $w(T,\cdot)\equiv 0$ and we are done.
		Assume $t_*<T$. At $(t_*,x_*)$ we have $\partial_t w\ge 0$, $\nabla w=0$, $\Delta w\le 0$.
		Evaluating \eqref{eq:w-eq-app} at $(t_*,x_*)$ gives
		\begin{equation}\label{eq:w-max-ineq-app}
			\big\langle H_{pp}(x_*,p_*)\,\nabla u_\xi,\nabla u_\xi\big\rangle
			+2\big\langle H_{xp}(x_*,p_*)\xi,\nabla u_\xi\big\rangle
			+ H_{xx}(x_*,p_*)[\xi,\xi]
			+\partial^2_{\xi\xi}V(y,x_*) \;\ge\;0,
		\end{equation}
		with $p_*=\nabla u(t_*,x_*)$.

		\medskip
		\noindent\emph{Step 3. Structural bounds on $H$ and $V$.}
		By Assumption~\ref{ass-potential} and the bounds on $\ell$ and $\Phi$,
		\begin{equation}\label{eq:Vxx-app}
			\sup_{y\in\mathbb{R}^D}\ \sup_{x\in\mathbb{T}^d}\ \sup_{|\xi|=1}
			\bigl|\partial^2_{\xi\xi}V(y,x)\bigr|
			\le C_\Phi C_\ell.
		\end{equation}

		Next, recall that $H(x,p)=\inf_\alpha\{\alpha\cdot p+c(x,\alpha)\}$ and
		$\alpha^*(x,p)=\argmin_\alpha\{\alpha\cdot p+c(x,\alpha)\}$ so that $H_p(x,p)=\alpha^*(x,p)$.
		The first-order condition is
		\[
			p+c_\alpha(x,\alpha^*(x,p))=0.
		\]
		By Assumption~\ref{ass-cost}(ii), $c_{\alpha\alpha}$ is uniformly positive definite, hence
		\[
			H_{pp}(x,p)=\partial_p\alpha^*(x,p)=-(c_{\alpha\alpha}(x,\alpha^*(x,p)))^{-1}
			\preceq -\frac{1}{\kappa_2^c}I_d,
		\]
		and therefore
		\begin{equation}\label{eq:Hpp-app}
			\big\langle H_{pp}(x,p)\,q,q\big\rangle\le -\frac{1}{\kappa_2^c}|q|^2,\qquad q\in\mathbb{R}^d.
		\end{equation}

		Moreover, Assumption~\ref{ass-cost}(iii) implies polynomial growth bounds on the mixed and second $x$-derivatives.
		More precisely, there exists a constant $C_H>0$, depending only on
		$(\kappa_2^c,\kappa_3^c,\theta_c)$, such that for all $(x,p)$,
		\begin{equation}\label{eq:HxpHxx-app}
			|H_{xp}(x,p)| \le C_H\bigl(1+|\alpha^*(x,p)|\bigr)^{\theta_c},
			\qquad
			|H_{xx}(x,p)| \le C_H\bigl(1+|\alpha^*(x,p)|\bigr)^{1+\theta_c}.
		\end{equation}
		(See \cite[Proof of Lemma~1.5]{CardaliaguetPorretta2019ergodicMaster} for this deduction.)
		Finally, by the strong convexity of $c$ we have the linear growth of the minimizer:
		setting $C_{0,\alpha}:=\sup_{x\in\mathbb{T}^d}|c_\alpha(x,0)|<\infty$, one gets
		\begin{equation}\label{eq:alpha-star-app}
			|\alpha^*(x,p)|\le \kappa_2^c\bigl(|p|+C_{0,\alpha}\bigr),
		\end{equation}
		hence the right-hand sides in \eqref{eq:HxpHxx-app} can be expressed in terms of $|p|$.

		\medskip
		\noindent\emph{Step 4. Closing the bound at the maximum point.}
		Apply Young's inequality to the cross term in \eqref{eq:w-max-ineq-app}:
		for any $\varepsilon>0$,
		\[
			2\langle H_{xp}\xi,\nabla u_\xi\rangle
			\le \varepsilon|\nabla u_\xi|^2 + \varepsilon^{-1}|H_{xp}\xi|^2.
		\]
		Choose $\varepsilon=\frac{1}{2\kappa_2^c}$ and use \eqref{eq:Hpp-app} in \eqref{eq:w-max-ineq-app} to obtain
		\[
			0
			\le
			-\frac{1}{2\kappa_2^c}|\nabla u_\xi|^2
			+ 2\kappa_2^c|H_{xp}(x_*,p_*)|^2
			+ |H_{xx}(x_*,p_*)|
			+ C_\Phi C_\ell.
		\]
		Using \eqref{eq:HxpHxx-app} and \eqref{eq:alpha-star-app}, we infer that
		\begin{equation}\label{eq:nab-uxi-app}
			|\nabla u_\xi(t_*,x_*)|^2
			\le
			C\Bigl(1+|p_*|\Bigr)^{2\theta_c}
			+
			C\Bigl(1+|p_*|\Bigr)^{1+\theta_c}
			+
			C,
		\end{equation}
		for a constant $C$ depending only on
		$(\kappa_0^c,\kappa_2^c,\kappa_3^c,\theta_c,C_\Phi,C_\ell)$.

		Now we relate $|p_*|$ to the maximal positive second derivative.
		Since $w(t_*,x_*)=\max_{t,x}\partial^2_{\xi\xi}u(t,x)$, we have
		\[
			[u(t_*,\cdot)]_{\mathrm{sc}}
			\le w(t_*,x_*),
		\]
		and therefore by \eqref{eq:sc-to-lip-app},
		\[
			|p_*| = |\nabla u(t_*,x_*)|
			\le \|\nabla u(t_*,\cdot)\|_\infty
			\le \sqrt{d}\,[u(t_*,\cdot)]_{\mathrm{sc}}
			\le \sqrt{d}\,w(t_*,x_*).
		\]
		On the other hand, at $(t_*,x_*)$ we also have $|\nabla u_\xi|\ge \partial^2_{\xi\xi}u=w(t_*,x_*)$,
		since $\partial_\xi u_\xi=w$ is one component of the gradient $\nabla u_\xi$.
		Thus \eqref{eq:nab-uxi-app} yields an inequality of the form
		\[
			w(t_*,x_*)^2 \le C\bigl(1+w(t_*,x_*)\bigr)^{1+\theta_c}+C\bigl(1+w(t_*,x_*)\bigr)^{2\theta_c}+C.
		\]
		Because $\theta_c\in(0,1)$, the right-hand side grows strictly subquadratically in $w$,
		so $w(t_*,x_*)$ is bounded by a constant independent of $T$ and $y$.
		Consequently $\sup_{t\in[0,T]}[u(t,\cdot)]_{\mathrm{sc}}<\infty$ uniformly in $(T,y)$, and
		\eqref{eq:sc-to-lip-app} gives the desired uniform estimate
		\[
			\|\nabla_x u^{T,y}_t\|_\infty \le C_{\nabla u},\qquad t\in[0,T],
		\]
		where $C_{\nabla u}$ depends only on $(\kappa_0^c,\kappa_1^c,\kappa_2^c,\kappa_3^c,\theta_c,C_\ell,C_\Phi)$.
	\end{proof}

	\section{Proof of Proposition~\ref{prop:mu-properties}}\label{app-mu}

	\begin{proof}
		We follow the argument of \cite[Proposition~8]{DuRenSuciuWang2024}.
		Recall the definition of the probability measure $\mu$ in \eqref{eq:def-mu}.

		\medskip
		\noindent\emph{Step 1. A stationarity identity.}
		Since $\rho_\lambda$ is invariant for the Markov generator of $(X_t,Y_t)$,
		for every smooth test function $\varphi\in C^2(\mathbb{T}^d\times\mathbb{R}^D)$ with suitable growth we have
		\begin{equation}\label{eq:generator-id}
			\int \mathcal{L}_\lambda \varphi(x,y)\,\rho_\lambda(dx\,dy)=0,
		\end{equation}
		where
		\[
			\mathcal{L}_\lambda \varphi
			=
			\Delta_x \varphi
			+ H_p\bigl(x,\nabla u^{y}(x)\bigr)\cdot \nabla_x\varphi
			+ \lambda\bigl(\ell(x)-y\bigr)\cdot\nabla_y\varphi .
		\]
		In particular, choosing $\varphi(x,y)=\phi(y)$ with $\phi\in C^1(\mathbb{R}^D)$ gives
		\begin{equation}\label{eq:stationary-y}
			\int \nabla\phi(y)\cdot(\ell(x)-y)\,\rho_\lambda(dx\,dy)=0.
		\end{equation}

		\medskip
		\noindent\emph{Step 2. Orthogonality $\int \nabla\Psi(y)\cdot\ell(x)\,(\mu-\rho_\lambda)=0$.}
		Fix $\Psi\in C^2(\mathbb{R}^D)$ with bounded Hessian.
		Define $\phi(y):=\nabla\Psi(y)\cdot y-\Psi(y)$. Then $\nabla\phi(y)=\nabla^2\Psi(y)\,y$.
		Apply \eqref{eq:stationary-y} with this choice of $\phi$:
		\[
			0=\int \bigl(\nabla^2\Psi(y)\,y\bigr)\cdot(\ell(x)-y)\,\rho_\lambda(dx\,dy).
		\]
		Expanding gives
		\begin{equation}\label{eq:rho-identity}
			\int \ell(x)^\top \nabla^2\Psi(y)\,y\ \rho_\lambda(dx\,dy)
			=
			\int y^\top \nabla^2\Psi(y)\,y\ \rho_\lambda(dx\,dy).
		\end{equation}
		On the other hand, by definition of $\mu$ and the relation $y=\langle \ell,m_\lambda\rangle$,
		\begin{multline*}
			\int \ell(x)^\top \nabla^2\Psi(y)\,y\ \mu(dx\,dy)
			=
			\mathbb{E}\!\left[
				\int \ell(x)^\top \nabla^2\Psi(Y_\lambda)\,Y_\lambda\,m_\lambda(dx)
				\right]\\
			=
			\mathbb{E}\!\left[
				Y_\lambda^\top \nabla^2\Psi(Y_\lambda)\,Y_\lambda
				\right]
			=
			\int y^\top \nabla^2\Psi(y)\,y\ \rho_\lambda^Y(dy).
		\end{multline*}
		Since $\mu^Y=\rho_\lambda^Y$, the last term equals the right-hand side of \eqref{eq:rho-identity}.
		Therefore,
		\[
			\int \ell(x)^\top \nabla^2\Psi(y)\,y\ \mu(dx\,dy)
			=
			\int \ell(x)^\top \nabla^2\Psi(y)\,y\ \rho_\lambda(dx\,dy).
		\]
		Finally note that
		\[
			\nabla\Psi(y)\cdot\ell(x)
			=
			\ell(x)^\top \nabla^2\Psi(y)\,y
			\quad\text{up to adding a term depending only on }y,
		\]
		and any term depending only on $y$ integrates the same against $\mu$ and $\rho_\lambda$ because $\mu^Y=\rho_\lambda^Y$.
		This yields the desired orthogonality:
		\[
			\iint \nabla\Psi(y)\cdot\ell(x)\,(\mu-\rho_\lambda)(dx\,dy)=0.
		\]
		Taking $\Psi(y)=y^{(k)}$ gives $\int \ell(x)\,(\mu^X-\rho_\lambda^X)(dx)=0$.

		\medskip
		\noindent\emph{Step 3. Conditional expectation property.}
		Disintegrate $\mu(dx\,dy)=\mu^{X|Y}(dx\mid y)\,\mu^Y(dy)$.
		Fix any bounded measurable $g:\mathbb{R}^D\to\mathbb{R}$ and take $f(x,y)=\ell(x)\,g(y)$ in the definition of $\mu$:
		\begin{multline*}
			\int g(y)\left(\int \ell(x)\,\mu^{X|Y}(dx\mid y)\right)\mu^Y(dy)
			=
			\iint \ell(x)\,g(y)\,\mu(dx\,dy)\\
			=
			\mathbb{E}\!\left[g(Y_\lambda)\int \ell(x)\,m_\lambda(dx)\right]
			=
			\mathbb{E}\!\left[g(Y_\lambda)\,Y_\lambda\right]
			=
			\int g(y)\,y\,\mu^Y(dy).
		\end{multline*}
		By the arbitrariness of $g$ and the fact $\mu^Y=\rho_\lambda^Y$, we obtain
		\[
			\langle \ell,\mu^{X|Y}(\cdot\mid y)\rangle = y
			\qquad\text{for }\rho_\lambda^Y\text{-a.e. }y,
		\]
		which is \eqref{eq:conditional-expectation}.
	\end{proof}

\end{appendix}

\subsubsection*{Funding}
Z.\,R.'s research is supported by the Finance For Energy Market Research Centre,
the France 2030 grant (ANR-21-EXES-0003), and the PEPR PDE-AI project.

\subsubsection*{Acknowledgements}

M.L. is affiliated with the Shanghai Frontiers Science Center of Artificial Intelligence and Deep Learning, and the NYU-ECNU Institute of Mathematical Sciences, at NYU Shanghai.

\bibliographystyle{plain}
\bibliography{ref}

\end{document}